\numberwithin{equation}{section}
\newtheorem{thm}{Theorem}[section]
\newtheorem{theorem}[thm]{Theorem}
\newtheorem{lemma}[thm]{Lemma}
\newtheorem{cor}[thm]{Corollary}
\newtheorem{prop}[thm]{Proposition}
\theoremstyle{definition}
\newtheorem{remark}[thm]{Remark}
\newtheorem{defn}[thm]{Definition}
\newtheorem{defn-thm}[thm]{Definition-Theorem}
\DeclarePairedDelimiter{\ceil}{\lceil}{\rceil}
\DeclarePairedDelimiter{\floor}{\lfloor}{\rfloor}
\newcommand{\RN}[1]{%
  \textup{\uppercase\expandafter{\romannumeral#1}}%
}
\def\beq{\begin{eqnarray}}
\def\eeq{\end{eqnarray}}
\newcommand{\nn}{\nonumber}
\newcommand{\Rea}{\operatorname{Re}} % introduced by Julie
\numberwithin{equation}{section}
\newcommand{\cO}{\mathcal{O}}
\newcommand{\Res}{\operatorname{Res}}
\newcommand{\Ima}{\operatorname{Im}}
        \definecolor{pink}{rgb}{1,0,1}
        \definecolor{purple}{rgb}{0.4,0.2,1}
\newcommand{\re}{\operatorname{Re}}
\newcommand{\pa}{\partial}
\newcommand{\eps}{\varepsilon}
\newcommand{\cB}{{\mathcal{B}}}
\newcommand{\N}{\mathbb{N}}
\newcommand{\R}{\mathbb{R}}
\newcommand{\C}{\mathbb{C}}
\newcommand{\Z}{\mathbb{Z}}
\begin{document}

\title{The variation of Barnes and Bessel zeta functions}  %%%

\author[C. L. \@ Aldana]{Clara L. Aldana}
\address{Departamento de Matem\'aticas y Estad\'{\i}stica. Universidad del Norte. Km 5 Via Puerto Colombia.  Area Metropolitana de Barranquilla, Colombia}
\email{claldana@uninorte.edu.co, clara.aldana@posteo.net}

\author[K. \@ Kirsten]{Klaus Kirsten} 
\address{GCAP-CASPER, Department of Mathematics, Baylor University Waco, Texas, 76798, USA and Mathematical Reviews, American Mathematical Society, 416 4th Street, Ann Arbor, MI 48103, USA}
\email{Klaus\_Kirsten@baylor.edu}

\author[J.\@ Rowlett]{Julie Rowlett}
\address{Department of Mathematics, Chalmers University of Technology and the University of Gothenburg, 41296 Gothenburg, Sweden}
\email{julie.rowlett@chalmers.se}

\maketitle

\begin{abstract}
We consider the variation of two fundamental types of zeta functions that arise in the study of both physical and analytical problems in geometric settings involving conical singularities. These are the Barnes zeta functions and the Bessel zeta functions. Although the series used to define them do not converge at zero, using methods of complex analysis we are able to calculate the derivatives of these zeta functions at zero.  These zeta functions depend critically on a certain parameter, and we calculate the variation of these derivatives with respect to the parameter. For integer values of the parameter, we obtain a new expression for the variation of the Barnes zeta function with respect to the parameter in terms of special functions.  For the Bessel zeta functions, we obtain two different expressions for the variation via two independent methods.  Of course, the expressions should be equal, and we verify this by demonstrating several identities for both special and elementary functions.  We encountered these zeta functions while working with determinants of Laplace operators on cones and angular sectors. 
%JR:  revised the abstract.  Can update later on arxiv.  We consider the variation of two types of zeta functions with respect to a parameter that is essential in their definitions. These functions are the Barnes zeta functions and the Bessel zeta functions. Using methods of complex analysis we calculate the derivatives of these zeta functions at zero.  We then calculate the derivative with respect to the aforementioned parameter given in the definitions of these zeta functions.  For integer values of the parameter, we obtain an expression for the variation of the Barnes zeta function with respect to the parameter in terms of special functions.  For the Bessel zeta functions, we obtain two different expressions for the variation via two independent methods.  Of course, the expressions should be equal, and we verify this by demonstrating several identities for both special and elementary functions.  We encountered these zeta functions while working with determinants of Laplace operators on cones and angular sectors. 
\end{abstract}

\section{Introduction}
Zeta functions are ubiquitous in mathematics and physics.  The first zeta function one typically encounters as a student is Riemann's zeta function,
\[ \zeta_R (z) = \sum_{n \geq 1} n^{-z}. \]
Although it was not defined by Riemann, it is named after him because he demonstrated a relationship between its zeros and the distribution of prime numbers \cite{riemann} that turned out to be essential to the first proofs of the prime number theorem by Hadamard \cite{Hadamard1896} and de la Vall\'ee Poussin \cite{dlvp1896} in 1896.  This zeta function was introduced by Euler in the early 1700s who showed, among other things, that it is equal to a product over prime numbers:
\[ \zeta_R(z) = \prod_{p \, \, \mathrm{ prime }} \frac{1}{1-p^{-z}}. \]
This may hint towards a relationship between Riemann's zeta function and the distribution of prime numbers.  More generally, every global field has a zeta function that is intimately related to the distribution of its primes.  The Riemann zeta function is associated to the field of rational numbers.  This motivates the investigation of zeta functions in number theory.

However, zeta functions are also essential objects in physics and geometric analysis.  To see this connection, recall the one-dimensional Helmholtz equation on a bounded interval $[0,\ell]$ with the Dirichlet boundary condition:
\[ f''(x) + \lambda f(x) = 0, \quad f(0) = f(\ell) = 0. \]
This is equivalently the eigenvalue equation for the Laplace operator; the goal is to find all functions $f$ (not identically zero) and numbers $\lambda$ that satisfy the equation.  Solving the Helmholtz equation is an essential step in solving both the heat and wave equations.  Using calculus one can show that
\[ f_n(x) = \sin \left( \frac{n \pi x}{\ell} \right), \quad \lambda_n = \frac{n^2 \pi^2}{\ell^2}, \quad n \in \N_{+}\]
are solutions.  Harnessing techniques from Fourier analysis \cite{folland} and the general spectral theory of self-adjoint operators, one can show that these functions are (up to multiplication by constants) all the eigenfunctions and therefore the full set of eigenvalues is $\{ \lambda_n \}_{n \geq 1}$. 
Since the collection of eigenvalues of an operator is known as its \em spectrum, \em a \em spectral zeta function \em is defined to be
\[\zeta(z) = \sum_{n \geq 1} \lambda_n ^{-z}. \]
For this particular example,
\[\sum_{n \geq 1} \lambda_n ^{-z} = \sum_{n \geq 1} \left( \frac{n^2 \pi^2}{\ell^2} \right)^{-z} = \frac{\ell^{2z}}{\pi^{2z}} \zeta_R(2z).\]
Spectral zeta functions can be used to define the determinant of unbounded operators, like the Laplace operator, $\Delta$, if one can make sense of
\[ \zeta'(0) \implies \det(\Delta) = e^{-\zeta'(0)}. \]
This is known as the \em zeta-regularized determinant.\em \ This determinant was introduced by Ray and Singer in \cite{Ray-Singer} in connection to analytic torsion. Later,  Stephen Hawking showed that it can be used to obtain an energy momentum tensor which is finite even on the horizon of a
black hole \cite{hawking77}. The zeta regularized determinant gives a valuable functional on the set of Riemannian metrics on a manifold whose extremal properties have also been studied in various settings, \cite{OPS1}.  The value of a spectral zeta function at specific points is further used in physics to define quantities like Casimir energy \cite{casimir}.  In that context, formally the sum $\sum_{n\geq 1} \lambda_n^{1/2}$ comes up in energy computations leading to an analysis of $\zeta (z)$ about $z=-1/2$ \cite{kkbook}. Residues and values at various points connect the eigenvalue spectrum with the geometry and topology of the underlying manifold \cites{gilkey95,gilkey04}.  Determinants of Laplace operators are at the core of the relationship between different areas of mathematics, such as geometric analysis, spectral theory, mathematical physics and number theory.

In geometric analysis and dynamical systems, there are even more zeta functions, including the celebrated Selberg zeta function \cite{selberg} as well as Ruelle and more general dynamical zeta functions \cite{ruelle}.  These functions are useful for demonstrating relationships between the Laplace spectrum on a Riemannian manifold and its geo\-metry, especially the lengths of its closed geodesics.  Dynamical zeta functions are often used to study closed orbits in flows, including the geodesic flow.  The proof of the prime number theorem for the distribution of primes based on the complex analysis of the Riemann zeta function helped to inspire the proof of so-called `prime orbit theorems' for the distribution of prime orbits of a dynamical system based on the analysis of its associated dynamical zeta function \cites{rowlett_pjm, rowlett_err, rowlett_aif, rsst}.

All of these different types of zeta functions typically share a few common features, like their defining series converges in a certain half space of the complex plane.  In many cases, the series admits a meromorphic continuation to the entire complex plane.  When this is the case, it is often interesting to determine the value of the zeta function and its derivative at the origin, if this point is not a pole.  This motivates our present study.  We focus on a certain one-parameter family of Barnes type zeta functions \cite{barnes}. The Barnes zeta function
\beq \zeta_{\mathcal B, N} (z, w | a_1, \ldots, a_N) = \sum_{n_1, \ldots, n_N \geq 0} \frac{1}{(w+n_1 a_1 + \ldots + n_N a_N)^z} \nn \eeq
is defined for $w, a_j \in \C$ such that their real parts are positive for all $j$, and for complex $z$  with real part $\re(z) > N$. 
Specializing to the case $N=2$, we define
\beq
\zeta_{\mathcal B} (z; a,b,x) = \sum_{m,n=0}^\infty \left( am+bn+x\right)^{-z}.\nn
\eeq
Here, $a, b, x \in \C$ have positive real parts, and this expression is well-defined for $z \in \C$ with real part larger than $2$.  Barnes developed a comprehensive theory for a new class of special functions, which he called multiple zeta and gamma functions \cite{barnes}.  These zeta functions are related to Selberg zeta functions and the form factor program for integrable field theories \cites{smirnov, lukyanov} and in studies of XXZ model correlation functions \cite{xxz}.
The one-parameter family of Barnes type zeta functions which is the first main protagonist in our study is
\begin{equation} \label{eq:def_zetac} \zeta_{c} (z) = \sum_{n, \ell=1} ^\infty (c \ell + n)^{-z} = c^{-z} \left( \zeta_\cB (z; c^{-1}, 1, 1) - \zeta_R (z) \right), \quad \Rea c > 0. \end{equation}
This zeta function is well-defined for $z \in \C$ with real part larger than $2$. It is this zeta function one encounters in the analysis of the Laplacian on circular sectors.  In our first result, Proposition \ref{prop:zeta_c} we calculate $\zeta_c '(0)$.  We then proceed to investigate the dependence on the parameter $c$.  We show two different methods to compute the derivative with respect to $c$ of $\zeta_c '(0)$.  The first method is to simply differentiate the expression obtained in Proposition \ref{prop:zeta_c} with respect to $c$ and show that this is well-defined.  Interestingly, there is a completely independent method based on geometric and microlocal analysis \cites{AldRow, AldRowE} that further involves the second class of zeta function protagonists in our study, the Bessel zeta functions.  Let $\lambda_{n,0}$ denote the $n^{th}$ positive zero of the Bessel function $J_0$.  Then we define the Bessel zeta function
\beq  \xi_0 (z) := \sum_{n \geq 1} \lambda_{n, 0} ^{-z}.  \label{eq:besselzeta0} \eeq
More generally, for $\Rea c > 0$, let $\lambda_{n, \ell}$ denote the $n^{th}$ positive zero of the Bessel function $J_{c \ell}$ of order $c \ell$.  Then, we define the Bessel zeta function
\beq \xi_c (z) = \sum_{n, \ell = 1} ^\infty \lambda_{n, \ell} ^{-z}. \label{eq:bessel_zeta} \eeq
Studies of such Bessel zeta functions seem to go back to Hawkins \cite{hawk} with several more details provided in
\cite{actben}.  The first main result of this work is the following theorem for the Barnes zeta function.  

\begin{theorem}\label{th:main1} 
Let $\zeta_c(z)$ be defined as in \eqref{eq:def_zetac}.  Then the derivative with respect to $c$ of $\zeta_c'(0)$ is 
\beq \frac{d}{dc} \zeta_c '(0) &=& \int_1 ^\infty \frac 1 t \frac{1}{e^t - 1} \frac{-t e^{jt}}{(e^{jt} -1)^2} dt \nn \\
&+& \int_0 ^1 \frac 1 t \left( \frac{-t e^{jt}}{(e^t -1)(e^{jt}-1)^2} + \frac{1}{j^2 t^2} - \frac{1}{2j^2 t} - \frac 1 {12} + \frac{1}{12 j^2} \right) dt \nn \\
&+& \gamma_e \left( \frac{1}{12} - \frac{1}{12j^2} \right). \label{eq:th1_eq1} \eeq 
Here $\psi(x) = \frac{d}{dx} \log \Gamma(x)$ is the so-called di-Gamma function.  If $c=j>1$ is an integer, then this expression is equal to 
\beq \left . \frac{d}{dc} \zeta_c '(0) \right|_{c=j} &=& - \frac{1}{12} - \frac{1}{8j^2} + \frac{1-j^2}{12 j^2} \log j -  \frac{1}{2 j^3}\sum_{p=1} ^j p(j-p) \psi \left( \frac p j \right). \label{eq:th1_eq2}
\eeq
\end{theorem}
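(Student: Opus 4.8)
The plan is to prove the two displayed formulas separately, using the first as the gateway to the second. For \eqref{eq:th1_eq1} I would start from the expression for $\zeta_c'(0)$ furnished by Proposition \ref{prop:zeta_c}, which rests on the Mellin representation $\Gamma(z)\zeta_c(z) = \int_0^\infty t^{z-1}\Theta_c(t)\,dt$ with heat trace $\Theta_c(t) = (e^{ct}-1)^{-1}(e^t-1)^{-1}$, together with the subtraction of its small-$t$ expansion $\Theta_c(t)\sim a_{-2}t^{-2}+a_{-1}t^{-1}+a_0$, where $a_{-2}=c^{-1}$, $a_{-1}=-\tfrac12(1+c^{-1})$ and $a_0=\tfrac14+\tfrac1{12}(c+c^{-1})$ are read off from the Bernoulli expansions of the two factors. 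I would then differentiate in $c$ under the integral sign. The derivative of the kernel is $\partial_c\Theta_c(t) = (e^t-1)^{-1}\cdot(-t e^{ct})(e^{ct}-1)^{-2}$, reproducing the integrands in \eqref{eq:th1_eq1}; the differentiated counterterms $\partial_c a_{-2}=-c^{-2}$, $\partial_c a_{-1}=\tfrac12 c^{-2}$, $\partial_c a_0=\tfrac1{12}(1-c^{-2})$ supply exactly the subtractions $c^{-2}t^{-2}-\tfrac12 c^{-2}t^{-1}-\tfrac1{12}+\tfrac1{12}c^{-2}$ inside the $\int_0^1$ piece and the factor $\tfrac1{12}-\tfrac1{12c^2}$ multiplying $\gamma_e$. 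A small but pleasant point is that the finite boundary contributions $-\tfrac12 a_{-2}-a_{-1}$ have vanishing $c$-derivative, which is why no stray constant survives in \eqref{eq:th1_eq1}. The only analytic content is the justification of differentiation under the integral, which follows from dominated convergence once one checks that the differentiated integrand and its subtractions are uniformly integrable on compact $c$-intervals.

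For \eqref{eq:th1_eq2} I would not evaluate the integrals of \eqref{eq:th1_eq1} directly, but return to the series. Writing $\partial_c\zeta_c(z)=-z\,\Xi_c(z)$ with $\Xi_c(z)=\sum_{n,\ell\ge1}\ell\,(c\ell+n)^{-z-1}$, the symmetry of mixed partials gives $\tfrac{d}{dc}\zeta_c'(0)=-A_0(c)$, where $A_0(c)$ is the coefficient of $z^0$ in the Laurent expansion of $\Xi_c(z)$ at $z=0$; the factor $-z$ converts the simple pole of $\Xi_c$ there into the value of interest. The task thus reduces to analytically continuing $\Xi_j(z)$ to $z=0$ for integer $c=j$ and extracting its constant term.

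The key manoeuvre in the integer case is to replace the offending sum over $\ell$ by finitely many Hurwitz zeta functions $\zeta_H(s,a)=\sum_{k\ge0}(k+a)^{-s}$. Performing the inner sum gives $\Xi_j(z)=\sum_{\ell\ge1}\ell\,\zeta_H(s,j\ell+1)$ with $s=z+1$; in the domain $\re(s)>3$ I would use $\zeta_H(s,j\ell+1)=\zeta_H(s,j\ell)-(j\ell)^{-s}$, apply the multiplication formula $\zeta_H(s,j\ell)=j^{-s}\sum_{p=0}^{j-1}\zeta_H(s,\ell+p/j)$, reorder $\sum_{\ell\ge1}\ell\sum_{m\ge\ell}$ into $\sum_{m\ge1}\tfrac12 m(m+1)$, and express the outcome through $\zeta_H(s-2,\cdot)$, $\zeta_H(s-1,\cdot)$, $\zeta_H(s,\cdot)$ at the arguments $1+p/j$, plus a single term $-j^{-s}\zeta_R(s-1)$. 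Only $\zeta_H(s,1+p/j)$ is singular at $s=1$, so its residue produces the pole of $\Xi_j$ while its finite part $-\psi(1+p/j)$ produces the digamma contribution. I would then Laurent expand at $s=1$ using $\zeta_H(0,x)=\tfrac12-x$, $\zeta_H(-1,x)=-\tfrac12 B_2(x)$, $\zeta_R(0)=-\tfrac12$ and $j^{-s}=j^{-1}(1-(s-1)\log j+\cdots)$, and collect the $z^0$ term. The $\log j$ coefficient is forced to equal the residue $\tfrac{1-j^2}{12j^2}$ by the expansion of $j^{-s}$; converting $\psi(1+p/j)=\psi(p/j)+j/p$ together with the elementary identity $\sum_{p=1}^{j-1}p(j-p)=\tfrac16 j(j^2-1)$ collapses the remaining rational sums into $-\tfrac1{12}-\tfrac1{8j^2}$, which assembles into \eqref{eq:th1_eq2}.

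The main obstacle I anticipate is precisely the interchange of the divergent $\ell$-summation with the Laurent expansion in $z$: a naive term-by-term expansion of $\sum_{\ell\ge1}\ell\,\zeta_H(s,j\ell+1)$ fails because $\sum_\ell\ell$ diverges. The multiplication-formula reorganization is what legitimizes the continuation, trading the problematic sum for Hurwitz zeta functions whose behavior at $s=1$ is classical; the reordering step is itself valid in $\re(s)>3$ by absolute convergence and then propagates by uniqueness of analytic continuation. A secondary point demanding care is the bookkeeping of finite parts, where one must retain the $j/p$ produced in passing from $\psi(1+p/j)$ to $\psi(p/j)$, since it feeds back into the rational terms.
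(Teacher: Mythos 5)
Your proposal is correct and follows essentially the same route as the paper: your first part differentiates the subtracted Mellin representation of Proposition \ref{prop:zeta_c} under the integral sign (exactly the paper's Corollary \ref{cor:dc_zetac}, including your observation that $-\tfrac12 b_{-2}-b_{-1}$ is constant in $c$), and your second part reduces the differentiated series to Hurwitz zeta functions via mod-$j$ arithmetic --- your multiplication-formula step is precisely the paper's congruence-class splitting $n=mj+p$ --- followed by the same reordering with $\sum_{\ell=1}^{k}\ell=\tfrac12 k(k+1)$, the same quadratic expansion in shifted powers, and the same Laurent analysis at the pole with the digamma finite part and Bernoulli values. The only deviations are bookkeeping: you carry arguments $1+p/j$ plus an extra $-j^{-s}\zeta_R(s-1)$ term and undo the shift via $\psi(1+x)=\psi(x)+1/x$, whereas the paper indexes $p$ from $1$ to $j$ and avoids those corrections; both versions assemble to the identical rational terms $-\tfrac1{12}-\tfrac1{8j^2}$ and the identical $\log j$ and digamma contributions.
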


\begin{remark}  For certain experts, the first expression for the variation of the Barnes zeta function with respect to the parameter may be expected or known.  However, for a more broad audience of mathematicians and physicists, based on the definition \eqref{eq:def_zetac} as an infinite series, it may not be so immediate that the variation can be expressed as a sum of integrals without any infinite series.  Moreover, to the best of our knowledge, our derivation in \S \ref{ss:barnesj} of the expression in \eqref{eq:th1_eq2} in the case when the parameter is an integer presented here is novel.  
\end{remark}

Our next main result concerns the Bessel zeta function. 

\begin{theorem} \label{th:main2} 
Let $\xi_c(z)$ be defined as in \eqref{eq:bessel_zeta}.  Assume that the parameter $c>1$ is an integer.  Then we have the equality for the derivative with respect to $c$ of the Bessel zeta function $\xi_c '(0)$
\beq \frac{d}{dc} \left(\xi_c '(0)\right) &=& \frac 1 2 \int_1 ^\infty \frac 1 t \frac{1}{e^t - 1} \frac{-t e^{ct}}{(e^{ct} -1)^2} dt \nn\\
& &+ \frac 1 2  \int_0 ^1 \frac 1 t \left( \frac{-t e^{ct}}{(e^t -1)(e^{ct}-1)^2} + \frac{1}{c^2 t^2} - \frac{1}{2c^2 t} - \frac 1 {12} + \frac{1}{12 c^2} \right) dt  \nn\\
& & + \frac 1 2 \gamma_e \left( \frac{1}{12} - \frac{1}{12c^2} \right) - \frac{5}{48c^2} - \frac{1}{24} \log(2) \left( 1 - \frac 1 {c^2} \right) \nn \\
&=&- \frac{\pi}{2 c^2 } \left( \frac{1}{3\pi} + \frac{c^2}{12 \pi} - \frac{\gamma_e}{12\pi} \left( c^2 - 1 \right) - \frac{1}{2\pi} \sum_{k=1} ^{\ceil*{ \frac c 2 -1}} \frac{ \log | \sin(k \pi/c)|}{\sin^2(k \pi/c)} \right). \label{eq:th2_eq1} 
\eeq 
Assume that the parameter $c>1$ is not an integer.  Then we have the equality for the derivative with respect to $c$ of the Bessel zeta function $\xi_c '(0)$
\beq \frac{d}{dc} \left(\xi_c '(0)\right) &=& \frac 1 2 \int_1 ^\infty \frac 1 t \frac{1}{e^t - 1} \frac{-t e^{ct}}{(e^{ct} -1)^2} dt \nn\\
& &+ \frac 1 2  \int_0 ^1 \frac 1 t \left( \frac{-t e^{ct}}{(e^t -1)(e^{ct}-1)^2} + \frac{1}{c^2 t^2} - \frac{1}{2c^2 t} - \frac 1 {12} + \frac{1}{12 c^2} \right) dt  \nn\\
& & + \frac 1 2 \gamma_e \left( \frac{1}{12} - \frac{1}{12c^2} \right) - \frac{5}{48c^2} - \frac{1}{24} \log(2) \left( 1 - \frac 1 {c^2} \right) \nn \\
&=& - \frac{\pi}{2c^2} \left( \frac{1}{3\pi} + \frac{c^2}{12 \pi} +  \sum_{k = \lceil{-\frac c 2 \rceil}, k \neq 0 } ^{\lceil \frac c 2 - 1 \rceil} \frac{-2\gamma_e + \log 2 - \log\left({1-\cos(2k\pi/c)}\right) }{4 \pi (1-\cos(2k\pi/c))} \right) \nn \\ 
& & - \frac{\pi}{2c^2} \left( \frac{2c}{\pi} \sin(\pi c) \int_\R \frac{ - \log 2 + 2 \gamma_e + \log(1+\cosh(s))}{16 \pi (1+\cosh(s)) (\cosh(c s) - \cos(c \pi))} ds \right). \label{eq:th2_eq2} 
\eeq 
\end{theorem}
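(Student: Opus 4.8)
The plan is to compute $\frac{d}{dc}\xi_c'(0)$ by two genuinely independent routes and then reconcile them, matching the two-method structure of the statement. The common anchor is the identification of $\xi_c'(0)$ with a zeta-regularized determinant. By separation of variables on the unit circular sector of opening $\pi/c$ with Dirichlet data, the eigenfunctions are $J_{c\ell}(\sqrt{\lambda}\,r)\sin(c\ell\,\theta)$ with eigenvalues $\lambda=\lambda_{n,\ell}^2=j_{c\ell,n}^2$, and these exhaust the spectrum for $\ell,n\geq 1$; hence the spectral zeta function of the Laplacian is exactly $\xi_c(2z)$, so that $\zeta_\Delta'(0)=2\xi_c'(0)$ and $\xi_c'(0)=-\tfrac12\log\det\Delta$. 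Consequently $\frac{d}{dc}\xi_c'(0)$ is, up to the factor $-\tfrac12$, the variation of the sector determinant in the opening angle, which is precisely the object treated by the geometric and microlocal methods of \cites{AldRow,AldRowE}.

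First I would establish the integral expression (the middle line in each case) through the relation to the Barnes zeta function. Writing $\xi_c(z)=\sum_{\ell\geq 1}\zeta_{c\ell}(z)$ with $\zeta_\nu(z)=\sum_n \lambda_{n,\nu}^{-z}$ the single-order Bessel zeta function, I would use the contour/Mellin representation $\zeta_\nu(z)=\frac{\sin\pi z}{\pi}\int_0^\infty t^{-z}\frac{\partial}{\partial t}\log\bigl(t^{-\nu}I_\nu(t)\bigr)\,dt$ together with the rescaling $t=\nu x$. Inserting the uniform (Debye) asymptotic expansion of $I_\nu(\nu x)$ for large order isolates the large-$\ell$ behaviour; the leading asymptotic terms, once summed over $\ell$ at fixed $z$, assemble into Hurwitz/Barnes zeta functions of argument $c\ell$, which is exactly the mechanism by which $\zeta_c$ enters. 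Carrying out the analytic continuation, extracting $\xi_c'(0)$, and differentiating in $c$ reproduces $\tfrac12\frac{d}{dc}\zeta_c'(0)$ from Theorem \ref{th:main1} together with the explicit constants $-\tfrac{5}{48c^2}-\tfrac{1}{24}\log 2\,(1-c^{-2})$, which come from the finitely many Debye terms that do not coalesce into the Barnes zeta function.

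Next I would obtain the trigonometric expression (the last line in each case) from the explicit residue/cone evaluation of $\frac{d}{dc}\log\det\Delta$. The variation in the opening angle localizes at the cone tip and is controlled by the poles of the relevant angular resolvent: for integer $c$ these poles sit at $k\pi/c$ and produce the finite sum $\sum_k \log|\sin(k\pi/c)|/\sin^2(k\pi/c)$, whereas for non-integer $c$ the same mechanism yields the $\cos(2k\pi/c)$ sum supplemented by the real integral with the $\cosh$ kernel, which captures the contribution of the non-integer order via a contour deformation. Since both routes compute the same quantity, the two displayed expressions must agree; I would then verify this directly by proving the required special- and elementary-function identities, the central one being the evaluation of the Debye-remainder $t$-integrals in terms of the finite trigonometric sums by a residue/Poisson-summation argument.

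The hardest part will be the non-integer case. For integer $c$ the orders $c\ell$ form an arithmetic progression, the Barnes structure is clean, and the angular poles are simple; for non-integer $c$ the Debye remainder no longer sums to a pure Barnes zeta function, and the pole sum must be completed by the hyperbolic integral over $\R$. The main technical obstacle is therefore to control the analytic continuation uniformly in $c$ and to extract precisely the $\int_\R$ term---equivalently, to prove the identity that reconciles the $\cosh$-kernel integral with the integral expression inherited from the Barnes computation.
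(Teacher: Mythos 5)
Your proposal has the same skeleton as the paper's proof: the integral expression is obtained from the reduction $\xi_c'(0)=\frac 1 2\left[\zeta_c'(0)+\frac{5}{24c}-\frac{1}{12}\left(c+\frac 1 c\right)\log 2\right]$ (Proposition \ref{prop:xic}, which the paper imports from the generalized-cone literature \cite{bdk_96} rather than re-deriving via the Debye expansion as you propose), the trigonometric/hyperbolic expression is the sector Polyakov formula of \cite{AldRow} transported by $c=\pi/\alpha$ (Proposition \ref{prop:v2}), and the equality of the two follows because both are rigorous, independent computations of the same quantity. That independence argument is sound and is indeed the paper's backbone, so your plan proves \eqref{eq:th2_eq2} essentially as the paper does. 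For \eqref{eq:th2_eq1}, however, note that the stated right-hand side is not the raw Polyakov output: even after $\sin(\pi c)=0$ kills the integral term, you still need the reformulation identities of Proposition \ref{prop:reformulate_sum_w_alpha} (symmetrization of the $k$-range, $\sum_{k=1}^{c-1}\sin^{-2}(k\pi/c)=(c^2-1)/3$, and the conversion of the $1-\cos$ kernels to $\log|\sin|/\sin^2$ form); your sketch assumes this form falls directly out of the cone analysis.

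Where you genuinely part ways with the paper is in the verification identities, and there your proposed methods are shaky. For integer $c$ the paper never evaluates the $t$-integrals by a ``residue/Poisson-summation argument''; instead it recomputes $\left.\frac{d}{dc}\zeta_c'(0)\right|_{c=j}$ directly from the double series by splitting $n$ into congruence classes mod $j$, producing Hurwitz zeta functions (Proposition \ref{prop:barnes1}), and then converts the resulting digamma sums into log-sine sums via Gauss's digamma theorem (Proposition \ref{prop:psi_magic}). For non-integer $c$, Lemmas \ref{le:residuesnolog} and \ref{le:logmanipulates} run in the opposite direction to what you describe: they turn the $\cosh$-kernel integrals into the $t$-integrals plus finite sums, and the paper emphasizes that this is \emph{not} a naive residue computation --- one integrates a carefully chosen auxiliary function (one ``half'' of the integrand extended to the strip $-2\pi<\operatorname{Im}(s)<0$, and, for the logarithmic kernel, around a branch cut at $s=-i\pi$ via a keyhole contour), not the integrand itself. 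None of this invalidates your route to the theorem, since the independence argument spares you the verification; but the verification is exactly where the paper's work and its advertised new identities live, and your plan leaves that part unproven.
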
 

\begin{remark}  A similar remark is in order here, as some experts may either know or expect the first expression given here for the variation of the Bessel zeta function with respect to the parameter.  The reformulations in the cases in which $c$ is an integer (or not) are however based on identities obtained here of which we are not aware from the literature.  
\end{remark}

\subsection{Organization} 
This work is organized as follows. In \S \ref{s:zetac} we prove Theorem \ref{th:main1}.  To do this, we first calculate the Barnes zeta function's derivative at zero, $\zeta_c '(0)$ as well as its derivative with respect to the parameter $c$, thereby completing the proof of \eqref{eq:th1_eq1} in Theorem \ref{th:main1}.  We then focus on the case in which the parameter $c$ is a natural number, completing the proof of Theorem \ref{th:main1} in \S \ref{ss:barnesj}. In \S \ref{s:bessel_zeta} we prove Theorem \ref{th:main2}.  We begin by showing that the Bessel zeta functions $\xi_0$ and $\xi_c$ are holomorphic in a neighborhood of $0$, and we compute their derivatives at $0$.  We then compute the variation with respect to the parameter $c$, in Proposition \ref{prop:xic}, obtaining the first expressions in \eqref{eq:th2_eq1} and \eqref{eq:th2_eq2}.  There is an alternative way to compute the variation with respect to the parameter $c$ based on geometric and microlocal analysis which we give in Proposition \ref{prop:v2}.  We then complete the proof of Theorem \ref{th:main2} by demonstrating several identities involving both elementary and special functions that may be of independent use or interest. We note that the proofs of \eqref{eq:th2_eq1} and \eqref{eq:th2_eq2} require quite different techniques in these two cases.  We conclude in \S \ref{s:conclude} with further directions and possible applications.

\section{The Barnes zeta function} \label{s:zetac} 
Before we start with the computation of the derivative at zero of the Barnes zeta function and its variation with respect to the parameter $c$, let us recall the definition of the `big-$\mathcal O$' notation.
\begin{defn} \label{def:bigO}  We write that $f(t)$ is $\mathcal O(g(t))$ as $t \to t_0 \in \C$ if there is a constant $C>0$ and $\epsilon > 0$ such that for all $|t-t_0| < \epsilon$,
\[ |f(t)| \leq C g(t).\]
We write that $f(t)$ is $\mathcal O(g(t))$ as $|t| \to \infty$ if there is a constant $M>0$ such that the above inequality holds for all $|t| > M$.
\end{defn}

We further recall the $\Gamma$ function,
\beq \Gamma(z) = \int_0 ^\infty t^{z-1} e^{-t} dt, \quad \Rea (z) > 0. \label{eq:Gamma} \eeq
By the functional equation $\Gamma(z+1) = z \Gamma(z)$, one obtains the meromorphic extension to $z \in \C$ with simple poles at non-positive integer points.  For the sake of clarity we note that $\log$ here is the real natural logarithm, sometimes written $\ln$.

\begin{prop} \label{prop:zeta_c}
Let $\zeta_c(z)$ be defined as in \eqref{eq:def_zetac}.  Define
\beq
b_{-2} = \frac 1 c , \quad b_{-1} = - \frac 1 2 - \frac{1}{2c}, \quad b_0 = \frac 1 4 + \frac{c}{12} + \frac{1}{12 c}. \label{eq:b012}
\eeq
Then,
\beq
\zeta_{c} ' (0) = \int\limits_1^\infty  \frac 1 t \,\,\frac 1 {e^{ct } -1} \,\, \frac 1 {e^t -1}  dt +\int\limits_0^1  \frac 1 t \,\,\left(\frac 1 {e^{ct } -1} \,\, \frac 1 {e^t -1} - \frac{ b_{-2}}{t^2} - \frac{ b_{-1}} t - b_0 \right) dt - \frac 1 2 b_{-2} - b_{-1} + b_0 \gamma_e. \nn
\eeq
\end{prop}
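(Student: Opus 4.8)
The plan is to express $\zeta_c'(0)$ through a Mellin transform, moving the analysis of the derivative at zero to a study of the small-$t$ behavior of the heat-kernel-type integrand $\frac{1}{(e^{ct}-1)(e^t-1)}$. The starting point is the classical integral representation
\[
\zeta_c(z) = \frac{1}{\Gamma(z)} \int_0^\infty t^{z-1} \sum_{n,\ell \geq 1} e^{-(c\ell+n)t} \, dt = \frac{1}{\Gamma(z)} \int_0^\infty t^{z-1} \frac{1}{e^{ct}-1} \frac{1}{e^t -1} \, dt,
\]
valid for $\Rea(z) > 2$, where I have summed the two geometric series. I would then split the integral at $t=1$. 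The piece over $[1,\infty)$ defines an entire function of $z$ since the integrand decays exponentially, so its contribution to $\zeta_c'(0)$ comes only from differentiating the prefactor $1/\Gamma(z)$; using $\frac{1}{\Gamma(z)} = z + \gamma_e z^2 + \mathcal{O}(z^3)$ near $z=0$, that factor behaves like $z$, and the derivative at zero simply picks out the value of the $[1,\infty)$ integral at $z=0$, giving the first term in the stated formula.

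The main work is the integral over $[0,1]$, where the integrand is singular and the Mellin transform only converges for large $\Rea(z)$. Here I would use the Laurent expansion of the integrand at $t=0$. Since $\frac{1}{e^{ct}-1} = \frac{1}{ct}(1 - \frac{ct}{2} + \frac{(ct)^2}{12} - \cdots)$ and similarly for $\frac{1}{e^t-1}$, the product has an expansion
\[
\frac{1}{(e^{ct}-1)(e^t-1)} = \frac{b_{-2}}{t^2} + \frac{b_{-1}}{t} + b_0 + \mathcal{O}(t),
\]
and a direct multiplication of the two series, collecting powers $t^{-2}, t^{-1}, t^0$, reproduces exactly the coefficients $b_{-2}, b_{-1}, b_0$ asserted in \eqref{eq:b012}. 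To obtain a meromorphic continuation, I subtract these three singular terms from the integrand, rendering the remainder integrable against $t^{z-1}$ near $0$ for $\Rea(z) > -1$; the subtracted terms are integrated explicitly as
\[
\int_0^1 t^{z-1} \Bigl( \frac{b_{-2}}{t^2} + \frac{b_{-1}}{t} + b_0 \Bigr) dt = \frac{b_{-2}}{z-2} + \frac{b_{-1}}{z-1} + \frac{b_0}{z}.
\]
This exposes the pole structure of $\zeta_c(z)$ near $z=0$: only the $b_0/z$ term is singular there, and it is precisely this simple pole that cancels the simple zero of $1/\Gamma(z)$.

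Assembling the pieces, I would write $\zeta_c(z) = \frac{1}{\Gamma(z)} \bigl[ I_\infty(z) + R(z) + \frac{b_{-2}}{z-2} + \frac{b_{-1}}{z-1} + \frac{b_0}{z} \bigr]$, where $I_\infty$ is the (entire) tail integral and $R$ is the (holomorphic near $0$) remainder of the $[0,1]$ integral. Multiplying by the expansion $\frac{1}{\Gamma(z)} = z + \gamma_e z^2 + \mathcal{O}(z^3)$, the term $\frac{b_0}{z}$ contributes $b_0 + b_0 \gamma_e z + \cdots$, so differentiating and evaluating at $z=0$ yields the constant $b_0 \gamma_e$ together with $\frac{d}{dz}\big|_0$ acting on the regular factor; meanwhile the $\frac{b_{-2}}{z-2}$ and $\frac{b_{-1}}{z-1}$ terms, being regular at $0$, contribute $-\tfrac12 b_{-2}$ and $-b_{-1}$ after the $z$-prefactor is differentiated. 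Collecting all contributions reproduces the claimed formula exactly. The one step demanding genuine care is bookkeeping the product of the two Taylor expansions at $t=0$ to confirm $b_{-2}, b_{-1}, b_0$ and to verify that the $\mathcal{O}(t)$ remainder is uniform enough to justify the interchange of differentiation and integration near $z=0$; everything else is the standard zeta-regularization machinery.
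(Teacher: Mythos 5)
Your proposal is correct and follows essentially the same route as the paper's proof: the Mellin-transform representation $\zeta_c(z) = \frac{1}{\Gamma(z)}\int_0^\infty t^{z-1}\frac{1}{(e^{ct}-1)(e^t-1)}\,dt$, the split at $t=1$, subtraction of the Laurent terms $b_{-2}/t^2 + b_{-1}/t + b_0$ with explicit integration giving $\frac{b_{-2}}{z-2}+\frac{b_{-1}}{z-1}+\frac{b_0}{z}$, and extraction of $\zeta_c'(0)$ via the expansion $1/\Gamma(z)=z+\gamma_e z^2+\mathcal{O}(z^3)$. Your bookkeeping of the contributions $-\tfrac12 b_{-2}$, $-b_{-1}$, and $b_0\gamma_e$ matches the paper's use of $\frac{d}{dz}\bigl(1/\Gamma(z)\bigr)\big|_{z=0}=1$ and $\frac{d}{dz}\bigl(1/(z\Gamma(z))\bigr)\big|_{z=0}=\gamma_e$ exactly.
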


\begin{proof}
We have
\[ \Gamma(z) \zeta_c(z) = \int_0 ^\infty x^{z-1} e^{-x} dx \sum_{n, \ell \geq 1} (c\ell+n)^{-z}, \quad \Rea(z) > 2. \]
By the absolute convergence we may switch summation and integration and make the substitution $t = \frac{x}{c\ell+n}$ to obtain
\[ \Gamma(z) \zeta_c(z) = \int_0 ^\infty t^{z-1} \sum_{n, \ell \geq 1} e^{-(c\ell+n)t} dt = \int_0 ^\infty t^{z-1} \frac{1}{e^{ct} - 1} \frac{1}{e^t  - 1} dt. \]
The last equality follows from summing the two geometric series.  We therefore obtain
\begin{equation} \label{sec8}
\zeta_{c} (z) = \frac 1 {\Gamma (z)} \int\limits_0^\infty  t^{z-1} \,\, \frac 1 {e^{ct } -1} \,\, \frac 1 {e^t -1} dt.  \end{equation}
From the $t\to 0$ behavior it follows that this representation is valid for $\Rea z>2$. We calculate the Laurent expansion near $t=0$ of the exponential terms
\[ \frac 1 {e^{c t} -1} \,\, \frac 1 {e^t-1} = \frac{b_{-2}} {t^2} + \frac{b_{-1}} t + b_0 + \mathcal O(t), \]
with the coefficients $b_0$, $b_1$, and $b_2$ defined in \eqref{eq:b012}. We therefore rewrite (\ref{sec8}) as
\beq
\zeta_{c} (z) &=& \frac 1 {\Gamma (z) } \int\limits_1 ^\infty t^{z-1} \,\, \frac 1 {e^{ct } -1} \,\, \frac 1 {e^t -1} dt + \frac 1 {\Gamma (z) } \int\limits_0 ^1  t^{z-1} \,\, \left( \frac 1 {e^{ct } -1} \,\, \frac 1 {e^t -1} - \frac{b_{-2}} {t^2} - \frac{b_{-1}} t - b_0\right) dt \nn\\
& &+ \frac 1 {\Gamma (z) } \int\limits_0 ^1  t^{z-1} \,\, \left(\frac{b_{-2}} {t^2} + \frac{b_{-1}} t + b_0\right) dt = \frac 1 {\Gamma (z) } \int\limits_1 ^\infty t^{z-1} \,\, \frac 1 {e^{ct } -1} \,\, \frac 1 {e^t -1} dt \nn\\
& &+ \frac 1 {\Gamma (z) } \int\limits_0 ^1  t^{z-1} \,\, \left(\frac 1 {e^{ct } -1} \,\, \frac 1 {e^t -1} - \frac{b_{-2}} {t^2} - \frac{b_{-1}} t - b_0\right) dt +\frac 1 {\Gamma (z)} \left( \frac{b_{-2}} {z-2} + \frac{b_{-1}} {z-1} + \frac{b_0} z \right).\label{sec9}
\eeq
This is now valid for $\Rea z > -1$, in particular suitable for the computation of $\zeta _{c} ' (0)$.  Since $\Gamma(z)$ has a simple pole at $z=0$ with residue equal to one, we calculate
\[ \left . \frac{d}{dz} \left( \frac{1}{\Gamma(z)} \right) \right|_{z=0} = 1, \quad \left . \frac{d}{dz} \left( \frac{1}{z\Gamma(z)} \right) \right|_{z=0} = \gamma_e. \]
Above, $\gamma_e$ is the Euler-Mascheroni constant.  With this we obtain the proposition's expression for $\zeta_c'(0)$. 
%\beq
%\zeta_{c} ' (0) &=& \int\limits_1^\infty  \frac 1 t \,\,\frac 1 {e^{ct } -1} \,\, \frac 1 {e^t -1}  dt \nn\\
%& &+\int\limits_0^1  \frac 1 t \,\,\left(\frac 1 {e^{ct } -1} \,\, \frac 1 {e^t -1} - \frac{ b_{-2}}{t^2} - \frac{ b_{-1}} t - b_0 \right) dt \nn\\
%& &- \frac 1 2 b_{-2} - b_{-1} + b_0 \gamma_e. \label{sec10}
%\eeq
\end{proof}

As a corollary, we obtain the derivative with respect to the parameter $c$.
\begin{cor} \label{cor:dc_zetac}
Let $\zeta_c(z)$ be defined as in \eqref{eq:def_zetac}.  Then the derivative with respect to $c$ of $\zeta_c'(0)$ is
\beq \frac{d}{dc} \zeta_c '(0) &=& \int_1 ^\infty \frac 1 t \frac{1}{e^t - 1} \frac{-t e^{ct}}{(e^{ct} -1)^2} dt \nn \\ 
&+&  \int_0 ^1 \frac 1 t \left( \frac{-t e^{ct}}{(e^t -1)(e^{ct}-1)^2} + \frac{1}{c^2 t^2} - \frac{1}{2c^2 t} - \frac 1 {12} + \frac{1}{12 c^2} \right) dt + \gamma_e \left( \frac{1}{12} - \frac{1}{12c^2} \right). \label{cor1}
\eeq
\end{cor}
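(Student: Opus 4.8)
The plan is to obtain the corollary simply by differentiating the closed-form expression for $\zeta_c'(0)$ supplied by Proposition \ref{prop:zeta_c} directly in the parameter $c$, treating the two integrals and the three boundary terms separately. First I would record the elementary derivatives of the Laurent coefficients from \eqref{eq:b012}, namely
\[ \frac{d}{dc} b_{-2} = -\frac{1}{c^2}, \quad \frac{d}{dc} b_{-1} = \frac{1}{2c^2}, \quad \frac{d}{dc} b_0 = \frac{1}{12} - \frac{1}{12c^2}, \]
together with the only nontrivial $t$-dependent derivative
\[ \frac{d}{dc}\left( \frac{1}{e^{ct}-1} \right) = \frac{-t\, e^{ct}}{(e^{ct}-1)^2}, \]
which is precisely the factor appearing in every line of \eqref{cor1}.

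The core of the argument is then to justify differentiation under the integral sign in both integrals. For the integral over $[1,\infty)$ this is routine: for $t \geq 1$ and $c$ ranging over a small neighborhood of a fixed $c_0 > 0$, both the integrand and its $c$-derivative are bounded by a fixed multiple of $t\, e^{-t} e^{-c_0 t/2}$, which is integrable, so dominated convergence applies to the difference quotients and the derivative passes inside, yielding the first line of \eqref{cor1}. The delicate point, and the step I expect to be the main obstacle, is the integral over $[0,1]$: there each individual term of the integrand is singular as $t \to 0$, and one must check that after differentiation the combination remains integrable near the origin.

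To handle this I would differentiate the Laurent expansion
\[ \frac{1}{e^{ct}-1}\, \frac{1}{e^t-1} = \frac{b_{-2}}{t^2} + \frac{b_{-1}}{t} + b_0 + \mathcal{O}(t) \]
term by term in $c$. Since $b_{-2}, b_{-1}, b_0$ are the leading Laurent coefficients and depend real-analytically on $c$, with the remainder $\mathcal{O}(t)$ uniform for $c$ near $c_0$, I obtain
\[ \frac{d}{dc}\left( \frac{1}{e^{ct}-1}\, \frac{1}{e^t-1} \right) = \frac{b_{-2}'}{t^2} + \frac{b_{-1}'}{t} + b_0' + \mathcal{O}(t). \]
Subtracting $b_{-2}'/t^2 + b_{-1}'/t + b_0'$ and dividing by $t$ therefore leaves a function that is $\mathcal{O}(1)$ as $t \to 0$, hence integrable on $[0,1]$; this, together with the corresponding uniform bound on the difference quotients, legitimizes passing $d/dc$ inside the second integral. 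Substituting the coefficient derivatives above then reproduces the integrand $\frac{1}{t}\left( \frac{-t\, e^{ct}}{(e^t-1)(e^{ct}-1)^2} + \frac{1}{c^2 t^2} - \frac{1}{2c^2 t} - \frac{1}{12} + \frac{1}{12c^2} \right)$ of \eqref{cor1}.

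Finally I would differentiate the boundary terms $-\tfrac12 b_{-2} - b_{-1} + b_0\, \gamma_e$, obtaining $-\tfrac12\!\left(-\tfrac{1}{c^2}\right) - \tfrac{1}{2c^2} + \gamma_e\!\left(\tfrac{1}{12} - \tfrac{1}{12c^2}\right)$; here the first two contributions cancel exactly, leaving only $\gamma_e\!\left(\tfrac{1}{12} - \tfrac{1}{12c^2}\right)$, which is the last term of \eqref{cor1}. Collecting the three differentiated pieces gives the stated formula, completing the proof.
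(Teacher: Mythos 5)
Your proposal is correct and is essentially identical to the paper's own proof: the paper likewise obtains the corollary by differentiating each term of the expression for $\zeta_c'(0)$ from Proposition \ref{prop:zeta_c} under the integral sign (justified by absolute convergence and dominated convergence), using the same derivatives of $b_{-2}$, $b_{-1}$, $b_0$, with the two non-$\gamma_e$ boundary contributions cancelling. Your added care in verifying the $\mathcal{O}(1)$ behavior of the differentiated integrand near $t=0$ simply fills in details the paper leaves implicit.
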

\begin{proof}
The proof is a consequence of differentiating the terms in $\zeta_c'(0)$ with respect to $c$ which is justified by the absolute convergence of the integrals together with the dominated convergence theorem. We note that
\[ \frac{d}{dc} \left(b_{-2}\right) = \frac {-1}{ c^2} , \quad \frac{d}{dc}\left( b_{-1} \right) =  \frac{1}{2c^2}, \quad \frac{d}{dc} \left( b_0 \right) = \frac{1}{12} - \frac{1}{12 c^2}. \]
\end{proof}

\subsection{The variation of the Barnes zeta function when the parameter is a natural number} \label{ss:barnesj}
To prove the expression for the variation of $\zeta_c '(0)$ with respect to $c$ when $c$ is an integer given in \eqref{eq:th1_eq2} of Theorem \ref{th:main1}, we demonstrate identities involving both elementary and special functions.  For this reason, this method and the identities involving both elementary and special functions obtained along the way may be of independent interest.

\begin{prop} \label{prop:barnes1} The derivative with respect to the parameter $c$, of $\zeta_c '(0)$, evaluated at an integer point $c=j \in \N$ is
\[ \left . \frac{d}{dc} \zeta_c '(0) \right|_{c=j} =  - \frac{1}{12} - \frac{1}{8j^2} + \frac{1-j^2}{12 j^2} \log j -  \frac{1}{2 j^3}\sum_{p=1} ^j p(j-p) \psi \left( \frac p j \right). \]

\end{prop}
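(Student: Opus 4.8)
The plan is to recognize the integral of Corollary~\ref{cor:dc_zetac} at $c=j$ as minus the finite part of a Hurwitz-type zeta function at its pole, and then to evaluate that finite part in closed form. Since $\zeta_c'(0)$ arises by differentiating in $z$ and the integrals in Corollary~\ref{cor:dc_zetac} converge absolutely, the order of the $z$- and $c$-derivatives may be interchanged, so that
\[ \left.\frac{d}{dc}\zeta_c'(0)\right|_{c=j} = \left.\frac{\partial}{\partial z}\right|_{z=0}\left(\left.\frac{d}{dc}\zeta_c(z)\right|_{c=j}\right). \]
Differentiating the defining series \eqref{eq:def_zetac} term-by-term for $\Rea z>2$ gives $\frac{d}{dc}\zeta_c(z)|_{c=j} = -z\,H(z+1)$, where $H(s) := \sum_{n,\ell\geq1}\ell\,(j\ell+n)^{-s}$. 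If $H(s) = \frac{r}{s-1} + g_0 + \mathcal{O}(s-1)$ near $s=1$, then $-z\,H(z+1) = -r - g_0 z + \mathcal{O}(z^2)$, so the sought derivative equals $-g_0$, i.e. minus the finite part of $H$ at $s=1$. Thus the entire problem reduces to locating this finite part.

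To compute it I would reindex the double sum by the value $N=j\ell+n$, giving $H(s) = \sum_{N\geq1} N^{-s}\sum_{\ell=1}^{\lfloor (N-1)/j\rfloor}\ell$. Writing $N = qj+p$ with $q=\lfloor (N-1)/j\rfloor$ and $p\in\{1,\dots,j\}$, and using $\sum_{\ell=1}^{q}\ell = \tfrac12 q(q+1)$, this becomes
\[ H(s) = \frac{j^{-s}}{2}\sum_{p=1}^{j}\sum_{q\geq1} q(q+1)\left(q+\frac{p}{j}\right)^{-s}. \]
Setting $a=p/j$ and using the algebraic identity $q(q+1) = (q+a)^2 + (1-2a)(q+a) + (a^2-a)$, each inner sum splits into a combination of Hurwitz zeta functions $\zeta_H(w,1+a):=\sum_{q\geq0}(q+1+a)^{-w}$,
\[ \sum_{q\geq1} q(q+1)(q+a)^{-s} = \zeta_H(s-2,1+a) + (1-2a)\,\zeta_H(s-1,1+a) + (a^2-a)\,\zeta_H(s,1+a). \]

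Near $s=1$ only the last summand is singular, with $\zeta_H(s,1+a) = \frac{1}{s-1} - \psi(1+a) + \mathcal{O}(s-1)$, whereas $\zeta_H(-1,1+a) = -\tfrac12 B_2(1+a)$ and $\zeta_H(0,1+a) = \tfrac12-(1+a)$ supply the regular values. Hence each inner sum has residue $R_p := a^2-a$ and finite part $C_p := \zeta_H(-1,1+a) + (1-2a)\,\zeta_H(0,1+a) - (a^2-a)\,\psi(1+a)$ at $s=1$. Since the prefactor expands as $j^{-s} = \tfrac1j\bigl(1-(s-1)\log j + \cdots\bigr)$, the finite part of $H$ is $g_0 = \tfrac{1}{2j}\bigl(\sum_{p=1}^{j}C_p - \log j\sum_{p=1}^{j}R_p\bigr)$, and the answer is $-g_0$. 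Now $\sum_{p=1}^{j}R_p = \tfrac{1-j^2}{6j}$, which after the overall sign produces the term $\tfrac{1-j^2}{12 j^2}\log j$. Using $\psi(1+a)=\psi(a)+1/a$ together with $\sum_{p=1}^{j}p=\tfrac{j(j+1)}{2}$ and $\sum_{p=1}^{j}p^2=\tfrac{j(j+1)(2j+1)}{6}$, the elementary parts of $\sum_{p=1}^{j}C_p$ collapse to $\tfrac{2j^2+3}{12j}$ and the di-Gamma part to $\tfrac{1}{j^2}\sum_{p=1}^{j}p(j-p)\psi(p/j)$; multiplying $-\tfrac{1}{2j}\sum_{p=1}^{j}C_p$ then reproduces $-\tfrac{1}{12}-\tfrac{1}{8j^2}$ and $-\tfrac{1}{2j^3}\sum_{p=1}^{j}p(j-p)\psi(p/j)$, giving exactly the claimed expression.

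The main obstacle is the careful bookkeeping of the pole: the $\log j$ term does not come from any finite part of the Hurwitz zeta functions but from the product of the simple pole of the $\zeta_H(s,1+a)$ piece with the linear term of the non-constant prefactor $j^{-s}$, and this must be separated cleanly from the genuine finite parts $C_p$. Equally delicate is justifying the interchange of the two derivatives and the term-by-term differentiation and continuation of the series, and invoking the standard special values $\zeta_H(0,\cdot)$ and $\zeta_H(-1,\cdot)$ together with the constant term $-\psi$ of $\zeta_H$ at $s=1$. Once these are in place, the surviving work is the routine summation of the polynomial-in-$p$ contributions.
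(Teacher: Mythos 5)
Your proposal is correct and follows essentially the same route as the paper: term-by-term differentiation of the series in $c$, reindexing the double sum into Hurwitz zeta functions via the decomposition of $q(q+1)$ in powers of $\left(q+\tfrac{p}{j}\right)$, and extracting the $\log j$ term from the product of the simple pole with the linear term of the prefactor $j^{-s}$. The differences are cosmetic: you index the Hurwitz zetas at $1+p/j$ (hence need $\psi(1+a)=\psi(a)+1/a$) and phrase the evaluation as a finite part at $s=1$, whereas the paper uses $\zeta_H(\cdot\,;p/j)$ directly (the $k=0$ term vanishes since $k(k+1)=0$ there) and differentiates $-s\,H(s+1)$ at $s=0$; both bookkeepings agree and your arithmetic checks out.
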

\begin{proof}
Differentiating the definition of the Barnes zeta function with respect to the parameter $c$ and setting $c=j \in \N$ yields
\[ \left . \frac{d}{dc} \zeta_c(s) \right|_{c=j} =\left.  -s \sum_{n \geq 1} \sum_{\ell \geq 1} \ell (\ell c + n)^{-s-1} \right|_{c=j}= -s \sum_{n \geq 1} \sum_{\ell \geq 1} \ell (\ell j + n)^{-s-1}. \]

We split the sum over $n$ into sums over the congruency classes of $\Z / j \Z$,
\[ \sum_{n \geq 1} \sum_{\ell \geq 1} \ell (j\ell +n)^{-s-1} = \sum_{p=1} ^j \sum_{m \geq 0} \sum_{\ell \geq 1} \ell (j\ell + mj+p)^{-s-1}, \quad n=mj+p. \]
We consider each of these sums separately for $p \in \{1, \ldots, j\}$,
\[\sum_{m \geq 0} \sum_{\ell \geq 1}  \ell (j\ell + mj+p)^{-s-1} = \sum_{m \geq 0} \sum_{\ell \geq 1} \ell j^{-s-1} \left( \ell + m + \frac p j \right)^{-s-1}.\]
We make the substitution,  $k = \ell + m$.  Then $k$ ranges from $1$ to $\infty$, but the sum over $\ell$ is only from $1$ to $k$, so we obtain
\[\sum_{m \geq 0} \sum_{\ell \geq 1} \ell j^{-s-1} \left( \ell + m + \frac p j \right)^{-s-1} =  j^{-s-1} \sum_{k \geq 1} \sum_{\ell=1} ^k \left( k + \frac p j \right)^{-s-1} \ell. \]
Using the well known identity $\sum_{\ell=1} ^k \ell = \frac{k(k+1)}{2}$, we have
\beq   j^{-s-1} \sum_{k \geq 1} \sum_{\ell=1} ^k \left( k + \frac p j \right)^{-s-1} \ell  
&=&  \frac 1 2 j^{-s-1} \sum_{k\geq 1} \left( k + \frac p j \right)^{-s-1} k (k+1) \nn \\
&=& \frac 1 2 j^{-s-1} \sum_{k \geq 1} \left( k + \frac p j\right)^{-s-1} \left( k+ \frac p j - \frac p j \right) \left( k + \frac p j + \frac{j-p}{j} \right) \nn \\
& = & \frac 1 2 j^{-s-1} \sum_{k \geq 1}  \left( k + \frac p j \right)^{-s-1} \left[ \left( k + \frac p j \right)^2 + \left( \frac{j-p}{j} - \frac p j \right) \left( k + \frac p j \right) - \frac{p(j-p)}{j^2} \right].  \nn \eeq
We separate these terms by using the Hurwitz zeta function,
\[ \zeta_H (s;q) := \sum_{k \geq 0} (k+q)^{-s}.\]
We then obtain that
\[ \left . \frac{d}{dc} \zeta_c (s)\right|_{c=j} = - \frac{sj^{-s-1}}{2} \sum_{p=1} ^j \left\{ \zeta_H \left( s-1; \frac p j \right) + \frac{j-2p}{j} \zeta_H \left( s; \frac p j \right) - \frac{p(j-p)}{j^2} \zeta_H \left( s+1; \frac p j \right) \right\} \]
\[ = \RN{1} + \RN{2} + \RN{3}.\]

Now we differentiate each of these terms separately.  For the first two terms, we simply compute the derivative with respect to $s$ and then set $s=0$. The $s$ in the front makes this computation very easy.  The third term in the bracket has a singularity at $s=0$. Thus, there will be a contribution from the residue at $s=0$ to the derivative. Here are the computations term by term:
\[ \left . \frac{d}{ds} \right|_{s=0} \RN{1} = - \frac{1}{2j} \sum_{p=1} ^j \zeta_H \left( -1; \frac p j \right) = - \frac{1}{2j} \sum_{p=1} ^j (-1) \frac{B_2 (p/j)}{2} \]
by \cite[9.531]{gr},  where here and in what follows $B_n$ denotes the $n$th Bernoulli polynomial.  By \cite[9.627]{gr}, $B_2 (x) = x^2 - x + \frac 1 6$, and so we have
\[ \frac{1}{4j} \sum_{p=1} ^j B_2(p/j) = \frac{1}{4j} \sum_{p=1} ^j \left( \frac 1 6 - \frac p j + \frac{p^2}{j^2} \right) = \frac{1}{4j}  \left( \frac j 6 - \frac{j(j+1)}{2j} + \frac{j (j+1)(2j+1)}{6j^2} \right)  = \frac{1}{24j^2}. \]
Next,
\[ \left . \frac{d}{ds} \right|_{s=0} \RN{2} = - \frac{1}{2j} \sum_{p=1} ^j \frac{j-2p}{j} \zeta_H \left( 0; \frac p j\right) = - \frac{1}{2j} \sum_{p=1} ^j \frac{j-2p}{j} (-1) B_1(p/j),\]
by \cite[9.531]{gr}.  By \cite[9.627.1]{gr}, $B_1 (x) = x - \frac 1 2$, and so this is
\[ \frac{1}{2j^2} \sum_{p=1} ^j (j-2p)(p/j-1/2) \] 
\[=  \frac{1}{2j^2} \sum_{p=1} ^j (p - 2p^2/j - j/2 + p) = \frac{1}{2j^2} \left( j(j+1) - \frac{j(j+1)(2j+1)}{3j} - \frac{j^2}{2} \right) = \frac{1}{2j^2} \left( j^2 + j - \frac{2j^2 + 3j+1}{3} - \frac{j^2}{2} \right)\]
\[ = \frac{1}{2j^2} \left( - \frac{j^2}{6} - \frac 1 3 \right) = - \frac{1}{12} - \frac{1}{6j^2}.\]
Since $\RN{3}$ is singular at $s=0$, we calculate the asymptotic behavior as $s\to 0$ of the terms.  The third term is
\[ \frac{s j^{-s-3}}{2} \sum_{p=1} ^j p(j-p) \zeta_H \left( s+1; \frac p j \right). \]
By \cite[9.533.2]{gr} we have that
\[ \zeta_H \left( s+1; \frac p j \right) = \frac 1 s - \psi \left( \frac p j \right) + \cO(s), \quad s \to 0, \quad \psi (x) = \frac{d}{dx} \log \Gamma(x). \]
Since
\[ j^{-s-3} = \frac 1 {j^3} e^{-s \log j} = \frac{1}{j^3} \left( 1 - s \log j + \cO(s^2) \right), \]
we therefore have as $s \to 0$
\[ \frac{s j^{-s-3}}{2} \zeta_H \left( s+1; \frac p j \right) = \frac{s}{2j^3} \left( 1 - s \log j + \cO(s^2) \right) \left( \frac 1 s - \psi \left( \frac p j \right) + \cO(s) \right) \]
\[ = \frac{s}{2j^3} \left( \frac 1 s - \psi \left( \frac p j \right) - \log j + \cO(s) \right). \]
Consequently we compute that
\[\frac{d}{ds} \left . \RN{3} \right|_{s=0} = \frac{1}{2 j^3} \sum_{p=1} ^j p(j-p) \left( - \psi \left( \frac p j \right)  - \log j \right) =  - \frac{1}{2 j^3} \frac{j (j+1)(j-1)}{6} \log j - \frac{1}{2 j^3}\sum_{p=1} ^j p(j-p) \psi \left( \frac p j \right)\]
\[ = \frac{1-j^2}{12 j^2} \log j -   \frac{1}{2 j^3}\sum_{p=1} ^j p(j-p) \psi \left( \frac p j \right).\]
Putting together the three terms we obtain
\[ \left . \frac{d}{dc} \zeta_c '(0) \right|_{c=j} =  \frac{1}{24j^2} - \frac{1}{12} - \frac{1}{6j^2} + \frac{1-j^2}{12 j^2} \log j -  \frac{1}{2 j^3}\sum_{p=1} ^j p(j-p) \psi \left( \frac p j \right) \]
\[ = - \frac{1}{12} - \frac{1}{8j^2} + \frac{1-j^2}{12 j^2} \log j -  \frac{1}{2 j^3}\sum_{p=1} ^j p(j-p) \psi \left( \frac p j \right). \]
\end{proof}

%%%%%%%%%%%%%%%%%%%%%%%%%%%%%%%%%%%%%%%%%%%%%%%%%%%%%%%%%%%%%%%%%%
\section{The Bessel zeta functions} \label{s:bessel_zeta}
We first demonstrate a result for the Bessel zeta function, $\xi_0$ as defined in \eqref{eq:besselzeta0}.

\begin{prop} \label{prop:zero_momentum}
The Bessel zeta function $\xi_0(s)$ in \eqref{eq:besselzeta0} is holomorphic in a neighborhood of $s=0$, and
\[ \xi_0 '(0) = - \frac 1 2 \log(2\pi).\]
\end{prop}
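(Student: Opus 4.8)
The plan is to represent $\xi_0$ by a Mellin--Barnes type integral built from the Hadamard factorization of $J_0$, to continue that integral across $s=0$, and to read the derivative off the large--argument asymptotics of the modified Bessel function $I_0$.

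First I would use that $J_0$ is even, entire of order one, with $J_0(0)=1$, so that its Hadamard product is $J_0(z)=\prod_{n\ge 1}(1-z^2/\lambda_{n,0}^2)$ with no exponential factor. Substituting $z\mapsto iz$ gives $I_0(z)=J_0(iz)=\prod_{n\ge 1}(1+z^2/\lambda_{n,0}^2)$, a function with no zeros on $[0,\infty)$. Taking a logarithmic derivative, interchanging sum and integral (justified by absolute convergence for $1<\Rea(s)<2$), and using the elementary transform
\[
\int_0^\infty z^{-s}\,\frac{d}{dz}\log\!\left(1+\frac{z^2}{\lambda^2}\right)dz=\frac{\pi\,\lambda^{-s}}{\sin(\pi s/2)},\qquad 0<\Rea(s)<2,
\]
which itself follows from $\int_0^\infty z^{1-s}(z^2+\lambda^2)^{-1}\,dz=\frac{\pi}{2}\lambda^{-s}/\sin(\pi s/2)$, I obtain a representation of the form
\[
\xi_0(s)=\frac{\sin(\pi s/2)}{\pi}\int_0^\infty z^{-s}\,\frac{d}{dz}\log I_0(z)\,dz .
\]
The same identity can be produced via the argument principle, writing $\xi_0(s)$ as a contour integral of $z^{-s}(\log J_0)'$ encircling the positive zeros and rotating the contour onto the imaginary axis, where $J_0$ becomes $I_0$; I would adopt whichever route is easier to make rigorous.

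Next I would analytically continue the integral to a neighbourhood of $s=0$ by treating its two ends separately. Near $z=0$ one has $\frac{d}{dz}\log I_0(z)=\mathcal{O}(z)$, so $\int_0^1$ defines a holomorphic function for $\Rea(s)<2$. For the tail I would insert the asymptotics $\log I_0(z)=z-\frac12\log(2\pi z)+\mathcal{O}(1/z)$, whence $\frac{d}{dz}\log I_0(z)=1-\frac{1}{2z}+\mathcal{O}(z^{-2})$; subtracting the explicit leading part gives $\int_1^\infty z^{-s}(1-\frac{1}{2z})\,dz=\frac{1}{s-1}-\frac{1}{2s}$, while the remaining integral converges and is holomorphic for $\Rea(s)>-1$. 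Hence the bracketed integral is meromorphic near $s=0$ with a single simple pole of residue $-\frac12$, and since $\sin(\pi s/2)$ has a simple zero at $s=0$, the product $\xi_0(s)$ is holomorphic there, proving the first assertion.

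Finally I would extract the derivative. Writing the integral as $-\frac{1}{2s}+H(s)$ with $H$ holomorphic at $0$, the pole is annihilated by the sine factor and only fixes the finite value $\xi_0(0)$, while $\xi_0'(0)$ is a definite multiple of the regular value $H(0)$. That value is assembled from three explicit contributions: the $\int_0^1$ integral gives $\log I_0(1)$, the rational term $\frac{1}{s-1}$ gives $-1$, and the subtracted tail telescopes to $[\log I_0(z)-z+\frac12\log z]_1^\infty$. The endpoint data at $z=1$ cancels against the constants $\pm 1$, and the only survivor is the constant generated by the subleading $-\frac12\log(2\pi z)$ term in the asymptotics of $\log I_0$; tracking the normalisations then yields $\xi_0'(0)=-\frac12\log(2\pi)$. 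The step I expect to be the main obstacle is the rigorous justification of this continuation: interchanging the sum over Bessel zeros with the Mellin integral, controlling the growth of $I_0$ uniformly enough to justify the rotation of the contour onto the imaginary axis, and bookkeeping the pole--zero cancellation so that the finite part — and hence the constant $-\frac12\log(2\pi)$ — is computed correctly.
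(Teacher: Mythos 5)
Your strategy is essentially the paper's: represent $\xi_0$ as a sine-prefactored Mellin integral of $\frac{d}{dz}\log I_0$, split the integral at $z=1$, subtract the large-$z$ behaviour $1-\frac{1}{2z}$, observe that the resulting simple pole at $s=0$ is cancelled by the vanishing sine prefactor, and read the derivative off the finite part, which telescopes to the constant produced by the $-\frac12\log(2\pi z)$ term. Your evaluation of that finite part is correct: with your decomposition one finds $H(0)=\log I_0(1)+(-1)+\bigl(-\tfrac12\log(2\pi)-\log I_0(1)+1\bigr)=-\tfrac12\log(2\pi)$.

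However, the step you yourself flagged as the main obstacle, ``tracking the normalisations,'' is exactly where the argument breaks, and it breaks by a factor of $2$. Your representation, which is the correct Mellin transform of the literal definition $\xi_0(s)=\sum_n\lambda_{n,0}^{-s}$ (the identity $\int_0^\infty z^{1-s}(z^2+\lambda^2)^{-1}dz=\tfrac{\pi}{2}\lambda^{-s}/\sin(\pi s/2)$ checks out), carries the prefactor $P(s)=\sin(\pi s/2)/\pi$, for which $P'(0)=\tfrac12$. Expanding $\xi_0(s)=P(s)\bigl(-\tfrac{1}{2s}+H(s)\bigr)$ gives $\xi_0(s)=-\tfrac14+P'(0)H(0)\,s+\mathcal{O}(s^2)$, hence $\xi_0'(0)=\tfrac12H(0)=-\tfrac14\log(2\pi)$, not the asserted $-\tfrac12\log(2\pi)$. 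The paper's proof instead starts from the representation quoted from \cite{kkbook}, namely $\frac{\sin(\pi s)}{\pi}\int_0^\infty z^{-2s}\frac{d}{dz}\log I_0(z)\,dz$, whose prefactor has slope $1$ at $s=0$ and whose subtracted tail produces the pole $-\frac{1}{4s}$; that integral is the Mellin transform of $\sum_n\lambda_{n,0}^{-2s}$ (the spectral zeta function of the radial Dirichlet Laplacian on the unit disk, whose eigenvalues are the \emph{squares} of the Bessel zeros), and with that normalisation the finite part is again $-\tfrac12\log(2\pi)$ while the prefactor slope is $1$, so the stated value emerges. To complete your proof you must either switch to the $z^{-2s}$, $\sin(\pi s)$ representation — i.e.\ adopt the $\lambda^{-2s}$ convention implicit in the paper's proof — or accept that, under the definition \eqref{eq:besselzeta0} read literally, your computation yields $-\tfrac14\log(2\pi)$. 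As written, the final assertion does not follow from the representation you derived.
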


\begin{proof}
By \cite[p. 57]{kkbook},
\beq \xi_0 (s) = \int_\gamma \frac{1}{2\pi i} k^{-2s} \frac{d}{dk} \log J_0 (k) dk = \frac{\sin \pi s}{\pi} \int_0 ^\infty  z^{-2s} \frac{d}{dz} \log I_0 (z) dz, \quad \frac 1 2 < \Rea(s) < 1. \nn \eeq
Here $\gamma$ is a contour that goes around the positive part of the real axis while enclosing all $\lambda_{n,0} > 0$. The precise form of the contour is given in \cite[p. 43]{kkbook} but it is not relevant for our purposes.   Above, $I_0$ is the modified Bessel function of the first type of order zero. The behaviours as $z \to 0$ and $\infty$ of the modified Bessel function are, respectively,
\beq I_0 (z) = 1 + \frac{z^2}{4} + \cO(z^4), \quad  z \to 0, \label{eq:I0_zero} \eeq
\beq  \log I_0 (z) \sim z - \frac 1 2 \log(2\pi z) + \cO(1/z), \quad z \to \infty. \label{eq:I0_infty} \eeq
We split the integral above as 
\[ \xi_0 (s) = \frac{\sin \pi s}{\pi} \left( \int_0 ^1 + \int_1 ^\infty \right) z^{-2s} \frac{d}{dz} \log I_0 (z) dz = \frac{\sin \pi s}{\pi}  \int_0 ^1 z^{-2s} \frac{d}{dz} \log I_0 (z) dz \label{eq:zm_term1} \] 
\[ + \frac{\sin \pi s}{\pi} \int_1 ^\infty  z^{-2s} \frac{d}{dz} \left( \log I_0 (z) - z + \frac 1 2 \log(2\pi z) \right) dz  +   \frac{\sin \pi s}{\pi} \int_1 ^\infty  z^{-2s} \frac{d}{dz} \left( z - \frac 1 2 \log(2\pi z) \right) dz.  \] 
This last integral becomes 
\[  \int_1 ^\infty  z^{-2s}  \left( 1 - \frac{1}{2z} \right) dz = \frac{1}{2s-1} - \frac{1}{4s}, \quad \Rea s > \frac 1 2. \]
For the middle term, we use integration by parts to obtain
\[ \int_1 ^\infty  z^{-2s} \frac{d}{dz} \left( \log I_0 (z) - z + \frac 1 2 \log(2\pi z) \right) dz  \] 
\[ = \left . z^{-2s} \left( \log I_0 (z) - z + \frac 1 2 \log(2\pi z) \right)\right|_1 ^\infty 
+ \int_1 ^\infty 2s z^{2s-1} \left( \log I_0 (z) - z + \frac 1 2 \log(2\pi z) \right) dz. \] 

By \eqref{eq:I0_infty}, this is well-defined for $\Rea(s) > - \frac 1 2$, and it is
\[ - \log I_0 (1) + 1 - \frac 1 2 \log(2\pi) + 2s \int_1 ^\infty z^{2s-1} \left( \log I_0 (z) - z + \frac 1 2 \log(2\pi z) \right) dz. \]
By \eqref{eq:I0_zero}, the first term \eqref{eq:zm_term1} is well-defined for $\Rea(s) < 1$.  We therefore obtain
\[  \xi_0 (s) =  \frac{\sin \pi s}{\pi} \int_0 ^1 z^{-2s} \frac{d}{dz} \log I_0 (z) dz +   \frac{\sin \pi s}{\pi} \left[ - \log I_0 (1) + 1 - \frac 1 2 \log(2\pi) \right] \]
\[+ \frac{2s \sin \pi s}{\pi}  \int_1 ^\infty z^{2s-1} \left( \log I_0 (z) - z + \frac 1 2 \log(2\pi z) \right) dz \nn +  \frac{\sin \pi s}{\pi} \left( \frac{1}{2s-1} - \frac{1}{4s} \right). \]
The first term is well defined for all $s$ with $\Rea(s) < 1$.  We obtained the second and third terms for $\Rea(s) > - \frac 1 2$.  The fourth term is valid for $\frac 1 2 < \Rea(s)$ and admits a meromorphic extension to $-\frac 1 2 < \Rea(s)$ with a simple pole at $s= \frac 1 2$.  This extension is holomorphic in a neighborhood of $s=0$ due to the vanishing of the pre-factor, $\sin(\pi s)$. We therefore compute:
\[ \xi_0 ' (0) = \left . \frac{d}{ds} \frac{ \sin \pi s}{\pi} \right|_{s=0} \left( \log I_0 (1) - \log I_0 (0) - \log I_0 (1) + 1 - \frac 1 2 \log(2\pi) -1 \right)  \overset{I_0(0)=1}=  - \frac 1 2 \log(2\pi). \] 
\end{proof}

We next demonstrate a result for the general Bessel zeta functions $\xi_c(s)$ defined in \eqref{eq:bessel_zeta}.
\begin{prop} \label{prop:xic}
The Bessel zeta function $\xi_c(s)$ is holomorphic in a neighborhood of $s=0$ and satisfies
\beq \xi_c '(0) &=& \frac 1 2 \int\limits_1^\infty  \frac 1 t \,\,\frac 1 {e^{ct } -1} \,\, \frac 1 {e^t -1}  dt + \frac 1 2 \int\limits_0^1  \frac 1 t \,\,\left(\frac 1 {e^{ct } -1} \,\, \frac 1 {e^t -1} - \frac{ b_{-2}}{t^2} - \frac{ b_{-1}} t - b_0 \right) dt \nn\\
& & + \frac 1 2 \left( \frac 1 2 + \gamma_e\left( \frac 1 4 + \frac{c}{12} + \frac{1}{12 c} \right) \right)  + \frac{5}{48c} - \frac{1}{24} \left( c + \frac 1 c \right)\log 2 . \nn \eeq
Moreover, the variation with respect to the parameter $c$ is
\beq \frac{d}{dc} \left(\xi_c '(0)\right) &=& \frac 1 2 \int_1 ^\infty \frac 1 t \frac{1}{e^t - 1} \frac{-t e^{ct}}{(e^{ct} -1)^2} dt + \frac 1 2  \int_0 ^1 \frac 1 t \left( \frac{-t e^{ct}}{(e^t -1)(e^{ct}-1)^2} + \frac{1}{c^2 t^2} - \frac{1}{2c^2 t} - \frac 1 {12} + \frac{1}{12 c^2} \right) dt  \nn\\
& & + \frac 1 2 \gamma_e \left( \frac{1}{12} - \frac{1}{12c^2} \right)  - \frac{5}{48c^2} - \frac{1}{24} \left( 1 - \frac 1 {c^2} \right) \log 2 . \nn \eeq
\end{prop}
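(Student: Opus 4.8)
The plan is to reduce the computation of $\xi_c'(0)$ to that of the Barnes zeta function $\zeta_c'(0)$ from Proposition \ref{prop:zeta_c}, and then to obtain the variation simply by differentiating, exactly as in Corollary \ref{cor:dc_zetac}. First I would decompose the double sum \eqref{eq:bessel_zeta} by the angular index, writing $\xi_c(s) = \sum_{\ell \geq 1} \xi_{c,\ell}(s)$ with $\xi_{c,\ell}(s) = \sum_{n \geq 1} \lambda_{n,c\ell}^{-s}$, the zeta function over the positive zeros of the single Bessel function $J_{c\ell}$. For each fixed $\ell$ I would represent $\xi_{c,\ell}$ by a contour integral via the argument principle, precisely as in the proof of Proposition \ref{prop:zero_momentum}, rotating the contour to the modified Bessel function $I_{c\ell}$. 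The only new feature compared with the order-zero case is that $J_{c\ell}$ vanishes to order $c\ell$ at the origin, so I would work with $z^{-c\ell} I_{c\ell}(z)$, equivalently subtracting the term $\frac{d}{dz}\log z^{c\ell}$, so that the integrand is integrable at $z = 0$.

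The essential new ingredient, absent in the order-zero case, is the summation over $\ell$, for which I would invoke the uniform (Debye) asymptotic expansion of $\log I_\nu(\nu z)$ as the order $\nu = c\ell \to \infty$, uniformly in $z$. I would split $\log I_{c\ell}$ into its leading uniform term, a finite number of subleading Debye terms, and an integrable remainder. After taking the Mellin transform in $z$ and summing over $\ell$, the leading uniform contribution reassembles exactly the Barnes integral representation \eqref{sec8} of Proposition \ref{prop:zeta_c}: the double sum over $(n,\ell)$, organized by the combination $c\ell + n$, reproduces $\tfrac12 \zeta_c'(0)$, where the factor $\tfrac12$ reflects the weight $z^{-2s}$ carried by the contour representation. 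The finitely many subleading Debye terms, together with the boundary contributions coming from the $z \to 0$ and $z \to \infty$ asymptotics of $\log I_{c\ell}$, contribute the explicit elementary constants $\tfrac12\big(\tfrac12 + \gamma_e b_0\big)$, $\tfrac{5}{48c}$ and $-\tfrac{1}{24}\big(c + \tfrac1c\big)\log 2$, where $b_0 = \tfrac14 + \tfrac{c}{12} + \tfrac{1}{12c}$ as in \eqref{eq:b012}. As in Proposition \ref{prop:zero_momentum}, the overall $\sin(\pi s)$ prefactor cancels every pole produced by the analytic continuation, which proves that $\xi_c$ is holomorphic near $s = 0$ and yields the stated formula. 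In other words, the first part of the proposition is equivalent to the clean identity $\xi_c'(0) = \tfrac12 \zeta_c'(0) + \tfrac{5}{48c} - \tfrac{1}{24}\big(c + \tfrac1c\big)\log 2$.

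Given the formula for $\xi_c'(0)$, the variation is immediate, and this is where I would use the earlier results. I would differentiate the expression term by term with respect to $c$. The two integrals and the $\gamma_e$ term are handled exactly as in Corollary \ref{cor:dc_zetac}: differentiation under the integral sign is justified by the absolute convergence of the integrals and the dominated convergence theorem, using $\frac{d}{dc} b_{-2} = -c^{-2}$, $\frac{d}{dc} b_{-1} = \tfrac{1}{2c^2}$ and $\frac{d}{dc} b_0 = \tfrac{1}{12} - \tfrac{1}{12c^2}$. The remaining elementary terms differentiate directly, producing $\tfrac12 \gamma_e\big(\tfrac{1}{12} - \tfrac{1}{12c^2}\big)$, $-\tfrac{5}{48c^2}$ and $-\tfrac{1}{24}\big(1 - \tfrac{1}{c^2}\big)\log 2$. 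Assembling these reproduces the second displayed formula of the proposition.

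The main obstacle I anticipate is the bookkeeping in the second paragraph: controlling the uniform asymptotic expansion of $\log I_{c\ell}(z)$ across both the small- and large-argument regimes simultaneously, justifying the interchange of the sum over $\ell$ with the Mellin integration, and carefully matching the subleading Debye polynomials and boundary terms so that they combine into exactly the constants $\tfrac{5}{48c}$ and $-\tfrac{1}{24}\big(c + \tfrac1c\big)\log 2$. Verifying that the leading uniform term recombines into precisely $\tfrac12 \zeta_c'(0)$, rather than this quantity plus a spurious remainder, is the delicate point on which the whole identification rests.
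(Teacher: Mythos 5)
Your reduction is exactly the paper's: the entire proposition rests on the identity $\xi_c'(0) = \tfrac12\bigl[\zeta_c'(0) + \tfrac{5}{24c} - \tfrac{1}{12}\bigl(c+\tfrac1c\bigr)\log 2\bigr]$ --- your version, with the $\tfrac12$ distributed, is precisely the paper's \eqref{eq:xic_zetac} --- after which the first display follows by substituting Proposition \ref{prop:zeta_c}, and the variation follows by term-by-term differentiation justified by dominated convergence, exactly as in Corollary \ref{cor:dc_zetac} (your verification that $-\tfrac12 b_{-2}-b_{-1}=\tfrac12$ and your differentiated constants all check out). The difference lies in how the key identity is obtained. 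The paper does not prove it: it cites \cite{bdk_96}, where the contour representation, its analytic continuation, and this evaluation are carried out (see also \cite{akr_pre}). You instead sketch a derivation from scratch --- an argument-principle contour for each fixed order $c\ell$, rotation to $I_{c\ell}$ with the order-$c\ell$ zero at the origin removed, and uniform Debye asymptotics in the order to control the sum over $\ell$ --- which is in fact the method of the cited reference, so your route is a reconstruction of the outsourced input rather than a genuinely different proof. Be aware that your sketch stops exactly where the real work lies: you assert, but do not verify, that the leading uniform term reassembles into precisely $\tfrac12\zeta_c'(0)$ and that the subleading Debye and boundary contributions sum to precisely $\tfrac{5}{48c} - \tfrac{1}{24}\bigl(c+\tfrac1c\bigr)\log 2$, and you flag this yourself as the main obstacle. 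Judged against the paper's own proof, which imports this identity wholesale, your proposal is complete in every step the paper actually performs and correct in outline on the one step it does not.
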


\begin{proof}
We have the identity from \cite{bdk_96}
\[ \xi_c(s) = \sum_{ \ell = 1} ^\infty \frac{1}{2\pi i} \int_\gamma k^{-s} \frac{\pa}{\pa k} \log J_{c\ell} (k) dk. \]
The contour $\gamma$ encloses all of the positive zeros $\lambda_{n, \ell}$, and the representation is valid for $\Rea(s) > 1$.  The analytical continuation has been constructed in detail and in greater generality in \cite{bdk_96} which show that is holomorphic in a neighborhood of $s=0$ and (see also \cite[\S 3]{akr_pre})
\beq \xi_c '(0) = \frac 1 2 \left[ \zeta_{c} '(0) + \frac{5}{24c} - \frac{1}{12} \left( c + \frac 1 c \right) \log 2  \right].  \label{eq:xic_zetac} \eeq
Substituting the values from Proposition \ref{prop:zeta_c} and Corollary \ref{cor:dc_zetac} completes the proof.
\end{proof}

The variation of the Bessel zeta function with respect to the parameter $c$ can be obtained as a corollary to \cite[Theorem 4]{AldRow}.\footnote{There is unfortunately a typo in the statement of the theorem \cite[Theorem 4]{AldRow} that has been corrected here as well as in \cite{akr_pre}.  We note that this is mere a transcription error from the contents of the proof.  Further, the boundary contribution was overlooked in \cite{AldRow}, but has been corrected in  \cite{AldRowE}.} We note that the proofs of Proposition \ref{prop:xic} and that of Proposition \eqref{prop:v2} are completely independent. 

\begin{prop} \label{prop:v2} The derivative of the Bessel zeta function $\xi_c'(0)$ with respect to the parameter $c$ is, for $c \in (1, \infty)$,
\[  \frac{d}{dc} \xi_c '(0) = - \frac{\pi}{2c^2} \left( \frac{1}{3\pi} + \frac{c^2}{12 \pi} +  \sum_{k = \lceil{-\frac c 2 \rceil}, k \neq 0 } ^{\lceil \frac c 2 - 1 \rceil} \frac{-2\gamma_e + \log 2 - \log\left({1-\cos(2k\pi/c)}\right) }{4 \pi (1-\cos(2k\pi/c))} \right) \]
\[ - \frac{\pi}{2c^2} \left( \frac{2c}{\pi} \sin(\pi c) \int_\R \frac{ - \log 2 + 2 \gamma_e + \log(1+\cosh(s))}{16 \pi (1+\cosh(s)) (\cosh(c s) - \cos(c \pi))} ds \right). \]
\end{prop}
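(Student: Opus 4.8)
The plan is to obtain this second, geometrically-flavored formula for $\frac{d}{dc}\xi_c'(0)$ as a corollary of the variation formula \cite[Theorem 4]{AldRow}, entirely independently of the series manipulations behind Proposition~\ref{prop:xic}. The starting point is the spectral meaning of $\xi_c$: the positive zeros $\lambda_{n,\ell}$ of $J_{c\ell}$ are, for unit radius, the square roots of the Dirichlet eigenvalues of the Laplacian on the planar circular sector of opening angle $\beta=\pi/c$, since separation of variables on such a sector produces angular modes $\sin(\ell c\,\theta)$ whose radial factors are $J_{\ell c}$. Hence $\xi_c(z)$ is, up to the factor of two relating $\lambda$ to $\lambda^2$, the spectral zeta function of this sector, so $\xi_c'(0)=\tfrac12\zeta_{\mathrm{sec}}'(0)$ and $\xi_c'(0)$ is, up to an explicit elementary term, minus the zeta-regularized log-determinant of the sector Laplacian. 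Varying $c$ amounts to varying the opening angle through $\beta=\pi/c$, with $d\beta/dc=-\pi/c^2$; together with the factor $\tfrac12$ this already accounts for the global prefactor $-\pi/(2c^2)$ in the asserted identity, the parenthetical expression being exactly $\frac{d}{d\beta}\zeta_{\mathrm{sec}}'(0)$.

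First I would record the precise statement of \cite[Theorem 4]{AldRow}, which expresses the variation of the regularized determinant of the cone (equivalently sector) Laplacian with respect to the opening angle via microlocal analysis of the resolvent near the conical tip. Structurally, that formula arises by differentiating the contour representation $\xi_c(s)=\sum_{\ell\ge1}\frac{1}{2\pi i}\int_\gamma k^{-s}\,\pa_k\log J_{c\ell}(k)\,dk$ of \cite{bdk_96} in $c$ and resumming over the angular index $\ell$ by a Plana/Poisson-type summation. This split is the conceptual heart of the matter: it separates the result into a residue sum over the angular data, producing the finite sum over $k$ with the $\cos(2k\pi/c)$ denominators, and a genuinely continuous contribution, producing the integral over $\R$ against $\cosh$. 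The factor $\sin(\pi c)$ multiplying that integral is precisely the monodromy factor relating $J_{c\ell}$ to $J_{-c\ell}$ at non-integer order; it is the mechanism by which the continuous term disappears exactly when $c\in\N$, reconciling the present formula with the integer case \eqref{eq:th2_eq1}.

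Next I would incorporate the two corrections flagged in the footnote. The transcription error in the statement of \cite[Theorem 4]{AldRow} is repaired by comparison with the proof therein and with \cite{akr_pre}, while the boundary contribution omitted in \cite{AldRow} is supplied from \cite{AldRowE}. Including the boundary term is what fixes the constant $\frac{1}{3\pi}$ and pins down the numerators $-2\gamma_e+\log 2$, whereas the smooth local geometry (corner plus area contributions to the heat coefficients) produces the polynomial term $\frac{c^2}{12\pi}$. Assembling the discrete sum, the continuous integral, and the local and boundary constants, and then multiplying by the chain-rule factor $-\pi/(2c^2)$, yields the claimed expression.

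I expect the main obstacle to be the careful bookkeeping of the boundary contribution and of the regularization constants $\gamma_e$ and $\log 2$ through the resummation in $\ell$. The Plana-type summation must be performed uniformly in the Bessel order so that the passage to a convergent residue sum over $k$ together with a convergent $\cosh$-integral is justified, and the spurious singularities at $k=0$ and at the endpoints of the $k$-range must be seen to cancel against the local heat-coefficient terms and the boundary term of \cite{AldRowE}. Once this cancellation is established and the corrected boundary contribution is inserted, the remaining steps reduce to routine bookkeeping and the chain rule.
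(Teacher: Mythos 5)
Your proposal follows essentially the same route as the paper: identify $\xi_c(2s)$ with the Dirichlet spectral zeta function $\zeta_{S_\alpha}(s)$ of the unit sector of opening angle $\alpha = \pi/c$, apply the chain rule in $\alpha$ (producing the prefactor $-\pi/(2c^2)$), and quote the corrected statement of \cite[Theorem 4]{AldRow} together with the boundary correction of \cite{AldRowE}. The paper does not re-derive that theorem, so your concerns about performing a Plana-type resummation uniformly in the Bessel order are not part of the argument (and in any case mischaracterize the proof in \cite{AldRow}, which is via microlocal/heat-kernel analysis rather than resummation of the contour representation); the only additional step the paper needs is to rewrite the index set $W_\alpha$ as the explicit range $\lceil -c/2 \rceil \le k \le \lceil c/2 - 1 \rceil$, $k \neq 0$, via \cite[Proposition 3.2]{akr_pre}.
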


\begin{proof}
The Bessel zeta function evaluated at $2s$, $\xi_c(2s)$ is equal to the spectral zeta function for the Laplace operator with the Dirichlet boundary condition on a circular sector of radius one and opening angle $\frac \pi c$, for $c>1$ real.    Denoting this spectral zeta function by $\zeta_{S_\alpha} (s)$ we therefore have
\[ \xi_c(2s) = \zeta_{S_\alpha} (s), \quad \alpha = \frac \pi c, \quad c = \frac \pi \alpha \implies \zeta_{S_\alpha} '(0) = 2 \xi_c '(0), \]
and
\[ \frac{d}{d\alpha} \left( \zeta_{S_\alpha} '(0) \right) = - \frac{\pi}{\alpha^2} \frac{d}{dc} (2 \xi_c '(0)) = - \frac{c^2}{\pi} \frac{d}{dc} (2 \xi_c '(0)) \]
so
\beq \frac{d}{dc} \xi_c '(0) = - \frac{\pi}{2c^2} \frac{d}{d\alpha} \left( \zeta_{S_\alpha} '(0) \right).  \label{eq:2formulas} \eeq
We therefore recall the statement of \cite[Theorem 4]{AldRow}
\[  \frac{d}{d\alpha} \zeta_{S_\alpha} '(0) = \frac{1}{3\pi} + \frac{\pi}{12 \alpha^2} +  \sum_{k\in W_{\alpha}} \frac{-2\gamma_e + \log 2 - \log\left({1-\cos(2k\alpha)}\right) }{4 \pi (1-\cos(2k\alpha))} \]
\[ + \frac{2}{\alpha} \sin(\pi^2/\alpha) \int_\R \frac{ - \log 2 + 2 \gamma_e + \log(1+\cosh(s))}{16 \pi (1+\cosh(s)) (\cosh(\pi s/\alpha) - \cos(\pi^2/\alpha))} ds. \]
Above,
\[ k_{min} = \ceil*{ \frac{-\pi}{2\alpha} }, \textrm{ and } k_{max} = \floor*{\frac{\pi}{2\alpha}} \textrm{ if } \frac{\pi}{2\alpha} \not\in \Z, \textrm{ otherwise } k_{max} = \frac{\pi}{2\alpha} - 1, \]
and
\[ W_{\alpha} =\left\{  k \in \left( \Z \bigcap \left[k_{min},  k_{max}\right]\right) \setminus \left\{ \frac{\ell\pi}{\alpha} \right\}_{\ell \in \Z} \right\}. \]
A straightforward calculation as done in \cite[Proposition 3.2]{akr_pre} shows that 
the set $W_\alpha$ is precisely the set of integers
\[\Z \supset W_\alpha = \{ j \}_{k_{min}} ^{k_{max}} \setminus \{ 0 \}, \quad  k_{max} = \ceil*{ \frac{\pi}{2\alpha} -1}.\]
Using the relationship $c = \frac \pi \alpha$ together with \eqref{eq:2formulas} completes the proof. 
\end{proof}

\subsection{The variation of the Bessel zeta function when the parameter is a natural number} \label{s:main}
Theorem \ref{th:main2} not only states that the expressions in Propositions \ref{prop:xic} and \ref{prop:v2} for the derivative of $\xi_c'(0)$ with respect to $c$ are equal, it also gives a further simplification in the case when the parameter $c$ is a natural number.  Our proof reveals several identities involving special functions, some of which appear to be new.  The first identities we establish by transforming certain sums of trigonometric functions are however not new; they can be obtained from our study of zeta regularized determinants \cite{akr_pre}. 
%%%%%%%%%%%%%%%%%%%%%%%%%%%%%%%%%%%%%%%%%%%%%%%%%%%%%%%%%%

\begin{prop} \label{prop:reformulate_sum_w_alpha}
For any $\alpha > 0$, for any finite sum over integers that excludes the value $k=0$, we have the equality
\beq \sum_{k} \frac{-2\gamma_e + \log 2  - \log\left({1-\cos(2k\alpha)}\right) }{4 \pi (1-\cos(2k\alpha))} =  -  \sum_{k} \frac{\gamma_e + \log |\sin(k \alpha)|}{4\pi \sin^2 (k \alpha)}. \nn \eeq 
Next, assume that $c>0$ is not an even integer.  Then we have the identity
\[ - \sum_{k = \lceil{-\frac c 2 \rceil}, k \neq 0 } ^{\lceil \frac c 2 - 1 \rceil}  \frac{\gamma_e + \log |\sin(k \pi/c)|}{4\pi \sin^2 (k \pi/c)} = -  \sum_{k=1} ^{\ceil*{\frac c 2-1}}  \frac{\gamma_e + \log |\sin(k \pi/c)|}{2\pi \sin^2 (k \pi/c)}.\]
If $c>0$ is an integer (even or odd), then we have the identities
\beq \sum_{k=1} ^{c-1} \frac{1}{\sin^2 (k \pi/c)} &=& \frac 1 3 \left(c^2-1\right), \nn \\ 
 \sum_{k=1} ^{\ceil*{ \frac c 2 -1}} \frac{ \log |\sin(k \pi/c)|}{2\pi \sin^2 (k \pi/c)} &=& \sum_{k=1} ^{c-1} \frac{ \log |\sin(k \pi/c)|}{4\pi \sin^2 (k \pi/c)}, \nn \\ 
 - \sum_{k = \lceil{-\frac c 2 \rceil}, k \neq 0 } ^{\lceil \frac c 2 - 1 \rceil}  \frac{\gamma_e + \log |\sin(k \pi/c)|}{4\pi \sin^2 (k \pi/c)} &=& - \frac{\gamma_e}{12\pi} \left( c^2 - 1 \right) - \frac{1}{2\pi} \sum_{k=1} ^{\ceil*{ \frac c 2 -1}} \frac{ \log | \sin(k \pi/c)|}{\sin^2(k \pi/c)}. \nn \eeq 
\end{prop}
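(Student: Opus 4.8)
The plan is to treat the four claimed identities in turn, since each rests on a distinct elementary mechanism, and to postpone to the end the bookkeeping of the summation ranges, which is the only point requiring real care. I would begin with the opening identity, which is pure trigonometry via the double-angle formula $1 - \cos(2k\alpha) = 2\sin^2(k\alpha)$. Substituting this into both the numerator and the denominator of the left-hand summand, the denominator becomes $8\pi \sin^2(k\alpha)$, while in the numerator $\log 2 - \log(1-\cos(2k\alpha)) = -\log(\sin^2(k\alpha)) = -2\log|\sin(k\alpha)|$, so the numerator is $-2(\gamma_e + \log|\sin(k\alpha)|)$. The factor $-2/8 = -1/4$ then produces the right-hand summand termwise, and summing over the arbitrary (and $0$-free) finite index set gives the claim with no convergence issues.

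For the second identity I would exploit that the summand $\tfrac{\gamma_e + \log|\sin(k\pi/c)|}{4\pi \sin^2(k\pi/c)}$ is an even function of $k$, so that the content is entirely about the range. I claim that when $c$ is not an even integer the interval $[\lceil -c/2 \rceil, \lceil c/2 - 1 \rceil]$ is symmetric about the origin. Indeed $\lceil -c/2 \rceil = -\lfloor c/2 \rfloor$, and for $x = c/2 \notin \Z$ one has $\lfloor x \rfloor = \lceil x \rceil - 1$, whence $\lceil -c/2 \rceil = -(\lceil c/2 \rceil - 1) = -\lceil c/2 - 1 \rceil$. Pairing $k$ with $-k$ across this symmetric range (excluding $0$) collapses the sum to twice the sum over positive $k$, which turns the prefactor $4\pi$ into $2\pi$ and yields the identity.

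The two remaining identities, valid for integer $c$, I would build from the partial fraction expansion of the squared cosecant together with the reflection $k \mapsto c-k$. For the cosecant-squared sum, putting $z = k/c$ in $\tfrac{\pi^2}{\sin^2(\pi z)} = \sum_{m \in \Z}\tfrac{1}{(z-m)^2}$ gives $\tfrac{1}{\sin^2(k\pi/c)} = \tfrac{c^2}{\pi^2}\sum_{m \in \Z}\tfrac{1}{(k-mc)^2}$; as $k$ runs over $1,\dots,c-1$ and $m$ over $\Z$, the integer $k - mc$ runs over all integers not divisible by $c$, so the double sum equals $\sum_{n \neq 0} n^{-2} - \sum_{n \neq 0}(cn)^{-2} = 2\zeta_R(2)(1 - c^{-2}) = \tfrac{\pi^2}{3}\tfrac{c^2-1}{c^2}$, producing $\sum_{k=1}^{c-1}\sin^{-2}(k\pi/c) = \tfrac{c^2-1}{3}$. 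For the log-sum, the summand $\tfrac{\log|\sin(k\pi/c)|}{\sin^2(k\pi/c)}$ is invariant under $k \mapsto c-k$ because $\sin((c-k)\pi/c) = \sin(k\pi/c)$; splitting $\{1,\dots,c-1\}$ into the pairs $\{k, c-k\}$ reduces the full sum to twice the sum over $k = 1,\dots,\lceil c/2 - 1\rceil$, the only subtlety being that for even $c$ the self-paired midpoint $k = c/2$ contributes $\log|\sin(\pi/2)| = 0$ and is thus harmlessly absent.

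Finally, for the combined identity I would split the left-hand numerator into its $\gamma_e$ part and its $\log$ part. The $\log$ part is handled precisely by the reflection argument above. For the $\gamma_e$ part I must reduce $\sum_k \sin^{-2}(k\pi/c)$ over the range $[\lceil -c/2 \rceil, \lceil c/2 - 1 \rceil] \setminus \{0\}$ to $\tfrac{c^2-1}{3}$, and this is where the main obstacle lies: for even $c$ this range is the asymmetric set $\{-c/2, \dots, c/2 - 1\}$ rather than a symmetric one. The resolution is that the single unpaired endpoint $k = -c/2$ satisfies $\sin^2(k\pi/c) = \sin^2(\pi/2) = 1$, and a careful count shows that the evenly-reflected contribution plus this unit boundary term reassembles exactly $\sum_{k=1}^{c-1}\sin^{-2}(k\pi/c) = \tfrac{c^2-1}{3}$ in both the even and odd cases. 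Combining the two parts then gives the stated closed form, and I expect the parity-and-ceiling bookkeeping in this last step to be the only genuinely delicate point of the whole proposition.
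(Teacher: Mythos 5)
Your proof is correct, but it takes a genuinely different route from the paper: the paper does not actually prove this proposition in-text at all, it defers entirely to the companion paper \cite{akr_pre}*{Proposition 3.3} via the substitution $\alpha = \pi/c$. Your argument, by contrast, is self-contained, and every step checks out: the double-angle substitution $1-\cos(2k\alpha)=2\sin^2(k\alpha)$ handles the first identity termwise; the ceiling bookkeeping for the second identity is right, since $\lceil -c/2\rceil = -\lfloor c/2\rfloor = -\lceil c/2-1\rceil$ precisely when $c/2\notin\Z$, which is exactly the stated hypothesis that $c$ is not an even integer; the Mittag--Leffler expansion $\pi^2/\sin^2(\pi z)=\sum_{m\in\Z}(z-m)^{-2}$ together with the bijection between $\{1,\dots,c-1\}\times\Z$ and the integers not divisible by $c$, and $2\zeta_R(2)=\pi^2/3$, cleanly yields the closed form $(c^2-1)/3$; and the reflection $k\mapsto c-k$ disposes of the log sums. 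You also correctly isolated and resolved the one delicate point, the asymmetric range $\{-c/2,\dots,c/2-1\}$ when $c$ is even: the unpaired endpoint $k=-c/2$ contributes $1/\sin^2(\pi/2)=1$ to the $\gamma_e$-part, which is exactly the midpoint term of the full sum $\sum_{k=1}^{c-1}$, while it contributes $\log|\sin(\pi/2)|=0$ to the logarithmic part, so both parts reassemble as claimed. What your approach buys is a proof the reader can verify without consulting the reference, with the only nonelementary input being the classical cosecant expansion; what the paper's citation buys is brevity and consistency with \cite{akr_pre}, where these trigonometric sums (phrased in the variable $\alpha$) were first needed.
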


\begin{proof}
The proof of this result can be obtained by following the argument of \cite[Proposition 3.3]{akr_pre} and using the relationship $c= \frac \pi \alpha$, so $\alpha = \frac \pi c$. 
\end{proof}

The next identity transforms a sum of logarithmic derivatives of the Gamma function into elementary trigonometric functions.  We will use this to obtain the expression for the variation of the Bessel zeta function when the parameter is a natural number given in \eqref{eq:th2_eq2} of Theorem \ref{th:main2}.  

\begin{prop} \label{prop:psi_magic}
For any integer $j \geq 1$, we have the identities
\begin{eqnarray}  \frac{1}{2\pi j} \sum_{p=1} ^j p (j-p) \left( \log(2j) + \psi \left( \frac p j \right) \right)   = - \frac{\gamma_e}{12 \pi} \left( j^2 - 1 \right)  - \frac{1}{2\pi} \sum_{k=1} ^{\left \lfloor \frac{j-1}{2} \right\rfloor}  \frac{ \log \left|\sin\left( \frac{k \pi}{j} \right)\right|}{\sin^2 \left(\frac{k \pi}{j} \right)}, \label{eq:trig_identity0} \\
 \frac{1}{2\pi j} \sum_{p=1} ^{j-1}  p (j-p) \left(- \frac \pi 2 \cot\left( \frac {p \pi}{j} \right) + 2 \sum_{k=1} ^{\left\lfloor \frac{j+1}{2} \right\rfloor - 1} \left[ \cos \left( \frac{2kp \pi}{j} \right) \log \sin \left(\frac {k \pi }{j} \right) \right]\right) \label{eq:trig_identity} \\ 
= - \frac{1}{2\pi} \sum_{k=1} ^{\left \lfloor \frac{j-1}{2} \right\rfloor} \frac{ \log \left|\sin\left( \frac{k \pi}{j} \right)\right|}{\sin^2 \left(\frac{k \pi}{j} \right)},
\nn \\
 \frac{1}{\pi j} \sum_{p=1} ^{j-1}  p (j-p)  \sum_{k=1} ^{\left\lfloor \frac{j+1}{2} \right\rfloor - 1} \left[ \cos \left( \frac{2kp \pi}{j} \right) \log \sin \left(\frac {k \pi }{j} \right) \right]   = - \frac{1}{2\pi} \sum_{k=1} ^{\left \lfloor \frac{j-1}{2} \right\rfloor}  \frac{ \log \left|\sin\left( \frac{k \pi}{j} \right)\right|}{\sin^2 \left(\frac{k \pi}{j} \right)}.  \label{eq:trig_identity2}  \end{eqnarray}
\end{prop}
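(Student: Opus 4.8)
The plan is to reduce all three identities to a single closed-form evaluation of a weighted trigonometric sum, together with the Gauss formula for the digamma function at rational arguments. The organizing principle is the Gauss digamma theorem: for $1 \le p \le j-1$,
\[ \psi\!\left(\frac{p}{j}\right) = -\gamma_e - \log(2j) - \frac{\pi}{2}\cot\!\left(\frac{p\pi}{j}\right) + 2\sum_{k=1}^{\left\lfloor \frac{j-1}{2}\right\rfloor}\cos\!\left(\frac{2\pi kp}{j}\right)\log\sin\!\left(\frac{k\pi}{j}\right). \]
Thus $\log(2j)+\psi(p/j)$ equals $-\gamma_e$ plus exactly the bracketed combination of cotangent and cosine-log-sine terms appearing on the left of \eqref{eq:trig_identity}. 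First I would multiply by $p(j-p)$, sum over $p$, and divide by $2\pi j$; the $p=j$ term is harmless since its weight $p(j-p)$ vanishes (this is also what lets me drop the otherwise singular $p=j$ contribution to the cotangent everywhere). The $-\gamma_e$ piece contributes $-\frac{\gamma_e}{2\pi j}\sum_{p=1}^{j-1}p(j-p)$, and the elementary evaluation $\sum_{p=1}^{j-1}p(j-p)=\frac{j(j^2-1)}{6}$ turns this into precisely $-\frac{\gamma_e}{12\pi}(j^2-1)$. Hence \eqref{eq:trig_identity0} follows from \eqref{eq:trig_identity}, and I am reduced to the latter two identities.

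Next I would observe that \eqref{eq:trig_identity} and \eqref{eq:trig_identity2} are in fact the same statement. The cosine-log-sine part of the left side of \eqref{eq:trig_identity}, namely $\frac{1}{2\pi j}\sum_p p(j-p)\cdot 2\sum_k \cos\!\left(\frac{2\pi kp}{j}\right)\log\sin\!\left(\frac{k\pi}{j}\right)$, is literally the left side of \eqref{eq:trig_identity2}. So the two coincide provided the cotangent contribution vanishes, i.e.\ provided
\[ \sum_{p=1}^{j-1} p(j-p)\cot\!\left(\frac{p\pi}{j}\right)=0. \]
This I would prove by the involution $p\mapsto j-p$: the weight $p(j-p)$ is invariant while $\cot\!\left(\frac{(j-p)\pi}{j}\right)=\cot\!\left(\pi-\frac{p\pi}{j}\right)=-\cot\!\left(\frac{p\pi}{j}\right)$, so the summand is antisymmetric about the midpoint and the sum equals its own negative (when $j$ is even the fixed point $p=j/2$ contributes $0$ since $\cot(\pi/2)=0$). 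This collapses \eqref{eq:trig_identity} onto \eqref{eq:trig_identity2}.

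The only genuine content is therefore \eqref{eq:trig_identity2}, which I would establish by interchanging the order of summation and evaluating the inner sum over $p$ in closed form. For fixed $k$ with $1\le k\le j-1$ set $\omega=e^{2\pi i k/j}$, a nontrivial $j$-th root of unity, so $\sum_{p=0}^{j-1}\omega^p=0$. Writing $\phi(x)=\sum_{p=0}^{j-1}x^p=\frac{x^j-1}{x-1}$ and using the identities $\sum p\,x^p = x\phi'(x)$ and $\sum p^2 x^p = x\phi'(x)+x^2\phi''(x)$, I get $\sum_{p=1}^{j-1}p(j-p)x^p=(j-1)x\phi'(x)-x^2\phi''(x)$. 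Differentiating $(x-1)\phi(x)=x^j-1$ once and twice and evaluating at $x=\omega$ (where $\phi(\omega)=0$ and $\omega^j=1$) gives $\phi'(\omega)=\frac{j}{\omega(\omega-1)}$ and a corresponding expression for $\phi''(\omega)$; after the cancellations one finds
\[ \sum_{p=1}^{j-1}p(j-p)\,\omega^p=\frac{2j\,\omega}{(\omega-1)^2}. \]
Since $(\omega-1)^2=-4\sin^2(k\pi/j)\,\omega$, this is the real number $-\frac{j}{2\sin^2(k\pi/j)}$, so taking real parts yields $\sum_{p=1}^{j-1}p(j-p)\cos\!\left(\frac{2\pi kp}{j}\right)=-\frac{j}{2\sin^2(k\pi/j)}$. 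Substituting this into the left side of \eqref{eq:trig_identity2} collapses the double sum to $-\frac{1}{2\pi}\sum_k\frac{\log\sin(k\pi/j)}{\sin^2(k\pi/j)}$, and a short parity check shows $\left\lfloor\frac{j+1}{2}\right\rfloor-1=\left\lfloor\frac{j-1}{2}\right\rfloor$, so the summation ranges match; as $\sin(k\pi/j)>0$ throughout this range, $\log\sin=\log|\sin|$, completing \eqref{eq:trig_identity2}.

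The hard part will be the closed-form evaluation of $\sum_p p(j-p)\omega^p$: it requires differentiating the geometric series twice and evaluating at a root of unity, tracking the several cancellations that conspire to produce the clean answer $\frac{2j\omega}{(\omega-1)^2}$, which fortuitously turns out to be real. I would also be careful with the bookkeeping of the floor-function summation limits and with the fact that, because the weight $p(j-p)$ vanishes at $p=j$, every sum may be taken over $1\le p\le j-1$ without disturbing the cotangent term even though $\cot(p\pi/j)$ is singular at $p=j$.
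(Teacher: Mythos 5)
Your proof is correct, and its skeleton coincides with the paper's: both use the Gauss digamma expansion to reduce \eqref{eq:trig_identity0} to \eqref{eq:trig_identity}, kill the cotangent contribution via the symmetry $p\mapsto j-p$ (your involution argument and the paper's split-and-substitute argument are the same idea, including the observation that the midpoint term vanishes when $j$ is even), and thereby reduce everything to the single evaluation $\sum_{p=1}^{j-1}p(j-p)\cos\left(\frac{2kp\pi}{j}\right)=-\frac{j}{2\sin^2(k\pi/j)}$. Where you genuinely diverge is in how that decisive sum is computed. The paper quotes the table identity \cite[1.352]{gr} for $\sum_\ell \ell\cos(\ell x)$, differentiates it in $x$ to manufacture $\sum_\ell \ell^2\cos(\ell x)$, and then carries out a lengthy real-trigonometric simplification at $x=2k\pi/j$. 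You instead set $\omega=e^{2\pi i k/j}$ and $\phi(x)=\frac{x^j-1}{x-1}=\sum_{p=0}^{j-1}x^p$, differentiate $(x-1)\phi(x)=x^j-1$ twice, and evaluate at $x=\omega$ where $\phi(\omega)=0$; the $j(j-1)$ terms cancel to leave $\sum_{p=1}^{j-1}p(j-p)\omega^p=\frac{2j\omega}{(\omega-1)^2}$, and the identity $(\omega-1)^2=-4\sin^2(k\pi/j)\,\omega$ shows this is real and equal to $-\frac{j}{2\sin^2(k\pi/j)}$, so taking real parts finishes the job. I checked each step of this ($\phi'(\omega)=\frac{j}{\omega(\omega-1)}$, the second differentiation, the floor-function bookkeeping $\left\lfloor\frac{j+1}{2}\right\rfloor-1=\left\lfloor\frac{j-1}{2}\right\rfloor$, the vanishing weight at $p=j$, and the positivity of $\sin(k\pi/j)$ on the summation range); all are sound. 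Your root-of-unity computation is shorter, self-contained, and less error-prone than differentiating a quotient of sines; what the paper's route buys is that it stays entirely within catalogued real-variable identities, which some readers may prefer to verify against a standard reference.
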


\begin{proof}
By \cite[8.363.6]{gr}, for $2 \leq j \in \N$ and $\N \ni p \leq j-1$,
\[ \psi\left( \frac p j \right) = - \gamma_e - \log(2j) - \frac \pi 2 \cot\left( \frac {p \pi}{j} \right) + 2 \sum_{k=1} ^{\left\lfloor \frac{j+1}{2} \right\rfloor - 1} \left[ \cos \left( \frac{2kp \pi}{j} \right) \log \sin \left(\frac {k \pi }{j} \right) \right].\]

We observe that
\[ \sum_{p=1} ^j p(j-p) = \sum_{p=1} ^{j-1} p (j-p) = \frac{j (j+1)(j-1)}{6} \implies \frac{1}{2\pi j} \sum_{p=1} ^{j-1}  -\gamma_e p (j-p) = - \frac{\gamma_e}{12\pi} (j^2 - 1). \]

Consequently, establishing \eqref{eq:trig_identity} immediately implies \eqref{eq:trig_identity0}. Consider
\[ \sum_{p=1} ^{j-1} p (j-p) \cot \left( \frac{p \pi}{j} \right).\]
In case $j$ is odd, we split the sum into $\sum_{p=1} ^{\frac{j-1}{2}} + \sum_{p=\frac{j+1}{2}} ^{j-1}$. 
In the second sum, we use the substitution $k=j-p$ to obtain
\[ \sum_{p=1} ^{j-1} p (j-p) \cot \left( \frac{p \pi}{j} \right) = \sum_{p=1} ^{\frac{j-1}{2}} p(j-p) \cot \left( \frac{p \pi}{j} \right) + \sum_{k=1} ^{ \frac{j-1}{2}} (j-k) k \cot\left( \frac{(j-k) \pi}{j} \right) \]
\[ =  \sum_{p=1} ^{\frac{j-1}{2}} p(j-p) \cot \left( \frac{p \pi}{j} \right) + \sum_{k= 1}^{\frac{j-1}{2}} (j-k) k \cot\left( \frac{-k \pi}{j} \right) =0.\]
For $j$ even, we note that
\[p = \frac j 2 \implies \cot\left( \frac{j\pi}{2j} \right) = 0 \implies \sum_{p=1} ^{j-1} = \sum_{p=1} ^{\frac j 2 - 1} + \sum_{p=\frac j 2 + 1} ^{j-1}.\]
Again substituting $k=j-p$ in the second sum we obtain
\[ \sum_{p=1} ^{j-1} p(j-p) \cot \left( \frac{p \pi}{j} \right) = \sum_{p=1} ^{\frac j 2 - 1} p(j-p) \cot \left( \frac{p \pi}{j} \right) + \sum_{k= 1}^{\frac j 2 -1} (j-k) k \cot\left( \frac{(j-k) \pi}{j} \right) =0.\]
It therefore suffices to establish the identity \eqref{eq:trig_identity2} to obtain \eqref{eq:trig_identity} and therefore also \eqref{eq:trig_identity0}.

By \cite[1.352]{gr} 
\[ \sum_{\ell=1} ^{n-1} \ell \cos(\ell x) = \frac{ n \sin \left( \frac{2n-1}{2} x \right)}{2 \sin \left( \frac x 2 \right)} - \frac{1 - \cos(nx)}{4 \sin^2 \left( \frac x 2 \right)}.\] 
In our application, we have $x=\frac{2k\pi}{j}$, letting $n=j$, so
\[  \sum_{p=1} ^{j-1} p \cos \left( \frac{2k\pi}{j} p\right) = \frac{j \sin\left( \frac{2j-1}{2} \frac{2k\pi}{j} \right)}{2 \sin \left( \frac{k\pi}{j} \right)} - \frac{1- \cos\left( j \frac{2k\pi}{j} \right)}{4 \sin^2 \left( \frac{k\pi}{j} \right)}=  \frac{j \sin\left( \frac{2j-1}{j} k\pi \right)}{2 \sin \left( \frac{k\pi}{j} \right)} = \frac j 2 \frac{ \sin \left( 2k\pi - \frac{k \pi}{j} \right)}{\sin\left( \frac{k \pi}{j} \right)} = - \frac j 2.\]
We therefore have
\[  \sum_{p=1} ^{j-1} p j \cos \left( \frac{2k\pi}{j} p\right) = - \frac{j^2}{2}.\]
We also need to compute $\sum_{p=1} ^{j-1} p^2 \cos \left( \frac{2kp \pi}{j} \right)$. 
For this we differentiate \cite[1.352]{gr}, obtaining
\[ \frac{d}{dx} \sum_{k=1} ^{n-1} k \sin(kx) = \sum_{k=1} ^{n-1} k^2 \cos(kx) = \frac{d}{dx} \left( \frac{\sin(nx)}{4 \sin^2 \left( \frac x 2 \right) } - \frac{n \cos \left( \frac{2n-1}{2} x \right)}{2 \sin\left( \frac x 2 \right) } \right)\]
\[ = \frac 1 4 \frac{ n \cos(nx) \sin^2 \left( \frac x 2 \right) - \sin(nx) 2 \sin \left( \frac x 2 \right) \cos \left( \frac x 2 \right) \frac 1 2}{\sin^4 \left( \frac x 2\right)} \]
\[ - \frac n 2 \left[ \frac{(-1) \left( \frac{2n-1}{2} \right) \sin \left( \frac{2n-1}{2} x \right) \sin \left( \frac x 2 \right) -  \cos \left( \frac{2n-1}{2} x \right) \cos \left( \frac x 2 \right) \frac 1 2}{\sin^2 \left( \frac x 2 \right)} \right]\]
\[ = \frac 1 4 \frac{ n \cos(nx) \sin \left( \frac x 2 \right) - \sin(nx) \cos \left( \frac x 2 \right)}{\sin^3 \left( \frac x 2 \right)} + \frac n 2 \frac{\left( \frac{2n-1}{2} \right) \sin \left( \frac{2n-1}{2} x \right) \sin \left( \frac x 2 \right) + \cos \left( \frac{2n-1}{2} x \right) \cos \left( \frac x 2 \right) \frac 1 2}{\sin^2 \left( \frac x 2 \right)}. \]
We have 
\[ x = \frac{2k\pi}{j}, \quad n = j, \implies  \cos(nx) = \cos \left( j \frac{2k\pi}{j} \right) = 1,\]
\[ \cos \left( \frac{2n-1}{2} x \right) = \cos\left( \left( j - \frac 1 2 \right) \frac{2k\pi}{j} \right) = \cos \left( - \frac{k \pi}{j} \right) = \cos \left( \frac{k\pi}{j} \right),\]
\[ \cos \left( \frac x 2 \right) = \cos \left( \frac{k \pi}{j} \right), \] 
\[   \sin(nx) = \sin \left( j \frac{2k\pi}{j} \right) = 0, \, \sin  \left( \frac{2n-1}{2} x \right) = \sin  \left( - \frac{k \pi}{j} \right) = - \sin \left( \frac{k\pi}{j} \right), \,  \sin \left( \frac x 2 \right) = \sin \left( \frac{k \pi}{j} \right).\]
Consequently,
\[ \sum_{p=1} ^{j-1} p^2 \cos \left( \frac{2 k \pi p}{j} \right) = \frac{j}{4 \sin^2 \left( \frac{k \pi}{j} \right)} + \frac j 2 \frac{ \left[ \left( j - \frac 1 2 \right) \left(- \sin^2 \left( \frac{k \pi}{j} \right)\right) + \frac 1 2 \cos^2 \left( \frac{k \pi}{j} \right) \right]}{\sin^2 \left( \frac{k \pi}{j} \right)}\]
\[ = \frac{j}{4 \sin^2 \left( \frac{k \pi}{j} \right)} + \frac{j}{2 \sin^2 \left( \frac{k \pi}{j} \right)} \left( \frac 1 2 \cos^2 \left( \frac{k \pi}{j} \right) - j \sin^2 \left( \frac{k \pi}{j} \right) + \frac 1 2 \sin^2 \left( \frac{k \pi}{j} \right) \right) \]
\[= \frac{j}{4 \sin^2 \left( \frac{k \pi}{j} \right)} + \frac{j}{2 \sin^2 \left( \frac{k \pi}{j} \right)} \left( \frac 1 2 -  j \sin^2 \left( \frac{k \pi}{j} \right) \right)  =  \frac{j}{2 \sin^2 \left( \frac{k \pi}{j} \right)} - \frac 1 2 j^2.\]
We therefore have
\[ \sum_{p=1} ^{j-1} p (j-p) \cos \left( \frac{2kp \pi}{j} \right) =  - \frac{j^2}{2} - \left( \frac{j}{2 \sin^2 \left( \frac{k \pi}{j} \right)} - \frac 1 2 j^2 \right) = - \frac{j}{2 \sin^2 \left( \frac{k \pi}{j} \right)}.  \]
This shows that
\[  \frac{1}{\pi j} \sum_{p=1} ^{j-1}  p (j-p) \cos \left( \frac{2kp \pi}{j} \right) = - \frac{1}{2\pi \sin^2 \left( \frac{k \pi}{j} \right)}. \]
Note that
\[ \left\lfloor \frac{j+1}{2} \right\rfloor - 1 = \left \lfloor \frac{j-1}{2} \right\rfloor \implies 
 \frac{1}{\pi j} \sum_{p=1} ^{j-1}  p (j-p)  \sum_{k=1} ^{\left\lfloor \frac{j+1}{2} \right\rfloor - 1} \left[ \cos \left( \frac{2kp \pi}{j} \right) \log \sin \left(\frac {k \pi }{j} \right) \right]  \]
\[ = \sum_{k=1} ^{ \left \lfloor \frac{j-1}{2} \right\rfloor} \frac{1}{\pi j} \sum_{p=1} ^{j-1}  p (j-p)  \left[ \cos \left( \frac{2kp \pi}{j} \right) \log \sin \left(\frac {k \pi }{j} \right) \right]    =  - \frac{1}{2\pi} \sum_{k=1} ^{ \left \lfloor \frac{j-1}{2} \right\rfloor} \frac{\log \sin \left(\frac {k \pi }{j} \right)}{\sin^2 \left( \frac{k \pi}{j} \right)}.\]
\end{proof}

\begin{proof}[Proof of \eqref{eq:th2_eq2} in Theorem \ref{th:main2}]
For the reader's convenience we recall \eqref{eq:xic_zetac} here:
\[ \xi_c'(0) = \frac 1 2 \left[ \zeta_c '(0) + \frac{5}{24c} - \frac{1}{12} \log 2 \left( c + \frac 1 c \right) \right]. \]
Consequently by Proposition \ref{prop:barnes1}
\begin{eqnarray*}
\left . \frac{d}{dc} \xi_c '(0) \right|_{c=j} &=& \frac 1 2 \left[  - \frac{1}{12} - \frac{1}{8j^2} + \frac{1-j^2}{12 j^2} \log j  - \frac{1}{2 j^3} \sum_{p=1} ^j p(j-p) \psi \left( \frac p j \right)  - \frac{5}{24j^2}  \right.  - \left. \frac{1}{12} \log(2) \left( 1 - \frac{1}{j^2} \right) \right] \\
 &=&  \frac 1 2 \left[  - \frac{1}{12} - \frac{1}{3j^2} + \frac{(1-j^2) \log(2j)}{12j^2} -  \frac{1}{2 j^3} \sum_{p=1} ^j p(j-p) \psi \left( \frac p j \right) \right]. 
 \end{eqnarray*}
Since
\[ \sum_{p=1} ^j p(j-p) = \frac{j (j+1)(j-1)}{6} \implies \frac{1-j^2}{12 j^2} \log(2j) = - \frac{1}{2j^3} \sum_{p=1} ^j p(j-p) \log(2j),\]
Consequently we obtain that
\[ \left . \frac{d}{dc} \xi_c '(0) \right|_{c=j} = \frac 1 2  \left[ - \frac{1}{12} - \frac{1}{3j^2} - \frac{1}{2j^3} \sum_{p=1} ^j p(j-p) \left[ \log(2j) + \psi \left( \frac p j \right) \right] \right]. \]
By Proposition \ref{prop:psi_magic} we therefore have
\[  \left . \frac{d}{dc} \xi_c '(0) \right|_{c=j} = \frac 1 2  \left[ -\frac{1}{12} - \frac{1}{3j^2} + \frac{1}{2j^2} \left[ \frac{\gamma_e (j^2-1)}{6} + \sum_{k=1} ^{\lfloor \frac{j-1}{2} \rfloor} \frac{ \log | \sin(k \pi/j)|}{\sin^2(k \pi /j)} \right] \right]. \]
It is straightforward to show that
\[ \lfloor \frac{j-1}{2} \rfloor = \lceil \frac j 2 - 1 \rceil \implies  \left . \frac{d}{dc} \xi_c '(0) \right|_{c=j} = \frac 1 2  \left[ - \frac{1}{12} - \frac{1}{3j^2} + \frac{1}{2j^2} \left[ \frac{\gamma_e (j^2-1)}{6} + \sum_{k=1} ^{\lceil \frac j 2 - 1 \rceil} \frac{ \log | \sin(k \pi/j)|}{\sin^2(k \pi /j)} \right] \right] \]
\beq = -\frac{1}{24} - \frac{1}{6j^2} + \frac{\gamma_e(j^2-1)}{24j^2} + \frac{1}{4j^2} \sum_{k=1} ^{\lceil \frac j 2 - 1 \rceil} \frac{ \log | \sin(k \pi/j)|}{\sin^2(k \pi /j)}. \label{eq:v1_mag} \eeq
Next, we recall the calculation from Proposition \ref{prop:v2} that was obtained by an entirely independent method
\[  \frac{d}{dc} \xi_c '(0) = - \frac{\pi}{2c^2} \left( \frac{1}{3\pi} + \frac{c^2}{12 \pi} +  \sum_{k = \lceil{-\frac c 2 \rceil}, k \neq 0 } ^{\lceil \frac c 2 - 1 \rceil} \frac{-2\gamma_e + \log 2 - \log\left({1-\cos(2k\pi/c)}\right) }{4 \pi (1-\cos(2k\pi/c))} \right) \]
\[ - \frac{\pi}{2c^2} \left( \frac{2c}{\pi} \sin(\pi c) \int_\R \frac{ - \log 2 + 2 \gamma_e + \log(1+\cosh(s))}{16 \pi (1+\cosh(s)) (\cosh(c s) - \cos(c^2/\pi))} ds \right). \]
Since $c=j$ is an integer, $\sin(\pi c) = 0$, and the expression becomes
\[ - \frac{1}{6j^2} - \frac{1}{24} - \frac{1}{2j^2}  \sum_{k = \lceil{-\frac j 2 \rceil}, k \neq 0 } ^{\lceil \frac j 2 - 1 \rceil} \frac{-2\gamma_e + \log 2 - \log\left({1-\cos(2k\pi/j)}\right) }{4 (1-\cos(2k\pi/j))}. \]
By Proposition \ref{prop:reformulate_sum_w_alpha},
\[  \sum_{k = \lceil{-\frac j 2 \rceil}, k \neq 0 } ^{\lceil \frac j 2 - 1 \rceil} \frac{-2\gamma_e + \log 2 - \log\left({1-\cos(2k\pi/j)}\right) }{4 (1-\cos(2k\pi/j))} = \frac{\gamma_e}{12} (j^2-1) + \frac{1}{2} \sum_{k=1} ^{\lceil \frac j 2 - 1 \rceil} \frac{ \log|\sin(k\pi/j)|}{\sin^2(k\pi/j)}. \]
Consequently, the expression from Proposition \ref{prop:v2} becomes
\[ - \frac{1}{24} - \frac{1}{6j^2} - \frac{\gamma_e}{24j^2} (j^2-1) - \frac{1}{4j^2} \sum_{k=1} ^{\lceil \frac j 2 - 1 \rceil} \frac{ \log|\sin(k\pi/j)|}{\sin^2(k\pi/j)},\]
indeed coinciding with \eqref{eq:v1_mag} that was obtained by a completely independent method.  
\end{proof}

%%%%%%%%%%%%%%%%%%%%%%%%%%%%%%%%%%%%%%%%%%%%%%%%%%%%%%%%%

\subsection{The variation of the Bessel zeta function when the parameter is not a natural number} \label{ss:generalangles}  
To complete the proof of Theorem \ref{th:main2} when the parameter is not an integer, we require the calculation of certain integrals.  This is achieved in the following two lemmas.  In these lemmas we prove that a certain integral can be replaced by a sum.  If one were to guess, this should follow from a complex residue calculation, but it is not immediately obvious, because to prove these results, we do not simply apply the residue theorem to the integrand.  Instead, we make a clever choice of an auxiliary function on which to apply the residue theorem.  This technique may provide insights for other contexts in which one would like to transform integrals into finite, explicit sums in fully closed form. 

\begin{lemma} \label{le:residuesnolog}
We have the following identity for $0<c \not \in \N$
\[ \frac{c}{4\pi^2} \sin \left( \pi c \right) \int_\R \frac{ds}{(1+\cosh s)\left( \cosh(cs) - \cos (\pi c) \right) } = \frac{1}{12} \left( \frac 1 \pi - \frac{c^2}{\pi} \right) + \frac{1}{2\pi} \sum_{n=\ceil*{-\frac c 2}, n \neq 0} ^{\floor{ \frac c 2}} \frac{1}{1-\cos(2 \pi n/c)}.\]
\end{lemma}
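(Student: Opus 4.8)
The plan is to evaluate $I := \int_\R \frac{ds}{(1+\cosh s)(\cosh(cs)-\cos(\pi c))}$ by a contour/residue computation and then multiply by the prefactor $\frac{c}{4\pi^2}\sin(\pi c)$. First I would record the analytic structure of the integrand $f$. It is even and decays like $e^{-(1+c)|s|}$ as $|\re s|\to\infty$, so it is integrable and the vertical sides of any horizontal-strip rectangle drop out as the width tends to infinity. All of its poles are purely imaginary: the factor $1+\cosh s=2\cosh^2(s/2)$ has double zeros at $s=i\pi(2k+1)$, while $\cosh(cs)-\cos(\pi c)$ vanishes on the two families $s=i(\pm\pi+2\pi m/c)$, $m\in\Z$, which are simple zeros because $c\notin\N$ forces $\sin(\pi c)\neq 0$. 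I would also use the partial fraction identity $\frac{1}{\cosh(cs)-\cos\pi c}=\frac{1}{2i\sin\pi c}\left[\coth\frac{c(s-i\pi)}2-\coth\frac{c(s+i\pi)}2\right]$, which both exhibits the poles and explains the appearance of the prefactor $\sin(\pi c)$.

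The residue bookkeeping is the part I am most confident about, so I would do it first to see the target. Integrating over the rectangle with horizontal edges $\im s=0$ and $\im s=2\pi$, the simple poles $s_n=i(\pi+2\pi n/c)$ of the first family lying in $0<\im s_n<2\pi$, i.e. $n\in[\lceil -c/2\rceil,\lfloor c/2\rfloor]$, should produce the finite sum: there $1+\cosh s_n=1-\cos(2\pi n/c)$ and $c\sinh(cs_n)=ic\sin(\pi c)$, so $2\pi i\,\Res_{s_n}f=\frac{2\pi}{c\sin\pi c}\,\frac{1}{1-\cos(2\pi n/c)}$, and after multiplying by $\frac{c\sin\pi c}{4\pi^2}$ this is exactly $\frac{1}{2\pi}\frac{1}{1-\cos(2\pi n/c)}$. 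Thus the off-diagonal ($n\neq 0$) residues reproduce the claimed sum term for term.

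The polynomial term must come from the confluent point $s=i\pi$ (the $n=0$ case), where the double zero of $1+\cosh s$ and a simple zero of $\cosh(cs)-\cos\pi c$ collide into a triple pole. I would extract its residue from the Laurent data $1+\cosh(i\pi+u)=-\frac{u^2}2(1+\frac{u^2}{12}+\cdots)$ and $\cosh(c(i\pi+u))-\cos\pi c=ic\sin(\pi c)\,u\,[1-\tfrac{ic}{2}\cot(\pi c)\,u+\tfrac{c^2}{6}u^2+\cdots]$, whose reciprocals multiply to $\Res_{i\pi}f=\frac{2}{ic\sin\pi c}\left(\frac{c^2}{4}\cot^2\pi c+\frac{c^2}{6}+\frac{1}{12}\right)$, contributing $\frac{1}{\pi}\left(\frac{c^2}{4}\cot^2\pi c+\frac{c^2}{6}+\frac{1}{12}\right)$ to $\frac{c\sin\pi c}{4\pi^2}I$.

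\textbf{Main obstacle.} The two factors have incommensurate imaginary periods $2\pi i$ and $2\pi i/c$, so no horizontal strip returns $f$ to itself; in particular the translated edge $\int_\R f(x+2\pi i)\,dx$ neither vanishes nor equals $-I$, and the second family $s=i(-\pi+2\pi m/c)$ also lies in the strip with residues of the opposite sign. This is exactly where a clever auxiliary function enters: rather than integrating $f$, I would integrate a function assembled from $f$ together with one $\coth$-summand above (equivalently, insert an Abel--Plana-type kernel whose poles sit on the first family and whose half-plane behavior is controlled), chosen so that the translated-edge contribution becomes explicit and combines with the second-family residues. I expect the genuinely delicate bookkeeping to be this cancellation: the unwanted $\cot^2(\pi c)=\csc^2(\pi c)-1$ and $\frac{c^2}{6}$ pieces coming from the triple pole must be annihilated by the edge and second-family terms, leaving precisely $\frac{1}{12}\left(\frac{1}{\pi}-\frac{c^2}{\pi}\right)$. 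Finally I would confirm that the indices in the strip are exactly $\lceil -c/2\rceil\le n\le\lfloor c/2\rfloor$ with $n=0$ removed (the confluent pole), matching the stated summation limits.
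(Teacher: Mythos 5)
Your pole bookkeeping, your partial-fraction identity, and your residue computations at the off-diagonal simple poles are all correct, and your sketch points at exactly the route the paper takes: the paper's ``clever auxiliary function'' is precisely one coth summand of your decomposition, namely $h(s)=\frac{1}{1+\cosh s}\,\frac{e^{-cs}}{e^{-cs}-e^{i\pi c}}=\frac{1}{2(1+\cosh s)}\bigl(1-\coth\frac{c(s+i\pi)}{2}\bigr)$. The genuine gap is that you never supply the one fact that makes the contour close, and that fact is the whole proof. For real $s$ one has the reflection identity $h(s-2\pi i)=\overline{h(s)}$ (for the mirror choice $F(s)=\frac{1}{2i\sin(\pi c)}\,\frac{\coth(c(s-i\pi)/2)}{1+\cosh s}$ it reads $F(s+2\pi i)=-\overline{F(s)}$). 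Since $f=h-\overline{h}$ (equivalently $f=F+\overline{F}$) on $\R$, this identity converts $\int_\R f$ into a \emph{single closed rectangle integral of the auxiliary function} over a strip of height $2\pi$: the translated edge is not an unknown error term ``to be combined with second-family residues'' --- it is exactly the conjugate of the bottom edge. And because the auxiliary function contains only one of the two coth families, there are no second-family poles inside the strip at all; the delicate cancellation you defer to is not a step anyone performs, it is dissolved by the choice of auxiliary function.

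Relatedly, the triple-pole residue you computed, while arithmetically correct, is of the wrong function: you expanded $f$ itself at $s=i\pi$, producing the $\cot^2(\pi c)$ and $\frac{c^2}{6}$ terms, and that number never enters the argument. What the proof needs is the residue of the auxiliary function at its triple pole. Using $1+\cosh(i\pi+u)=-\frac{u^2}{2}\bigl(1+\frac{u^2}{12}+\cdots\bigr)$ and $\coth\frac{cu}{2}=\frac{2}{cu}+\frac{cu}{6}+\cdots$, one finds
\[
\Res_{s=i\pi}F=\frac{-1}{i\sin(\pi c)}\left(\frac{c}{6}-\frac{1}{6c}\right),
\]
with no $\cot^2$ term, and after multiplying by $2\pi i$ and the prefactor $\frac{c\sin(\pi c)}{4\pi^2}$ this is exactly $\frac{1}{12}\bigl(\frac{1}{\pi}-\frac{c^2}{\pi}\bigr)$; the simple-pole residues you already computed carry over verbatim since the other coth summand is regular there. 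So as written, your proposal verifies the sum term and correctly diagnoses the obstacle, but the polynomial term --- the actual content of the lemma beyond the simple poles --- is left as an unproved expectation; supplying the reflection identity and recomputing the central residue for the auxiliary function (rather than for $f$) is what is needed to close it, and doing so reproduces the paper's proof.
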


\begin{proof}
For $s$ real, we define
\[ f(s) := i \sin \left( \pi c\right) \frac{1}{(1+\cosh s) \left( \cosh(cs)- \cos(\pi c) \right) }.\]
Note that
\[ \cosh(cs)- \cos(\pi c)= \frac 1 2 e^{cs} \left( e^{-cs} - e^{i \pi c } \right) \left( e^{-cs} - e^{-i \pi c } \right), \]
from which we obtain
\[ \frac{1}{ \cosh(cs) - \cos(\pi c)} = \frac{1}{i \sin(\pi c)} e^{-cs} \left[ \frac{1}{e^{-cs} - e^{i \pi c}} - \frac{1}{e^{-cs} - e^{-i \pi c}} \right].\]
For $s\in \R$, set
\[ g(s) = \frac{e^{-cs}}{e^{- cs} - e^{i \pi c}} \implies f(s) = \frac{1}{1+\cosh s} \left( g(s) - \overline{ g(s)} \right) =: h(s) - \overline{h(s)}.\]
Now, we extend $h$ to $\C$ with the same definition as above. Further observe that for $s\in \R$ we have $h(-2\pi i + s)$ $= \overline{h(s)}$. 
Consequently, we have the equality that 
\[  \int_{-R} ^R f(s) ds = \int_{-R} ^{R} h(s) ds - \int_{-R} ^R \overline{h(s)} ds = \int_{-R} ^R h(s) ds + \int_{R-2\pi i} ^{-R - 2\pi i} h(z) dz. \]  

Consider the contour integral of $h(s)$ over the clockwise contour from $-R$ to $R$ to $R-2\pi i$ to $-R-2\pi i$ and returning back to $-R$; see Figure \ref{fig:contour1}.  We denote this contour by $\Gamma_R$. For large $R$ it is straightforward to estimate that the integrals along the sides of this box, from $R$ to $R-2\pi i$ and from $-R - 2\pi i$ to $-R$ vanish as $R \to \infty$.  We therefore obtain, owing to the negative orientation of the contour 
\[\int_{-\infty} ^\infty  f(s) ds = \lim_{R \to \infty} \int_{\Gamma_R}  h(s)  ds = - 2\pi i \sum  \Res(h(s); s_n),\]
where the sum is taken over all poles between the two parallel lines $\R$, and $\R - 2\pi i$.  
The function $h(s)$ has a third order pole at $s=-i \pi$.   It has first order poles due to the vanishing of
\[ e^{- cs} - e^{i \pi c}, \quad \textrm{for } s= - i (\pi + 2 n \pi/c), \quad n \in \Z \bigcap \left[ \left( - \frac c 2 , 0 \right)  \cup \left( 0, \frac c 2 \right) \right]. \]
We begin by computing the residues at the simple poles:
\[ \left . \Res(h(s)) \right|_{s=s_n} = \frac{1}{1+\cosh(s_n)} \frac{e^{-c s_n}}{ - c e^{- s_n c} } = - \frac{1}{c (1+\cosh(s_n))}, \]
which for $s_n = - i (\pi + 2 n \pi/c)$ is equal to
\[ - \frac{1}{c (1- \cos(2 n \pi/c)}, \quad n \in \Z \bigcap \left[ \left( - \frac c 2 , 0 \right)  \cup \left( 0, \frac c 2 \right) \right]. \]
Note that the left side of the equality given in the statement of the lemma is then equal to 
\beq %\frac{c}{4\pi^2} \sin \left( \pi c \right) \int_\R \frac{ds}{(1+\cosh s)\left( \cosh(cs) - \cos (\pi c) \right) } =
 - \frac{ic}{4\pi^2} \int_{-\infty} ^\infty f(s) ds 
 = - \frac{c}{2\pi}  \sum \Res(h(s); s_n).  \label{eq:integralresidues} \eeq 
Consequently, the contribution from the simple poles of $h$ is
\[ \frac{1}{2\pi} \sum_{n=\ceil*{-c/2}, n \neq 0} ^{\floor{c/2}} \frac{1}{1-\cos(2 \pi n/c)}.\]

\begin{figure} \centering
\begin{tikzpicture}[decoration={markings,
    mark=at position 1cm   with {\arrow[line width=1pt]{stealth}},
    mark=at position 4.5cm with {\arrow[line width=1pt]{stealth}},
    mark=at position 7cm   with {\arrow[line width=1pt]{stealth}},
    mark=at position 9.5cm with {\arrow[line width=1pt]{stealth}}
  }]
  \draw[thick, ->] (-6,4) -- (6,4) coordinate (xaxis);

  \draw[thick, ->] (0,0) -- (0,6) coordinate (yaxis);

  \node[above] at (xaxis) {$\mathrm{Re}(s)$};

  \node[right]  at (yaxis) {$\mathrm{Im}(s)$};

  \node[right] at (0,1.7) {$-2\pi i$};

  \node[right] at (0, 4.4) {$\Gamma_R$};

  \path[draw,blue, line width=0.8pt, postaction=decorate]
        (-4,4) node[above, black] {$-R$}
    --  (4, 4)   node[above, black] {$R$}
    --  (4, 2)
    --  (-4,2) -- (-4,4);
       \node at (0,2)[circle,fill,inner sep=1.5pt]{};

\end{tikzpicture}
\caption{The contour of integration, $\Gamma_R$.} \label{fig:contour1}
\end{figure}
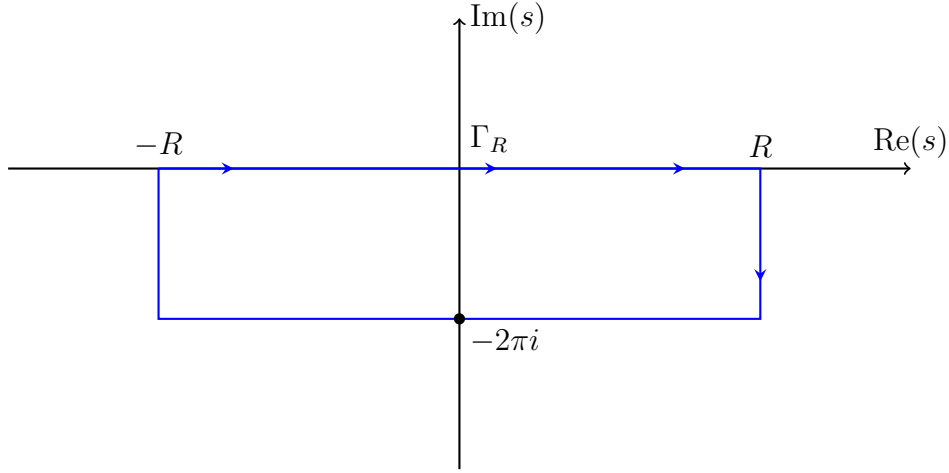

The residue at the third order pole is
\[ \frac 1 2\left .  \frac{d^2}{dz^2} \left[ (z+i \pi)^3 h(z) \right] \right|_{z=-i \pi}.\]
Since we have
\[ h(z) = \frac{e^{-cz}}{(1+\cosh(z)) \left(e^{-cz} - e^{i \pi c }\right)}, \]
we shall compute the series expansions of the functions in these expressions near $z= - i \pi$.  First, we have
\[ 1+ \cosh(z) = \sum_{n \geq 1} - \frac{ (z+i \pi)^{2n}}{(2n)!} = - (z+i\pi)^2 \sum_{n \geq 0} \frac{ (z+i\pi)^{2n}}{(2(n+1))!}.\]
Next we have
\[ e^{-cz} - e^{i \pi c} = \sum_{n \geq 1} \frac{ (z+i\pi)^n}{n!} \left( - c \right)^n e^{i\pi c} = (z+i\pi) e^{i \pi c} \sum_{n \geq 0}  \frac{ (z+i\pi)^n}{(n+1)!} \left( - c \right)^{n+1}.\]
We therefore wish to compute
\[ - \frac{1}{2 e^{i \pi c}} \left . \frac{d^2}{dz^2} \frac{ (z+i\pi)^3 e^{-c z}}{ (z+i\pi)^3 \sum_{n \geq 0} \frac{ (z+i\pi)^{2n}}{(2(n+1))!}  \sum_{m \geq 0}  \frac{ (z+i\pi)^m}{(m+1)!} ( - c )^{m+1} } \right|_{z=-i\pi}\]
\[ = - \frac{1}{2 e^{i \pi c}} \left . \frac{d^2}{dz^2} \frac{e^{-cz}}{  \sum_{n \geq 0} \frac{ (z+i\pi)^{2n}}{(2(n+1))!}  \sum_{m \geq 0}  \frac{ (z+i\pi)^m}{(m+1)!} \left( - c \right)^{m+1} } \right|_{z=-i\pi}.\]
To simplify notations, let us define here
\[ \varphi(z):=  \sum_{n \geq 0} \frac{ (z+i\pi)^{2n}}{(2(n+1))!}, \quad \psi(z) :=  \sum_{n \geq 0}  \frac{ (z+i\pi)^n}{(n+1)!} \left( - c \right)^{n+1} .\]
Then
\[ \frac{d}{dz} \frac{e^{-cz}}{\varphi \psi} = \frac{ - c e^{-cz} \varphi \psi - e^{-cz} (\varphi \psi' + \varphi' \psi)}{(\varphi \psi)^2}\]
\[ = - c \frac{e^{-cz}}{\varphi \psi} -  \frac{e^{-cz} (\varphi \psi' + \varphi' \psi)}{(\varphi \psi)^2} =  - c \frac{e^{-cz}}{\varphi \psi} - \frac{e^{-cz} \psi'}{\varphi \psi^2} - \frac{e^{-cz} \varphi'}{\psi \varphi^2}.\]
Next,
\[ \frac{d^2}{dz^2}  \frac{e^{-cz}}{\varphi \psi} = - c \left( \frac{ -c e^{-cz} \varphi \psi - e^{-cz} (\varphi \psi' + \varphi' \psi)}{(\varphi \psi)^2} \right)\]
\[ - \left( \frac{ \left( - c e^{-cz} \psi' + e^{-cz} \psi'' \right) \varphi \psi^2 - e^{- cz} \psi' \left( \varphi' \psi^2 + 2\varphi \psi \psi' \right)}{(\varphi \psi^2)^2} \right) \]
\[ - \left( \frac{ \left( - c e^{-cz} \varphi' + e^{-cz} \varphi'' \right) \psi \varphi^2 - e^{- cz} \varphi' \left( \psi' \varphi^2 + 2\psi \varphi \varphi' \right)}{(\psi \varphi^2)^2} \right) \]
Since $\varphi'(-i\pi) = 0$, at $z=-i\pi$ we must only evaluate the terms
\[ \left(c \right)^2 \frac{e^{-cz}}{\varphi \psi} + c \frac{e^{-cz} \psi'}{\varphi \psi^2} + c \frac{e^{-cz} \psi'}{\varphi \psi^2} - \frac{e^{-cz} \psi''}{\varphi \psi^2} + 2 \frac{e^{-cz} (\psi')^2}{\varphi \psi^3}  - \frac{e^{-cz} \varphi''}{\psi \varphi^2}.\]
We compute this using
\[ \varphi(-i\pi) = \frac 1 2, \quad \varphi''(-i\pi) = \frac{1}{12}, \quad \psi(-i\pi) = - c, \quad \psi'(-i\pi) = \frac{c^2}{2}, \quad \psi''(-i\pi) = - \frac{c^3}{3 },\]
obtaining
\[ e^{i\pi c} \left( - 2c + 2 c  + \frac{2c}{3} - c + \frac{1}{3c} \right) = e^{i\pi c} \left( - \frac{c}{3} + \frac{1}{3c} \right).\]
Consequently, the residue is
\[ - \frac 1 2 \left( - \frac{c}{3}  + \frac{1}{3c} \right) =  \frac{c}{6}  - \frac{1}{6c}.\]
Recalling the factor $-\frac{c}{2 \pi }$ in \eqref{eq:integralresidues}, the contribution to the integral is therefore
\[ - \frac{c^2}{12 \pi} + \frac{1}{12\pi}.  \]
This completes the proof.
\end{proof}

The next step is to calculate the integral with $\log(1+\cosh(s))$ in the numerator.  Again, this is not a straightforward application of the residue theorem, but rather, it involves the choice of a clever auxiliary function.  For this reason both the result and its proof may be useful.

 \begin{lemma} \label{le:logmanipulates}
 For $0<c$ not an integer, we have
 \[ \frac{c}{8\pi^2} \sin(\pi c) \int_{-\infty} ^\infty  \frac{ \log (1+\cosh s)}{(1+\cosh s)(\cosh(cs) - \cos(\pi c))} ds \]
 \[ = \frac{1}{2\pi} \sum_{n=1} ^{\lfloor \frac c 2 \rfloor} \frac{\log(1-\cos(2n\pi/c))}{1-\cos(2n\pi/c)}  + \frac{c^2}{\pi} \int_1 ^\infty \frac{e^{c t}}{(e^t-1)(1-e^{c t})^2} dt \]
 \[  + \frac{c^2}{\pi}  \int_0 ^1  \left[\frac{e^{c t}}{(e^t-1)(1-e^{ct})^2} - \left(  \frac{1}{c^2 t^3} - \frac{1}{2 c^2 t^2} + \frac{1}{12 t c^2} - \frac{1}{12 t^2} \right)\right] dt  - \frac{c}{4\pi^2} \left( \frac {\pi}{2c} + \frac{\pi c}{3} +\log 2 \left( \frac{\pi}{6 c} - \frac{\pi c}{6} \right) \right).\]
\end{lemma}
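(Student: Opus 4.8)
The plan is to follow the auxiliary-function strategy of Lemma \ref{le:residuesnolog} as closely as possible, the genuinely new feature being the logarithm in the numerator, which I will organize using the half-angle identity $1+\cosh s = 2\cosh^2(s/2)$, so that
\[ \log(1+\cosh s) = \log 2 + 2\log\cosh(s/2). \]
The $\log 2$ piece multiplies \emph{exactly} the integrand of Lemma \ref{le:residuesnolog}, so that its contribution is $\tfrac12\log 2$ times the right-hand side already computed there (the prefactor here being $\frac{c}{8\pi^2}$ rather than $\frac{c}{4\pi^2}$); this accounts for part of the $\log 2$ dependence of the claimed formula. It remains to treat the piece carrying $2\log\cosh(s/2)$. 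As before I would set
\[ h(s) = \frac{\bigl(\log 2 + 2\log\cosh(s/2)\bigr)\, e^{-cs}}{(1+\cosh s)\bigl(e^{-cs}-e^{i\pi c}\bigr)}, \]
so that on the real axis the integrand equals $h(s)-\overline{h(s)}$, invoke the reflection $h(s-2\pi i)=\overline{h(s)}$ (valid since $\cosh$ has period $2\pi i$, away from the branch point of the logarithm), and reduce the integral to a clockwise contour integral of $h$ over the box $\Gamma_R$ of Figure \ref{fig:contour1}.

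First I would collect the contributions of the simple poles at $s_n=-i(\pi+2n\pi/c)$, $n\in\Z\cap\bigl[(-\tfrac c2,0)\cup(0,\tfrac c2)\bigr]$, exactly as in Lemma \ref{le:residuesnolog}. The only change is that the residue now carries the factor $\log(1+\cosh s_n)$; since $\cosh s_n=-\cos(2n\pi/c)$ this equals $\log(1-\cos(2n\pi/c))$, which is real and positive. Pairing $n$ with $-n$ collapses the symmetric sum to a sum from $1$ to $\lfloor c/2\rfloor$, the doubling from this pairing being compensated by the halved prefactor $\frac{c}{8\pi^2}$, so that one recovers precisely $\frac{1}{2\pi}\sum_{n=1}^{\lfloor c/2\rfloor}\frac{\log(1-\cos(2n\pi/c))}{1-\cos(2n\pi/c)}$, the first term of the statement.

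The main obstacle is the point $s=-i\pi$. There $\cosh(s/2)$ has a simple zero, so $1+\cosh s$ has a double zero (as in Lemma \ref{le:residuesnolog}), but now $\log\cosh(s/2)$ has a logarithmic branch point, and the residue theorem cannot be applied naively. Instead I would cut $\Gamma_R$ along a vertical slit from $-i\pi$ to the lower edge $\R-2\pi i$ and deform the contour around it. The slit contribution is governed by the jump $2\pi i$ of $\log\cosh(s/2)$ across the cut, hence reduces to a contour integral of the meromorphic remainder $\frac{e^{-cs}}{(1+\cosh s)(e^{-cs}-e^{i\pi c})}$. Expanding $\frac{1}{1+\cosh s}=\frac{1}{2\cosh^2(s/2)}$ and $\frac{e^{-cs}}{e^{-cs}-e^{i\pi c}}$ in geometric series and integrating term by term produces the regularized Barnes-type sum $\sum_{n,\ell\ge 1}\frac{\ell}{n+c\ell}$, whose Mellin representation is $\frac{c^2}{\pi}\int_0^\infty\frac{e^{ct}}{(e^t-1)(e^{ct}-1)^2}\,dt$, the very object of Corollary \ref{cor:dc_zetac}. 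This integral diverges at $t=0$, which is exactly why the statement splits it at $t=1$ and subtracts the Laurent polynomial $\frac{1}{c^2t^3}-\frac{1}{2c^2t^2}+\frac{1}{12tc^2}-\frac{1}{12t^2}$; these are the small-$t$ terms of the integrand, playing here the role of the coefficients $b_{-2},b_{-1},b_0$ in Proposition \ref{prop:zeta_c}.

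Finally, the local behaviour of $h$ at the branch point must be analysed to produce the elementary remainder $-\frac{c}{4\pi^2}\bigl(\frac{\pi}{2c}+\frac{\pi c}{3}+\log 2\,(\frac{\pi}{6c}-\frac{\pi c}{6})\bigr)$. Here I would expand $\frac{\log\cosh(s/2)}{\cosh^2(s/2)}$ about $s=-i\pi$, combining the second-order pole with the logarithm to extract its finite part; the $\log 2$ contribution of this expansion combines with the $\log 2$ piece already isolated via Lemma \ref{le:residuesnolog}, while the purely algebraic part yields the constants $\frac{\pi}{2c}$ and $\frac{\pi c}{3}$. I expect this last step to be the delicate heart of the argument: the simultaneous bookkeeping of the divergent Laurent terms absorbed into the regularized $t$-integral and of the finite constants extracted from the logarithmic branch point, together with the (routine) verification that the vertical edges of $\Gamma_R$ and the small circle around $-i\pi$ give no contribution in the limit $R\to\infty$.
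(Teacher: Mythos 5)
Your high-level strategy---an auxiliary function $h$, residues at simple poles producing the trigonometric sum, and a local analysis at the branch point $s=-i\pi$ supplying the elementary constants---matches the paper's, and the splitting $\log(1+\cosh s)=\log 2+2\log\cosh(s/2)$ is a legitimate way to organize the $\log 2$ bookkeeping. But your contour scheme has genuine flaws. First, the reflection identity $h(s-2\pi i)=\overline{h(s)}$ cannot be invoked for any single-valued analytic branch of the logarithm on your box: $1+\cosh s$ has a \emph{double} zero at $s=-i\pi$, so continuing the principal branch from the real axis vertically down to $\Ima s=-2\pi$, one finds $\log(1+\cosh s)\big|_{s=t-2\pi i}=\log(1+\cosh t)\mp 2\pi i$ (sign depending on $\mathrm{sign}\,t$), not $\log(1+\cosh t)$. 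Hence $\int_{\R-2\pi i}h\neq\int_\R\overline{h}$ for the branch you are forced to use inside the contour, and the discrepancy consists of two half-line integrals of the no-log integrand that your accounting omits. Second, and more seriously, your proposed slit from $-i\pi$ to $-2\pi i$ lies on the imaginary axis, which is exactly where \emph{all} poles of $h$ sit: the poles $s_n=-i(\pi+2n\pi/c)$, $n=1,\dots,\lfloor c/2\rfloor$, whose residues you want to collect, lie \emph{on} the slit. The slit integrals of the jump are then divergent as written, and the clean decomposition ``interior residues plus slit jump'' collapses; repairing it requires principal values and half-residue bookkeeping you have not done. Third, your claim that the slit contribution becomes the Mellin-type integral $\frac{c^2}{\pi}\int_0^\infty\frac{e^{ct}}{(e^t-1)(e^{ct}-1)^2}\,dt$ cannot be right: along the vertical slit the exponentials $e^{-cs}$ have modulus one, so there is no exponential decay and no convergent geometric-series expansion there.

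The paper sidesteps all three problems by never leaving the strip $-\pi\le\Ima s\le 0$: the contour is a half-depth box with top edge on $\R$ and bottom edge on $\Ima s=-\pi$, indented by a small semicircle $\Gamma_\eps$ over the branch point, and one recovers the real-axis integral from $I(c)=2i\,\Ima\int_\R h\,ds$. Inside this strip the principal branch of $\log(1+\cosh s)$ is analytic, and only the poles with $1\le n\le\lfloor c/2\rfloor$ are enclosed, giving the trigonometric sum at once. The regularized $t$-integrals then come not from any slit but from the \emph{horizontal} edges $\Gamma_\pm$ along $\Ima s=-\pi$, where $1+\cosh s=1-\cosh t<0$ (the log has argument $+\pi$ on one side of the branch point and $-\pi$ on the other), the integrand decays exponentially, and an integration by parts based on $\frac{1}{1-\cosh t}=-2\frac{d}{dt}\frac{1}{1-e^t}$ produces $\int\frac{e^{ct}}{(e^t-1)(1-e^{ct})^2}\,dt$; the $\eps$-divergences of these edge integrals cancel against those of the $\Gamma_\eps$ expansion, whose finite part yields exactly the constants $\frac{\pi}{2c}+\frac{\pi c}{3}+\log 2\left(\frac{\pi}{6c}-\frac{\pi c}{6}\right)$. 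If you wish to salvage your scheme, you must either displace the slit off the imaginary axis and track the resulting principal values and half-residues, or---far simpler---fold the contour at $\Ima s=-\pi$ as the paper does.
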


\begin{proof}
 To demonstrate the lemma, we compute
 \[ I(c) := i \sin \left( \pi c \right) \int_{-\infty} ^\infty \frac{ \log (1+\cosh s)}{(1+\cosh s)(\cosh(sc) - \cos(\pi c))}  ds.\]
 Similar to our previous calculations, we re-write the expression
 \[ I(c) = \int_{-\infty} ^\infty  \frac{ \log(1+\cosh s)}{(1+\cosh s)} e^{-cs} \left[ \frac{1}{e^{-cs} - e^{i \pi c}}-\frac{1}{e^{-cs} - e^{-i \pi c}} \right] ds. \]
We therefore define 
\[ h(s) = \frac{\log(1+\cosh s)}{1+\cosh s} e^{-sc} \frac{1}{e^{-sc} - e^{i \pi c}}.\]
Then for $s$ real we have
\[ I(c) = \int_\R [ h(s) - \overline{h(s)}] ds = 2 i \Ima \int_{-\infty} ^\infty h(s) ds. \]
We consider the contour integral as depicted in Figure \ref{fig:contourwow}.  The integrand has a branch point of the logarithm at $s=-i\pi$, hence the contour's avoidance of that point.  The integrand has simple poles at points $s_n$ with
\[ s_n = -i \pi + 2 i n \pi/c, \]
such that these points lie within the contour, which requires
\[ -\pi < -\pi + 2n\pi/c < 0 \implies 1 \leq n < \frac c 2.\]

By the residue theorem, since the contour is negatively oriented,
\[ \int_{\Gamma_{R, \epsilon}} h(s) ds = - 2\pi i \sum_{n=1} ^{\lfloor \frac c2 \rfloor}  \Res(h; -i\pi + 2 in \pi/c).\]
The residues at these simple poles are computed as above, and are equal to
\[ - \frac { \log(1-\cos(2n\pi/c))}{c (1-\cos(2n\pi/c))}.\]
We therefore obtain that
\[  \int_{\Gamma_{R, \epsilon}} h(s) ds = 2 i \sum_{n=1} ^{\lfloor \frac c2 \rfloor} \frac{\pi  \log(1-\cos(2n\pi/c))}{c(1-\cos(2n\pi/c))}.\]
On the other hand
\[ I(c) = 2 i \Ima \left( 2 i \sum_{n=1} ^{\lfloor \frac c 2 \rfloor}  \frac{\pi \log(1-\cos(2n\pi/c))}{c(1-\cos(2n\pi/c))} - \lim_{R \to \infty} \int_{\Gamma_- \cup \Gamma_\eps \cup \Gamma_+} h(s) ds \right)\]
\[ = 4 i \sum_{n=1} ^{\lfloor \frac c 2 \rfloor}  \frac{\pi  \log(1-\cos(2n\pi/c))}{c(1-\cos(2n\pi/c))} - 2 i \Ima \left( \lim_{R \to \infty} \int_{\Gamma_- \cup \Gamma_\eps \cup \Gamma_+} h(s) ds \right).\]

\begin{figure} \centering
\begin{tikzpicture}[decoration={markings,
    mark=at position 1cm   with {\arrow[line width=1pt]{stealth}},
    mark=at position 4.5cm with {\arrow[line width=1pt]{stealth}},
    mark=at position 7cm   with {\arrow[line width=1pt]{stealth}},
    mark=at position 9.5cm with {\arrow[line width=1pt]{stealth}}
  }]
  \draw[thick, ->] (-6,4) -- (6,4) coordinate (xaxis);

  \draw[thick, ->] (0,0) -- (0,6) coordinate (yaxis);

  \node[above] at (xaxis) {$\mathrm{Re}(s)$};

  \node[right]  at (yaxis) {$\mathrm{Im}(s)$};

  \node[right] at (0,1.7) {$-i\pi$};

  \node[left] at (-2, 1.7) {$\Gamma_-$};

    \node[right] at (2, 1.7) {$\Gamma_+$};

  \path[draw,blue, line width=0.8pt, postaction=decorate]
        (-4,4) node[above, black] {$-R$}
    --  (4, 4)   node[above, black] {$R$}
    --  (4, 2)
    --  (0.5,2);

      \path[draw,blue, line width=0.8pt, postaction=decorate]
        (-0.5,2) -- (-4,2) --  (-4,4);
        \coordinate (A) at (0.5,2); \coordinate (B) at (-0.5,2); \coordinate (C) at (0,2);

       \draw pic[draw, -, blue, line width=0.8pt, postaction=decorate, angle radius =0.5cm]{angle=A--C--B};
       \node at (0,2)[circle,fill,inner sep=1.5pt]{};
       \node[right] at (0,2.7) {$\Gamma_\epsilon$};

\end{tikzpicture}
\caption{The contour of integration, $\Gamma_{R, \epsilon}$.} \label{fig:contourwow}
\end{figure}

If we parametrize the integral over $\Gamma_-$ by $s=-i\pi + t$ for $t \in [-R, -\eps]$ then we have that
\[ h(s) = \frac{\log(1-\cosh t) e^{-c t} e^{i \pi c}} {(1-\cosh t)(e^{-ct} e^{i\pi c} - e^{i\pi c})}
= \frac{ \log(1-\cosh t) e^{-c t}}{(1-\cosh t)(e^{-ct} - 1)}.\]
Since $t \in [-R, -\eps]$ with $\eps > 0$, $\cosh t > 1$, and therefore the argument of the complex logarithm is $\pi$.  Consequently,
\[ s \in \Gamma_- \implies \Ima h(s) = \frac{\pi  e^{-c t}}{(1-\cosh t)(e^{-ct} - 1)}.\]
We therefore have, recalling the orientation
\[ \lim_{R \to \infty} \Ima \int_{\Gamma_-} h(s) ds = \pi \int_{-\eps} ^{-\infty} \frac{ e^{-ct}}{(1-\cosh t)(e^{-ct} - 1)} dt = - \pi \int_{\eps} ^\infty \frac{ e^{c t}}{(1-\cosh t)(e^{c t} - 1)} dt.\]
For $\Gamma_+$ the argument of the complex logarithm should be taken to equal $-\pi$, because this is on the opposite side of the branch cut for the logarithm.
Similarly, we have
\[ \lim_{R \to \infty} \Ima \int_{\Gamma_+} h(s) ds = - \pi \int_{\infty} ^\eps \frac{ e^{-c t}}{(1-\cosh t)(e^{-c t} - 1)} dt = \pi \int_\eps ^\infty  \frac{ e^{-c t}}{(1-\cosh t)(e^{-ct} - 1)} dt. \]
Consequently, we obtain that
\[ \lim_{R \to \infty} \Ima \left( \int_{\Gamma_-} h(s) ds + \int_{\Gamma_+} h(s) ds \right) =  \pi \int_{\eps} ^\infty \frac{1+e^{ct}}{(1-\cosh t)(1-e^{c t})} dt. \]

Note that
\[ \frac{1}{1-\cosh t} = - \frac{2 e^t}{(1-e^t)^2} = -2 \frac{d}{dt} \frac{1}{1-e^t}.\]
This is useful for partial integration
\[  \pi \int_{\eps} ^\infty \frac{1+e^{ct}}{(1-\cosh t)(1-e^{ct} )} dt = -2\pi \int_\eps ^\infty  \frac{1+e^{ct}}{(1-e^{ct})} \frac{d}{dt} \frac{1}{1-e^t}  dt \]
\[ = - 2 \pi \left\{ \left . \frac{1+e^{ct}}{1-e^{ct}} \frac{1}{1-e^t}\right|_{\eps} ^\infty - \int_\eps ^\infty  \frac{1}{1-e^t} \frac{d}{dt} \frac{1+e^{c t}}{1-e^{ct}} dt \right\}  = 2 \pi \frac{1+e^{c\eps}}{1-e^{c\eps}} \frac{1}{1-e^\eps} - 4\pi c \int_\eps ^\infty  \frac{e^{ct}}{(e^t-1)(1-e^{c t})^2} dt .\]
We would like to send $\eps \searrow 0$ and combine with the integral around $\Gamma_\eps$.  For this reason we add and subtract the small $t$ asymptotic behavior.  Consequently we observe that
\[ \frac{e^{ct}}{(e^t - 1)(e^{ct} - 1)^2} \approx \frac{1}{c^2 t^3} - \frac{1}{2 c^2 t^2} +  \frac{1}{12t} \left( \frac{1}{c^2} - 1 \right) + \cO(t^0). \]
The adding-subtracting game is only required in the integral from $\eps$ to $1$.  Consequently, we have
\[ \lim_{R \to \infty} \Ima \left( \int_{\Gamma_-} h(s) ds + \int_{\Gamma_+} h(s) ds \right) = 2 \pi \frac{1+e^{c \eps}}{1-e^{c\eps}} \frac{1}{1-e^\eps} - 4\pi c  \int_1 ^\infty  \frac{e^{c t }}{(e^t-1)(1-e^{c t})^2}dt \]
\[ - 4\pi c \int_\eps ^1  \left[\frac{e^{ct }}{(e^t-1)(1-e^{ct })^2} - \left( \frac{1}{c^2 t^3} - \frac{1}{2 c^2 t^2} +  \frac{1}{12t} \left( \frac{1}{c^2} - 1 \right) \right) \right] dt \]
\[ - 4 \pi c \int_\eps ^1  \left( \frac{1}{c^2 t^3} - \frac{1}{2 c^2 t^2} +  \frac{1}{12t} \left( \frac{1}{c^2} - 1 \right) \right) dt.\]
The last term we evaluate directly
\begin{multline*} - 4\pi c \left . \left( - \frac{1}{2 c^2 t^2} + \frac{1}{2 c^2 t} + \frac{1}{12} \left( \frac{1}{c^2} - 1 \right)   \log t \right) \right|_{\eps} ^1 \\
 = - 4 \pi c \left( - \frac{1}{2 c^2} + \frac{1}{2c^2} + \frac{1}{2c^2 \eps^2} - \frac{1}{2 c^2 \eps } - \frac{1}{12} \left( \frac{1}{c^2} - 1 \right) \log \eps \right) \\ 
 = - 4\pi c  \left( \frac{1}{2 c^2 \eps^2} - \frac{1}{2 c^2 \eps } -  \frac{1}{12} \left( \frac{1}{c^2} - 1 \right)  \log \eps\right) = - \frac{2\pi}{c\eps^2} + \frac{2 \pi }{c\eps} + \frac{\pi c}{3} \left( \frac{1}{c^2} - 1 \right) \log \eps.
 \end{multline*}
We therefore have computed  
\[ \lim_{R \to \infty} \Ima \left( \int_{\Gamma_-} h(s) ds + \int_{\Gamma_+} h(s) ds \right) = - 4\pi c \int_1 ^\infty  \frac{e^{c t}}{(e^t-1)(1-e^{ct})^2}\ dt \]
\[ - 4\pi c \int_\eps ^1  \left[\frac{e^{ct}}{(e^t-1)(1-e^{ct})^2} - \left(  \frac{1}{c^2 t^3} - \frac{1}{2 c^2 t^2} + \frac{1}{12t} \left( \frac{1}{c^2} - 1 \right)  \right)\right] \ dt\] 
\[ +  2 \pi \frac{1+e^{c \eps}}{1-e^{c \eps}} \frac{1}{1-e^\eps} - \frac{2\pi}{c\eps^2} + \frac{2 \pi }{c\eps} + \frac{\pi c}{3} \left( \frac{1}{c^2} - 1 \right) \log \eps.\]
We compute the Laurent series expansion near $\eps = 0$
\[ 2 \pi \frac{1+e^{c \eps}}{1-e^{c \eps}} \frac{1}{1-e^\eps} \sim \frac{4\pi}{c\eps^2} - \frac{2\pi}{c\eps} + \frac{\pi}{3c} + \frac{\pi c}{3} + \cO (\eps). \]
Consequently
\[ \lim_{R \to \infty} \Ima \left( \int_{\Gamma_-} h(s) ds + \int_{\Gamma_+} h(s) ds \right) = - 4\pi c  \int_1 ^\infty  \frac{e^{c t}}{(e^t-1)(1-e^{c t})^2} dt \]
\[ - 4\pi c \int_\eps ^1  \left[\frac{e^{c t}}{(e^t-1)(1-e^{ct})^2} - \left(    \frac{1}{c^2 t^3} - \frac{1}{2 c^2 t^2} + \frac{1}{12t} \left( \frac{1}{c^2} - 1 \right) \right) \right] dt \]
\[ + \frac{2\pi }{c\eps^2} + \frac{\pi}{3c} + \frac{\pi c}{3} + \left( \frac{\pi}{3c} - \frac{\pi c}{3} \right) \log \eps + \cO(\eps). \]

Finally, it remains to compute the integral over $\Gamma_\eps$.  For this calculation, we require the expansion of $h(s)$ for $s=-i\pi + z$ for $|z|$ small.  Note that
\[ 1+\cosh s = - \frac 1 2 z^2 \left( 1 + \frac{z^2}{12} + \mathcal O (z^4) \right), \quad s = - i\pi + z, \quad |z| \textrm{ small,}\]
and therefore
\[ \log(1+\cosh s) = \log \left( - \frac 1 2 z^2 \right) + \frac{1}{12} z^2 + \mathcal O (z^4), \quad |z| \textrm{ small.}\]
Along $\Gamma_\eps$, $z = \eps e^{i\theta}$, for $\theta \in [0,\pi]$.  Consequently,
\[ \log (1+\cosh s) = \log \left( \frac{\eps^2}{2} \right) + i (2\theta - \pi) + \frac{1}{12} \eps^2 e^{2i\theta} + \mathcal O (\eps^4).\]
We further use the expansion
\[ \frac{e^{- c s }}{(1+\cosh s)(e^{-c s} - e^{i \pi c})} = \frac{2}{c z^3} - \frac{1}{z^2} + \frac1 z \frac 1 6 \left( c - \frac 1 c \right) + \mathcal O (z^0).\]
With the parametrization $s=-i\pi + \eps e^{i\theta}$ we have that $ds = i \eps e^{i\theta} d\theta$.  Consequently we require the terms in the expansion up to order $\eps^{-1}$.  We have
\[ \frac{ \log(1+\cosh s) e^{-c s }}{(1+\cosh s)(e^{-c s} - e^{i\pi c} )} = \log \left( \frac{\eps^2}{2} \right) \left( \frac{2}{c \eps^3 e^{3i\theta}} - \frac{1}{\eps^2 e^{2 i \theta}} + \frac{1}{6 \eps e^{i\theta}} \left( c - \frac 1 c \right) \right) \]
\[ + i (2\theta - \pi ) \left( \frac{2}{c \eps^3 e^{3i\theta}} - \frac{1}{\eps^2 e^{2 i \theta}} + \frac{1}{6 \eps e^{i\theta}} \left( c - \frac 1 c \right) \right) + \frac{2}{12 c \eps e^{i\theta}} + \mathcal O (\log \eps).\]
We shall now compute the imaginary parts of the relevant integrals along $\Gamma_\eps$:
\[\Ima \int_0 ^\pi  \log \left( \frac{\eps^2}{2} \right) \left( \frac{2}{c \eps^3 e^{3i\theta}} - \frac{1}{\eps^2 e^{2 i \theta}} +
\frac{1}{6 \eps e^{i\theta}} \left( c - \frac 1 c \right) \right) i \eps e^{i\theta} d\theta \]
\[ = \Ima \int_0 ^\pi i \log \left( \frac{\eps^2}{2} \right) \left( \frac{2}{c \eps^2} e^{-2i\theta} - \frac 1 \eps e^{-i\theta} + \frac 1 6 \left(c- \frac 1 c \right) \right) d\theta \] 
\[= \int_0 ^\pi \log \left( \frac{\eps^2}{2} \right) \left( \frac{2}{c \eps^2} \cos(2\theta) - \frac 1 \eps \cos(\theta) + \frac 1 6 \left( c - \frac 1 c \right)  \right) d\theta = \pi \log \left( \frac{\eps^2}{2} \right)  \frac 1 6 \left( c - \frac 1 c \right),\]
\[ \Ima \int_0 ^\pi  i (2\theta - \pi ) \left( \frac{2}{c \eps^3 e^{3i\theta}} - \frac{1}{\eps^2 e^{2 i \theta}} + \frac{1}{6 \eps e^{i\theta}} \left( c - \frac 1 c \right) \right) i \eps e^{i \theta} d\theta \]
\[ = \Ima \int_0 ^\pi (\pi - 2\theta) \left( \frac{2}{c \eps^2} e^{-2i\theta} - \frac 1 \eps e^{-i\theta} + \frac 1 6 \left(c - \frac 1 c \right) \right) d\theta \]
\[ = \int_0 ^\pi (\pi - 2\theta)  \left( \frac{2}{c \eps^2} \sin(-2\theta) -  \frac 1 \eps \sin(-\theta) \right) d\theta = - \frac{2\pi}{c\eps^2},\]
\[ \Ima \int_0 ^\pi \frac{2}{12 c \eps e^{i\theta}} i \eps e^{i\theta} d\theta = \frac {\pi}{6c},\]
\[  \int_0 ^\pi i \eps e^{i\theta} \mathcal O (\log \eps) d\theta = \mathcal O (\eps \log \eps), \quad \eps \searrow 0.\]
In summary, we have computed
\[ \Ima \int_{\Gamma_\eps} h(s) ds = \pi \log \left( \frac{\eps^2}{2} \right)  \frac 1 6 \left( c - \frac 1 c \right) - \frac{2\pi}{c\eps^2} + \frac {\pi}{6c} + \mathcal O (\eps \log \eps) \]
\[= \log (\eps) \left( \frac{\pi c}{3} - \frac{\pi}{3c} \right) - \pi \log 2 \frac 1 6 \left( c - \frac 1 c \right) - \frac{2\pi}{c\eps^2} + \frac{\pi}{6c} + \mathcal O (\eps \log \eps).\]
We combine this with the calculation of the integrals along $\Gamma_\pm$,
\[ \lim_{R \to \infty} \Ima \left( \int_{\Gamma_-} h(s) ds + \int_{\Gamma_+} h(s) ds \right) + \Ima \int_{\Gamma_\eps} h(s) ds = - 4\pi c \int_1 ^\infty  \frac{e^{c t }}{(e^t-1)(1-e^{c t})^2} dt \]
\[ - 4\pi c \int_\eps ^1  \left[\frac{e^{c t}}{(e^t-1)(1-e^{ct})^2} - \left(  \frac{1}{c^2 t^3} - \frac{1}{2 c^2 t^2} + \frac{1}{12 t} \left( \frac{1}{c^2} - 1 \right) \right)\right]dt  \]
\[ + \frac{2\pi}{c\eps^2} + \frac{\pi}{3c} + \frac{\pi c}{3} + \left( \frac{\pi}{3c} - \frac{\pi c}{3} \right) \log \eps \]

\[ + \log (\eps) \left( \frac{\pi}{3c} - \frac{\pi c}{3} \right)   - \pi \log 2 \frac 1 6 \left( c - \frac 1 c \right) - \frac{2\pi}{c \eps^2} + \frac{\pi}{6c} + \mathcal O (\eps \log \eps),\]
resulting in
\[ - 4\pi c \int_1 ^\infty  \frac{e^{ct}}{(e^t-1)(1-e^{ct})^2} dt  - 4\pi c \int_0 ^1  \left[\frac{e^{ct}}{(e^t-1)(1-e^{ct})^2} - \left(  \frac{1}{c^2 t^3} - \frac{1}{2 c^2 t^2} + \frac{1}{12 t} \left( \frac{1}{c^2} - 1 \right)\right)\right] dt \]
\[ + \frac {\pi}{ 2c} + \frac{\pi c}{3} +\log 2 \left( \frac{\pi}{6c} - \frac{\pi c}{6} \right)  \textrm{ as } \eps \to 0.\]

We have therefore computed
\[ I(c) = i \sin \left( \pi c \right) \int_{-\infty} ^\infty  \frac{ \log (1+\cosh s)}{(1+\cosh s)(\cosh(cs) - \cos(\pi c))} ds \] 
\[= 4 i \sum_{n=1} ^{\lfloor \frac{c}{2} \rfloor}  \frac{\alpha  \log(1-\cos(2n\pi/c))}{1-\cos(2n\pi/c)}  - 2 i \left(  - 4\pi c \int_1 ^\infty  \frac{e^{ct }}{(e^t-1)(1-e^{c t })^2} dt \right) \] 
\[ - 2 i \left( - 4\pi c \int_0 ^1  \left[\frac{e^{c t}}{(e^t-1)(1-e^{ct})^2} - \left(  \frac{1}{c^2 t^3} - \frac{1}{2 c^2 t^2} + \frac{1}{12t} \left( \frac{1}{c^2} - 1 \right)\right)\right] dt \right)  - 2 i \left( \frac {\pi}{2c} + \frac{\pi c}{3} +\log 2 \left(\frac{\pi}{6c} - \frac{\pi c}{6} \right)  \right).\]
Consequently, we obtain that
\[ \sin(\pi c) \int_{-\infty} ^\infty  \frac{ \log (1+\cosh s)}{(1+\cosh s)(\cosh(c s) - \cos(\pi c ))} ds = 4 \frac \pi c \sum_{n=1} ^{\lfloor \frac{c}{2} \rfloor}  \frac{\log(1-\cos(2n\pi/c))}{1-\cos(2n\pi/c)}  + 8 \pi c \int_1 ^\infty  \frac{e^{ct}}{(e^t-1)(1-e^{c t})^2} dt \]
\[ + 8 \pi c \int_0 ^1  \left[\frac{e^{ct}}{(e^t-1)(1-e^{ct})^2} - \left(  \frac{1}{c^2 t^3} - \frac{1}{2 c^2 t^2} + \frac{1}{12 t} \left( \frac{1}{c^2} - 1 \right)\right)\right] dt  - 2\left( \frac {\pi}{2c} + \frac{\pi c}{3} +\log 2 \left( \frac{\pi}{6c} - \frac{\pi c}{6} \right) \right),\]
and therefore
\[ \frac{c}{8\pi^2} \sin(\pi c) \int_{-\infty} ^\infty  \frac{ \log (1+\cosh s)}{(1+\cosh s)(\cosh(cs) - \cos(\pi c))} ds = \frac{1}{2\pi}\sum_{n=1} ^{\lfloor \frac{c}{2} \rfloor}  \frac{\log(1-\cos(2n\pi/c))}{1-\cos(2n\pi/c)} \]
\[ + \frac{c^2}{\pi} \int_1 ^\infty  \frac{e^{ct}}{(e^t-1)(1-e^{ct})^2}dt + \frac{c^2}{\pi}  \int_0 ^1  \left[\frac{e^{ct}}{(e^t-1)(1-e^{ct})^2} - \left(  \frac{1}{c^2 t^3} - \frac{1}{2 c^2 t^2} + \frac{1}{12 t c^2} - \frac{1}{12 t}\right)\right]  dt\]
\[ - \frac{c}{4\pi^2} \left( \frac {\pi}{ 2 c} + \frac{\pi c}{3} +\log 2  \left( \frac{\pi}{6c} - \frac{\pi c}{6} \right) \right).\]
\end{proof}

%%%%%%%%%%%%%%%%%%%%%%%%%%%%%%%%%%%%%%%%
Now, we can complete the proof of Theorem \ref{th:main2} in the case when $c$ is not a natural number. 
\begin{proof}[Proof of \eqref{eq:th2_eq2} in Theorem \ref{th:main2}]
Proposition \ref{prop:v2} gives the variation as
\[  \frac{d}{dc} \xi_c '(0) = - \frac{\pi}{2c^2} \left( \frac{1}{3\pi} + \frac{c^2}{12 \pi} +  \sum_{k = \lceil{-\frac c 2 \rceil}, k \neq 0 } ^{\lceil \frac c 2 - 1 \rceil} \frac{-2\gamma_e + \log 2 - \log\left({1-\cos(2k\pi/c)}\right) }{4 \pi (1-\cos(2k\pi/c))} \right) \]
\[ - \frac{\pi}{2c^2} \left( \frac{2c}{\pi} \sin(\pi c) \int_\R \frac{ - \log 2 + 2 \gamma_e + \log(1+\cosh(s))}{16 \pi (1+\cosh(s)) (\cosh(c s) - \cos(c \pi))} ds \right) \]
\[ = - \frac{1}{6c^2} - \frac{1}{24} + \frac{1}{8c^2} \sum_{k = \lceil{-\frac c 2 \rceil}, k \neq 0 } ^{\lceil \frac c 2 - 1 \rceil} \frac{2\gamma_e - \log 2 + \log\left(1-\cos(2k\pi/c)\right) }{4 \pi (1-\cos(2k\pi/c))} \] 
\[ + \frac{1}{16 \pi c} \sin(\pi c) \int_\R \frac{\log 2 - 2 \gamma_e - \log(1+\cosh(s))}{(1+\cosh(s))(\cosh(cs)-\cos(\pi c))} ds. \]
By Lemma \ref{le:residuesnolog},
\beq  & \sin(\pi c) \frac{\log 2 - 2 \gamma_e}{16 \pi c} \int_\R \frac{ds}{(1+\cosh(s))(\cosh(cs) - \cos(\pi c))}   = \frac{4\pi^2}{c} \frac{\log 2 - 2 \gamma_e}{16 \pi c} \frac{1}{12} \left( \frac 1 \pi - \frac{c^2}{\pi} \right) \nn \\ 
& +  \frac{4\pi^2}{c} \frac{\log 2 - 2 \gamma_e}{16 \pi c} \frac{1}{2\pi} \sum_{n= \lceil - \frac c 2 \rceil, n \neq 0} ^{\lfloor \frac c 2 \rfloor} \frac{1}{1-\cos(2n\pi/c)} \nn \\ 
& =  \frac{ \log 2 - 2 \gamma_e}{48c^2} \left(1-c^2 \right) + \frac{ \log 2 - 2 \gamma_e}{8c^2} \sum_{n= \lceil - \frac c 2 \rceil, n \neq 0} ^{\lfloor \frac c 2 \rfloor} \frac{1}{1-\cos(2n\pi/c)}. \label{eq:sinpic_term} \eeq 
Since cosine is an even function, and $ \lfloor \frac c 2 \rfloor = \lceil \frac c 2 - 1 \rceil $
\eqref{eq:sinpic_term}  becomes
\beq \frac{ \log 2 - 2 \gamma_e}{48c^2} \left(1-c^2 \right) + \frac{ \log 2 - 2 \gamma_e}{4c^2} \sum_{n=1} ^{\lceil \frac c 2 - 1 \rceil} \frac{1}{1-\cos(2n\pi/c)}. \label{eq:sinpic_2} \eeq 
Moreover, we have the trigonometric identity $ 1 - \cos(2 n \pi/c) = 2 \sin^2(n \pi/c)$, 
so \eqref{eq:sinpic_2} is equal to 
\beq \frac{ \log 2 - 2 \gamma_e}{48c^2} \left(1-c^2 \right) + \frac{ \log 2 - 2 \gamma_e}{8c^2} \sum_{k=1} ^{\lceil \frac c 2 - 1 \rceil} \frac{1}{\sin^2(k \pi/c)}. \label{eq:sinpic_3} \eeq 

By Proposition \ref{prop:reformulate_sum_w_alpha},
\[  \frac{1}{8c^2} \sum_{k = \lceil{-\frac c 2 \rceil}, k \neq 0 } ^{\lceil \frac c 2 - 1 \rceil} \frac{2\gamma_e - \log 2 + \log\left(1-\cos(2k\pi/c)\right) }{4 \pi (1-\cos(2k\pi/c))} =  \frac{1}{4c^2} \sum_{k=1} ^{\lceil \frac c 2 - 1 \rceil} \frac{\gamma_e + \log|\sin(k\pi/c)|}{\sin^2(k\pi/c)}. \]
Consequently, the variational formula in Proposition \ref{prop:v2} is
\[ - \frac{1}{6c^2} - \frac{1}{24} + \frac{1}{4c^2} \sum_{k=1} ^{\lceil \frac c 2 - 1 \rceil} \frac{\gamma_e + \log|\sin(k\pi/c)|}{\sin^2(k\pi/c)} + \frac{ \log 2 - 2 \gamma_e}{48c^2} \left(1-c^2 \right) + \frac{ \log 2 - 2 \gamma_e}{8c^2} \sum_{k=1} ^{\lceil \frac c 2 - 1 \rceil} \frac{1}{\sin^2(k \pi/c)} \]
\[  - \frac{\sin(\pi c)}{16\pi c} \int_\R \frac{ \log(1+\cosh(s))}{(1+\cosh(s))(\cosh(cs)-\cos(\pi c))} ds  =   - \frac{1}{6c^2} - \frac{1}{24} + \frac{1}{4c^2} \sum_{k=1} ^{\lceil \frac c 2 - 1 \rceil} \frac{ \log|\sin(k\pi/c)|}{\sin^2(k\pi/c)} + \frac{\log 2 - 2 \gamma_e}{48c^2} (1-c^2) \]
\[ + \frac{\log 2}{8 c^2} \sum_{k=1} ^{\lceil \frac c 2 - 1 \rceil} \frac{1}{\sin^2(k\pi/c)}  - \frac{\sin(\pi c)}{16\pi c} \int_\R \frac{ \log(1+\cosh(s))}{(1+\cosh(s))(\cosh(cs)-\cos(\pi c))} ds.\]
The last term is by Lemma \ref{le:logmanipulates}
 \[ -  \frac{\pi}{2c^2}   \frac{c}{8\pi^2} \sin(\pi c) \int_{-\infty} ^\infty  \frac{ \log (1+\cosh s)}{(1+\cosh s)(\cosh(cs) - \cos(\pi c))} ds \]
 \[ = - \frac{\pi}{2c^2} \frac{1}{2\pi}  \sum_{n=1} ^{\lfloor \frac c 2 \rfloor}  \frac{\log(1-\cos(2n\pi/c))}{1-\cos(2n\pi/c)}  - \frac{\pi}{2c^2} \frac{c^2}{\pi} \int_1 ^\infty \frac{e^{c t}}{(e^t-1)(1-e^{c t})^2} dt \]
 \[  - \frac{\pi}{2c^2} \frac{c^2}{\pi}  \int_0 ^1  \left[\frac{e^{c t}}{(e^t-1)(1-e^{ct})^2} - \left(  \frac{1}{c^2 t^3} - \frac{1}{2 c^2 t^2} + \frac{1}{12 t c^2} - \frac{1}{12 t} \right)\right] dt \]
\[ + \frac{\pi}{2c^2} \left[ \frac{c}{4\pi^2} \left( \frac {\pi}{2c} + \frac{\pi c}{3} +\log 2 \left( \frac{\pi}{6 c} - \frac{\pi c}{6} \right) \right) \right]\]
\[ = - \frac{1}{4c^2} \sum_{n=1} ^{ \lceil \frac c 2 - 1 \rceil} \frac{ \log(2 \sin^2 (n \pi /c))}{2 \sin^2(n \pi /c)} - \frac 1 2 \int_1 ^\infty \frac{e^{ct}}{(e^t -1)(1-e^{ct})^2} dt \]
\[ - \frac 1 2 \int_0 ^1  \left[\frac{e^{c t}}{(e^t-1)(1-e^{ct})^2} - \left(  \frac{1}{c^2 t^3} - \frac{1}{2 c^2 t^2} + \frac{1}{12 t c^2} - \frac{1}{12 t} \right)\right] dt \]
\[ + \frac{1}{8\pi c} \left[ \frac{\pi}{2c} + \frac{\pi c}{3} + \log 2 \left( \frac{\pi}{6c} - \frac{\pi c}{6} \right) \right] \]
\[ = - \frac{1}{8c^2} \sum_{n=1} ^{ \lceil \frac c 2 - 1 \rceil} \frac{ \log2 + 2 \log| \sin (n \pi /c))|}{ \sin^2(n \pi /c)}  - \frac 1 2 \int_1 ^\infty \frac{e^{ct}}{(e^t -1)(1-e^{ct})^2} dt \]
\[ - \frac 1 2 \int_0 ^1  \left[\frac{e^{c t}}{(e^t-1)(1-e^{ct})^2} - \left(  \frac{1}{c^2 t^3} - \frac{1}{2 c^2 t^2} + \frac{1}{12 t c^2} - \frac{1}{12 t} \right)\right] dt \]
\[ + \frac{1}{16c^2} + \frac{1}{24} + \log 2 \left( \frac{1}{48c^2} - \frac{1}{48} \right). \]
We therefore obtain in total
\[  - \frac{1}{6c^2} - \frac{1}{24} + \frac{1}{4c^2} \sum_{k=1} ^{\lceil \frac c 2 - 1 \rceil} \frac{ \log|\sin(k\pi/c)|}{\sin^2(k\pi/c)} + \frac{\log 2 - 2 \gamma_e}{48c^2} (1-c^2)  + \frac{\log 2}{8 c^2} \sum_{k=1} ^{\lceil \frac c 2 - 1 \rceil} \frac{1}{\sin^2(k\pi/c)} \]
\[- \frac{1}{8c^2} \sum_{n=1} ^{ \lceil \frac c 2 - 1 \rceil} \frac{ \log2 + 2 \log| \sin (n \pi /c))|}{ \sin^2(n \pi /c)}  - \frac 1 2 \int_1 ^\infty \frac{e^{ct}}{(e^t -1)(1-e^{ct})^2} dt \]
\[ - \frac 1 2 \int_0 ^1  \left[\frac{e^{c t}}{(e^t-1)(1-e^{ct})^2} - \left(  \frac{1}{c^2 t^3} - \frac{1}{2 c^2 t^2} + \frac{1}{12 t c^2} - \frac{1}{12 t} \right)\right] dt \]
\[ + \frac{1}{16c^2} + \frac{1}{24} + \log 2 \left( \frac{1}{48c^2} - \frac{1}{48} \right) \]
\[ = - \frac{1}{6c^2} + \frac{1}{16c^2} + \frac{\log 2}{48c^2} (1-c^2) - \frac{\gamma_e}{24c^2} (1-c^2) + \frac{\log 2}{48c^2} (1-c^2)\]
\[ - \frac 1 2 \int_1 ^\infty \frac{e^{ct}}{(e^t -1)(1-e^{ct})^2} dt \]
\[ - \frac 1 2 \int_0 ^1  \left[\frac{e^{c t}}{(e^t-1)(1-e^{ct})^2} - \left(  \frac{1}{c^2 t^3} - \frac{1}{2 c^2 t^2} + \frac{1}{12 t c^2} - \frac{1}{12 t} \right)\right] dt \]
\[ = - \frac{5}{48c^2} + \frac{ \log 2}{24 c^2} (1-c^2) + \frac{\gamma_e}{24c^2} (c^2-1)  - \frac 1 2 \int_1 ^\infty \frac{e^{ct}}{(e^t -1)(1-e^{ct})^2} dt \]
\[ - \frac 1 2 \int_0 ^1  \left[\frac{e^{c t}}{(e^t-1)(1-e^{ct})^2} - \left(  \frac{1}{c^2 t^3} - \frac{1}{2 c^2 t^2} + \frac{1}{12 t c^2} - \frac{1}{12 t} \right)\right] dt. \]
We compare this to the formulation in Proposition \ref{prop:xic}
\beq \frac{d}{dc} \left(\xi_c '(0)\right) &=& \frac 1 2 \int_1 ^\infty \frac 1 t \frac{1}{e^t - 1} \frac{-t e^{ct}}{(e^{ct} -1)^2} dt \nn\\
& &+ \frac 1 2  \int_0 ^1 \frac 1 t \left( \frac{-t e^{ct}}{(e^t -1)(e^{ct}-1)^2} + \frac{1}{c^2 t^2} - \frac{1}{2c^2 t} - \frac 1 {12} + \frac{1}{12 c^2} \right) dt  \nn\\
& & + \frac 1 2 \gamma_e \left( \frac{1}{12} - \frac{1}{12c^2} \right) - \frac{5}{48c^2} - \frac{1}{24} \log 2 \left( 1 - \frac 1 {c^2} \right) \nn \eeq
\[ = - \frac{5}{48c^2} + \frac{\log 2}{24c^2} (1-c^2) + \frac{\gamma_e}{24c^2}(c^2-1)  - \frac 1 2 \int_1 ^\infty  \frac{1}{e^t - 1} \frac{e^{ct}}{(e^{ct} -1)^2} dt \]
\[ - \frac 1 2  \int_0 ^1  \frac{ e^{ct}}{(e^t -1)(e^{ct}-1)^2} - \left( \frac{1}{c^2 t^3} - \frac{1}{2c^2 t^2} + \frac{1}{12 c^2t} -  \frac 1 {12t} \right) dt. \]
\end{proof}

\section{Concluding remarks}  \label{s:conclude}
We have calculated the variation of Barnes and Bessel zeta functions with respect to the essential parameter in their definitions.  It may be useful for computing further derivatives of higher dimensional zeta functions with potential applications in number theory, geometric analysis, and physics.  Moreover, our calculations revealed several identities involving both elementary and special functions.  These identities may be of independent interest or application.

\begin{bibdiv}
\begin{biblist}

\bib{actben}{article}{
author={Actor, A.},
author={Bender, I.},
title={The zeta function constructed from the zeroes of the Bessel function},
journal={J. Phys. A:  Math. Gen.},
volume={29},
date={1996},
pages={6555--6580}}

\bib{akr_pre}{misc}{
author={Aldana, C. L.},
author={Kirsten, K.},
author={Rowlett, J.},
title={Polyakov formulas for conical singularities in two dimensions},
date={2020},
url={https://arxiv.org/abs/2010.02776}
}

\bib{AldRow}{article}{
author={Aldana, C. L.},
author={Rowlett, J.},
title={A {P}olyakov formula for sectors},
journal={J. Geom. Anal.},
volume={28},
number={2},
date={2018},
pages={1773--1839}}

\bib{AldRowE}{article}{
author={Aldana, C. L.},
author={Rowlett, J.},
title={Correction to: {A} {P}olyakov {F}ormula for {S}ectors},
journal={J. Geom. Anal.},
volume={30},
number={3},
date={2020},
pages={3371--3372}}

\bib{barnes}{article}{
   author={Barnes, E. W.},
   title={The Genesis of the Double Gamma Functions},
   journal={Proc. Lond. Math. Soc.},
   volume={31},
   date={1899},
   pages={358--381},
   issn={0024-6115},
   review={\MR{1576719}},
   doi={10.1112/plms/s1-31.1.358},
}

\bib{bdk_96}{article}{
   author={Bordag, Michael},
   author={Kirsten, Klaus},
   author={Dowker, Stuart},
   title={Heat-kernels and functional determinants on the generalized cone},
   journal={Comm. Math. Phys.},
   volume={182},
   date={1996},
   number={2},
   pages={371--393},
   issn={0010-3616},
   review={\MR{1447298}},
}

\bib{dlvp1896}{article}{
   author={de la Vall\'{e}e Poussin, Ch.-J.},
   title={Sur la fonction $\zeta(s)$ de Riemann et le nombre des nombres
   premiers
 inf\'{e}rieurs \`a une limite donn\'{e}e.},
   language={French},
   conference={
      title={Colloque sur la Th\'{e}orie des Nombres, Bruxelles, 1955,},
   },
   book={
      publisher={, },
   },
   date={1956},
   pages={9--66},
   review={\MR{0079602}},
}

\bib{folland}{book}{
   author={Folland, Gerald B.},
   title={Fourier analysis and its applications},
   series={The Wadsworth \& Brooks/Cole Mathematics Series},
   publisher={Wadsworth \& Brooks/Cole Advanced Books \& Software, Pacific
   Grove, CA},
   date={1992},
   pages={x+433},
   isbn={0-534-17094-3},
   review={\MR{1145236}},
}

\bib{gilkey04}{book}{
author={Gilkey, Peter B.},
title={Asymptotic formulae in spectral geometry},
series={ Studies in Advanced Mathematics},
publisher={ Chapman \& Hall/CRC, Boca Raton, FL},
date={2004},
pages={ viii+304},
isbn={1-58488-358-8},
review={\MR{2040963}} }

\bib{gilkey95}{book}{
author={Gilkey, Peter B.},
title={Invariance theory, the heat equation, and the Atiyah-Singer index theorem},
edition={Second},
series={ Studies in Advanced Mathematics},
publisher={ CRC Press, Boca Raton, FL},
date={1995},
pages={ x+516},
isbn={0-8493-7874-4},
review={\MR{1396308}} }

\bib{gr}{book}{
   author={Gradshteyn, I. S.},
   author={Ryzhik, I. M.},
   title={Table of integrals, series, and products},
   edition={8},
   edition={7},
   note={Translated from the Russian;
   Translation edited and with a preface by Daniel Zwillinger and Victor
   Moll},
   publisher={Elsevier/Academic Press, Amsterdam},
   date={2015},
   pages={xlvi+1133},
   isbn={978-0-12-384933-5},
   review={\MR{3307944}},
}

\bib{Hadamard1896}{article}{
author = {Hadamard, J.},
journal = {Bulletin de la Société Mathématique de France},
keywords = {Riemann zeta-function; prime number theorem; primes; zeros of },
language = {fre},
pages = {199-220},
publisher = {Société mathématique de France},
title = {Sur la distribution des zéros de la fonction $\zeta (s)$ et ses conséquences arithmétiques},
url = {http://eudml.org/doc/85858},
volume = {24},
year = {1896},
}

\bib{hawk}{thesis}{author={
Hawkins, J.H.},
title={On a zeta-function associated with Bessel's equation},
date={1983},
note={Thesis (Ph.D.)–University of Illinois at Urbana-Champaign.}
}

\bib{hawking77}{article}{
   author={Hawking, S. W.},
   title={Zeta function regularization of path integrals in curved
   spacetime},
   journal={Comm. Math. Phys.},
   volume={55},
   date={1977},
   number={2},
   pages={133--148},
   issn={0010-3616},
   review={\MR{0524257}},
}

\bib{xxz}{article}{
   author={Jimbo, Michio},
   author={Miwa, Tetsuji},
   title={Quantum KZ equation with $|q|=1$ and correlation functions of the
   $XXZ$ model in the gapless regime},
   journal={J. Phys. A},
   volume={29},
   date={1996},
   number={12},
   pages={2923--2958},
   issn={0305-4470},
   review={\MR{1398600}},
   doi={10.1088/0305-4470/29/12/005},
}

\bib{kkbook}{book}{
   author={Kirsten, Klaus},
   title={Spectral functions in mathematics and physics},
   publisher={Chapman \& Hall/CRC, Boca Raton, FL},
   date={[2002] \copyright 2002},
   pages={382},
   isbn={1-58488-259-X},
   review={\MR{4328337}},
}

\bib{lukyanov}{article}{
   author={Lukyanov, Sergei},
   title={Free field representation for massive integrable models},
   journal={Comm. Math. Phys.},
   volume={167},
   date={1995},
   number={1},
   pages={183--226},
   issn={0010-3616},
   review={\MR{1316504}},
}

\bib{OPS1}{article}{
    AUTHOR = {Osgood, Brian and Phillips, Ralph and Sarnak, Peter},
     TITLE = {Extremals of determinants of {L}aplacians},
   JOURNAL = {J. Funct. Anal.},
  FJOURNAL = {Journal of Functional Analysis},
    VOLUME = {80},
      YEAR = {1988},
    NUMBER = {1},
     PAGES = {148--211},
}

\bib{riemann}{article}{
author = {Riemann, B.},
journal = {Monatsberichte der Berliner Akademie},
title = {\"Uber die Anzahl der Primzahlen unter einer gegebenen Gr\"osse},
url = {https://www.maths.tcd.ie/pub/HistMath/People/Riemann/Zeta/},
year = {1859},
}

\bib{Ray-Singer}{article}{
    AUTHOR = {Ray, D. B. and Singer, I. M.},
     TITLE = {{$R$}-torsion and the {L}aplacian on {R}iemannian manifolds},
   JOURNAL = {Advances in Math.},
  FJOURNAL = {Advances in Mathematics},
    VOLUME = {7},
      YEAR = {1971},
     PAGES = {145--210},
}

\bib{rowlett_pjm}{article}{
   author={Rowlett, Julie},
   title={Dynamics of asymptotically hyperbolic manifolds},
   journal={Pacific J. Math.},
   volume={242},
   date={2009},
   number={2},
   pages={377--397},
   issn={0030-8730},
   review={\MR{2546718}},
   doi={10.2140/pjm.2009.242.377},
}

\bib{rowlett_err}{article}{
   author={Rowlett, Julie},
   title={Errata to ``Dynamics of asymptotically hyperbolic manifolds''
   [MR2546718]},
   journal={Pacific J. Math.},
   volume={268},
   date={2014},
   number={2},
   pages={493--506},
   issn={0030-8730},
   review={\MR{3227445}},
   doi={10.2140/pjm.2014.268.493},
}

\bib{rowlett_aif}{article}{
   author={Rowlett, Julie},
   title={On the spectral theory and dynamics of asymptotically hyperbolic
   manifolds},
   language={English, with English and French summaries},
   journal={Ann. Inst. Fourier (Grenoble)},
   volume={60},
   date={2010},
   number={7},
   pages={2461--2492 (2011)},
   issn={0373-0956},
   review={\MR{2849270}},
}

\bib{rsst}{article}{
   author={Rowlett, Julie},
   author={Su\'{a}rez-Serrato, Pablo},
   author={Tapie, Samuel},
   title={Dynamics and zeta functions on conformally compact manifolds},
   journal={Trans. Amer. Math. Soc.},
   volume={367},
   date={2015},
   number={4},
   pages={2459--2486},
   issn={0002-9947},
   review={\MR{3301870}},
   doi={10.1090/S0002-9947-2014-05999-0},
}

\bib{ruelle}{article}{
   author={Ruelle, David},
   title={Zeta-functions for expanding maps and Anosov flows},
   journal={Invent. Math.},
   volume={34},
   date={1976},
   number={3},
   pages={231--242},
   issn={0020-9910},
   review={\MR{0420720}},
   doi={10.1007/BF01403069},
}

\bib{selberg}{article}{
   author={Selberg, A.},
   title={Harmonic analysis and discontinuous groups in weakly symmetric
   Riemannian spaces with applications to Dirichlet series},
   journal={J. Indian Math. Soc. (N.S.)},
   volume={20},
   date={1956},
   pages={47--87},
   issn={0019-5839},
   review={\MR{0088511}},
}

\bib{casimir}{article}{
   author={Sen, Siddhartha},
   title={An explicit formula for the Casimir energy of an arbitrary compact
   smooth surface without boundary},
   journal={J. Math. Phys.},
   volume={25},
   date={1984},
   number={6},
   pages={2000--2006},
   issn={0022-2488},
   review={\MR{0746288}},
   doi={10.1063/1.526394},
}

\bib{smirnov}{book}{
   author={Smirnov, F. A.},
   title={Form factors in completely integrable models of quantum field
   theory},
   series={Advanced Series in Mathematical Physics},
   volume={14},
   publisher={World Scientific Publishing Co., Inc., River Edge, NJ},
   date={1992},
   pages={xiv+208},
   isbn={981-02-0244-X},
   isbn={981-02-0245-8},
   review={\MR{1253319}},
   doi={10.1142/1115},
}

\end{biblist}
\end{bibdiv}

\end{document}